\def\a{\alpha}
\def\b{\beta}
\def\m{\mu}
\def\s{\sigma}
\def\l{\lambda}
\def\D{{\cal D}}
\def\H{{\mathbb H}}
\def\R{\mathbb R}
\def\IK{I\!\!K}
\def\JK{J\!\!K}
\def\F{{\mathbb F}}
\def\H{{\mathbb H}}
\def\argmax{\mbox{argmax}}
\def\conv{\mathop{\rm \longrightarrow}}
\newcommand{\Dconv}{\mbox{$\displaystyle \conv^{\cal D}$}}
\numberwithin{equation}{section}
\theoremstyle{plain}
\newtheorem{thm}{Theorem}[section]
\newtheorem{corollary}{Corollary}[section]
\newtheorem{lemma}{Lemma}[section]
\newtheorem{remark}{Remark}[section]
\begin{document}

\begin{frontmatter}
\title{Smooth and non-smooth estimates of a monotone hazard\protect\thanksref{T1}}
\runtitle{Monotone hazard estimation}
\thankstext{T1}{We thank Jon Wellner for the fruitful cooperation we had for many years!}

\begin{aug}
\author{\fnms{Piet Groeneboom}\ead[label=e1]{P.Groeneboom@tudelft.nl}}
\and
\author{\fnms{Geurt Jongbloed}\ead[label=e2]{G.Jongbloed@tudelft.nl}}

\runauthor{P.\ Groeneboom and G.\ Jongbloed}

\affiliation{Delft University of Technology}

\address{Delft Institute of Applied Mathematics\\
Mekelweg 4, 2628 CD Delft\\
The Netherlands\\
\printead{e1}\\
\phantom{E-mail:\ }
\printead*{e2}}
\end{aug}

\begin{abstract}
We discuss a number of estimates of the hazard under the assumption that the hazard is monotone on an interval $[0,a]$. The usual isotonic least squares estimators of the hazard are inconsistent at the boundary points $0$ and $a$. We use penalization to obtain uniformly consistent estimators. Moreover, we determine the optimal penalization constants, extending related work in this direction by \cite{woodroofe_sun:93} and \cite{woodroofe_sun:99}. Two methods of obtaining smooth monotone estimates based on a non-smooth monotone estimator are discussed. One is based on kernel smoothing, the other on penalization.
\end{abstract}

\begin{keyword}[class=AMS]
\kwd[Primary ]{62G05}
\kwd{62G05}
\kwd[; secondary ]{62E20}
\end{keyword}

\begin{keyword}
\kwd{failure rate}
\kwd{isotonic regression}
\kwd{asymptotics}
\kwd{penalized estimators}
\kwd{smoothing}
\end{keyword}

\end{frontmatter}

\section{Introduction}
\label{section:intro}
In survival analysis and reliability theory, the hazard rate (also known as failure rate) is a natural function to model the distribution of data. It describes the probability of instantaneous failure at time $x$, given the subject has functioned until $x$. The exponential distributions are the only distributions with constant hazard rate, which is related to the `memoryless property' of this distribution. Other shapes of the hazard rate indicate whether the object suffers ageing (increasing hazard rate) or is getting more reliable having survived longer (decreasing hazard rate).

In estimating a hazard function under the restriction that it is monotone, popular methods are maximum likelihood and isotonic least squares projection (\cite{Robertson:88}, section 7.4). These estimators are typically piecewise constant and non-smooth. More recently, the method of monotonic rearrangements was studied in \cite{neumeyer07}. Depending on the choice of the initial estimator, these estimators can be smooth. Methods to obtain smooth estimators of the hazard rate include plug-in ratio estimators and smoothed empirical hazards as discussed in \cite{ricerosen76}. See  \cite{singpurwallawong:83} for an overview of the various estimators. These estimators are typically not monotone. In \cite{GrJo10a} the so-called maximum smoothed likelihood estimator was introduced, an estimator that is both smooth and monotone. In this paper, a type of non-smooth as well as smooth monotone estimators of a  monotone hazard rate will be studied. Before giving an outline of the paper, some words on our motivation to study this problem.

The problem of testing a null hypothesis of exponentiality (constant hazard rate) against the alternative of a monotone hazard rate, was extensively studied in the sixties of the preceding century; see e.g.\ \cite{prospyke:67}. Only quite recently, the problem of testing the null hypothesis of monotonicity of a hazard rate has received attention.  \cite{gh04} consider a multiscale version of the Proshan-Pyke test, and compute critical values based on the exponential distribution.   \cite{hallk:05} use an integral type test statistic that is based on second order differences of the empirical cumulative hazard function and approximate the critical values of the test using bootstrap samples from a well chosen smoothed version of the empirical cumulative hazard function.   \cite{durot:08} studies the supremum distance between two estimators of the cumulative hazard and obtains critical values using the exponential distribution. An alternative approach to this testing problem is developed in  \cite{GrJo10c}. There  an integral-type test statistic is introduced and a bootstrap approach is used  to determine approximate critical values.  This approach is shown to be less conservative than methods based on the exponential distribution and less anticonservative than the method proposed in \cite{hallk:05}. In order for the bootstrap method described in \cite{GrJo10c} to work well,  estimators for a  locally monotone hazard rate are needed that are smooth and uniformly consistent on the interval of monotonicity and behave properly near the boundary of the interval of monotonicity.

In this paper we concentrate on the nonparametric least squares method to estimate a locally monotone hazard rate and discuss smooth and non-smooth versions of this approach. It is well-known that the ``raw" least-squares or maximum likelihood method yields inconsistent estimates at the boundary (this will also be seen in section \ref{section:isotonic_estimates}). Following an approach introduced in the context of density estimation in \cite{woodroofe_sun:93,woodroofe_sun:99}, we introduce a penalty at the endpoints in the least squares criterion. In Theorem \ref{th:optimal_penalization} in section \ref{section:isotonic_estimates}  the asymptotically optimal penalization constants, minimizing an asymptotic mean squared error criterion, are determined. The optimal order of the penalization constants turns out to be $n^{-2/3}$, if $n$ is the sample size and it is assumed that the hazard is strictly increasing on the interval of interest. Somewhat different recommendations were given in \cite{woodroofe_sun:93,woodroofe_sun:99}, where penalization constants of the order $(\log n)/n$ and $1/\sqrt{n}$ were used, respectively (see also Remark \ref{remark:flat_hazards}).

There are several methods that can be used to construct smooth estimators based on a basic non-smooth monotone estimator discussed in section \ref{section:isotonic_estimates}. One method that automatically leads to monotone estimators, is kernel smoothing. In section \ref{section:kernel_estimates} this method  is described and the resulting estimator is shown to be asymptotically normally distributed. Moreover,  both locally and globally optimal bandwidths are determined for estimating the hazard rate.

In section \ref{section:penalization_estimates} smooth estimates based on penalizing the estimates of section \ref{section:isotonic_estimates} are studied. The penalization uses an integral over the square of the derivative of the hazard, as used in  \cite{tantwood:94} and \cite{palwood:07}. We show that full minimization of the penalized criterion yields a uniformly consistent estimate of the hazard, but gives inconsistent estimates of the derivative of the hazard at the boundary points, since the derivatives tend to zero at the boundary, as in \cite{tantwood:94} and \cite{palwood:07}. We remedy the latter difficulty by introducing two boundary conditions  in order to get consistent estimates of the derivative of the hazard, also at the boundary points. Having consistent estimates of the derivative of the hazard is important in generating bootstrap samples for finding critical values of (isotonic) tests for monotone hazards in the setting of \cite{GrJo10c}.

\section{Monotone least-squares estimates of the hazard}
\label{section:isotonic_estimates}
\setcounter{equation}{0}
Suppose we have a sample $X_1,\dots,X_n$ from a distribution function $F_0$ on $[0,\infty)$, with density $f_0$ and hazard function $h_0$. This latter function characterizes the distribution function $F_0$, which can be seen by the relation
$$
h_0(x)=-\frac{d}{dx}\log(1-F_0(x))=\frac{f_0(x)}{1-F_0(x)}
$$
with inverse
$$
F_0(x)=1-\exp\left(-\int_0^x h_0(y)\,dy\right).
$$
If one wants to estimate the hazard $h_0$ under the restriction that it is monotone on the interval $[0,a]$, one of the simplest estimates is the least squares estimate $\hat h_n$, which minimizes the quadratic criterion
\begin{equation}
\label{quad_criterion1}
\tfrac12\int_0^a h(x)^2\,dx-\int_{[0,a]} h(x)\,d\H_n(x),
\end{equation}
under the restriction that $h$ is monotone. Here $\H_n$ is the empirical cumulative hazard function
$$
\H_n(x)=-\log\left\{1-\F_n(x)\right\},\,x<\max_i X_i,
$$
and $\F_n$ is the empirical distribution function of the sample $X_1,\dots,X_n$. The rationale behind this criterion function is that $\H_n$ will be close to $H_0$ (defined as $\int_0^xh_0(y)\,dy$) asymptotically and $h\mapsto \tfrac12\int_0^ah(x)^2\,dx-\int_0^ah(x)\,dH_0(x)$ is minimized by taking $h=h_0$ (which can be seen by `completing the square'). Another option is to use maximum likelihood methods, but in view of our restriction of the monotonicity hypothesis to an interval, this method has more complications in the present case, so we will concentrate on least squares methods in this paper. For specificity, we shall consider the hypothesis that $h$ is nondecreasing on $[0,a]$, although similar methods can be used if the hypothesis is that $h$ is nonincreasing on $[0,a]$ or monotone on a compact interval not including zero.

The solution of the problem of minimizing (\ref{quad_criterion1}) is well-known, and found in the following way. Construct the so-called {\it cusum diagram}, consisting of the point $(0,0)$, and the points
$$
\left(X_{(i)},\H_n(X_{(i)}-)\right),\, 1\le i\le n,\,X_{(i)}<a,\qquad \left(a,\H_n(a-)\right),
$$
where the $X_{(i)}$ are the order statistics of the sample, and where we assume $X_{(n)}>a$. Then the solution $\hat h_n$ of the minimization problem is given by the left-continuous derivative of the greatest convex minorant of this cusum diagram.

To illustrate the behavior of the estimators in this paper, we introduce the family of hazards $\{h^{(d)}\,:\,d\in[-1,1]\}$, also considered in \cite{hallk:05}.
The corresponding distribution functions on $(0,\infty)$ are given by
\begin{equation}
\label{F_d}
F^{(d)}(x)=1-\exp\left\{-\tfrac12x-\tfrac52\left\{\tfrac14\left(x-\tfrac34\right)^4+\left(\tfrac34\right)^3x\right\}-\tfrac13dx^3+\tfrac58\left(\tfrac34\right)^4\right\}.
\end{equation}
If $d>0$ we get a strictly increasing hazard; if $d<0$, the hazard is decreasing on
$$
\left(\frac34-\frac2{15}d-\frac{2}{15}\sqrt{d^2-\frac{45}{4}d},\frac34-\frac2{15}d+\frac{2}{15}\sqrt{d^2-\frac{45}{4}d}\right)
$$
and if $d=0$ the hazard has a stationary point at $x=3/4$. See Figure \ref{fig:dfamilyhaz} for some hazards and corresponding densities in this family.

\begin{figure}
\begin{center}
\includegraphics[width=6cm]{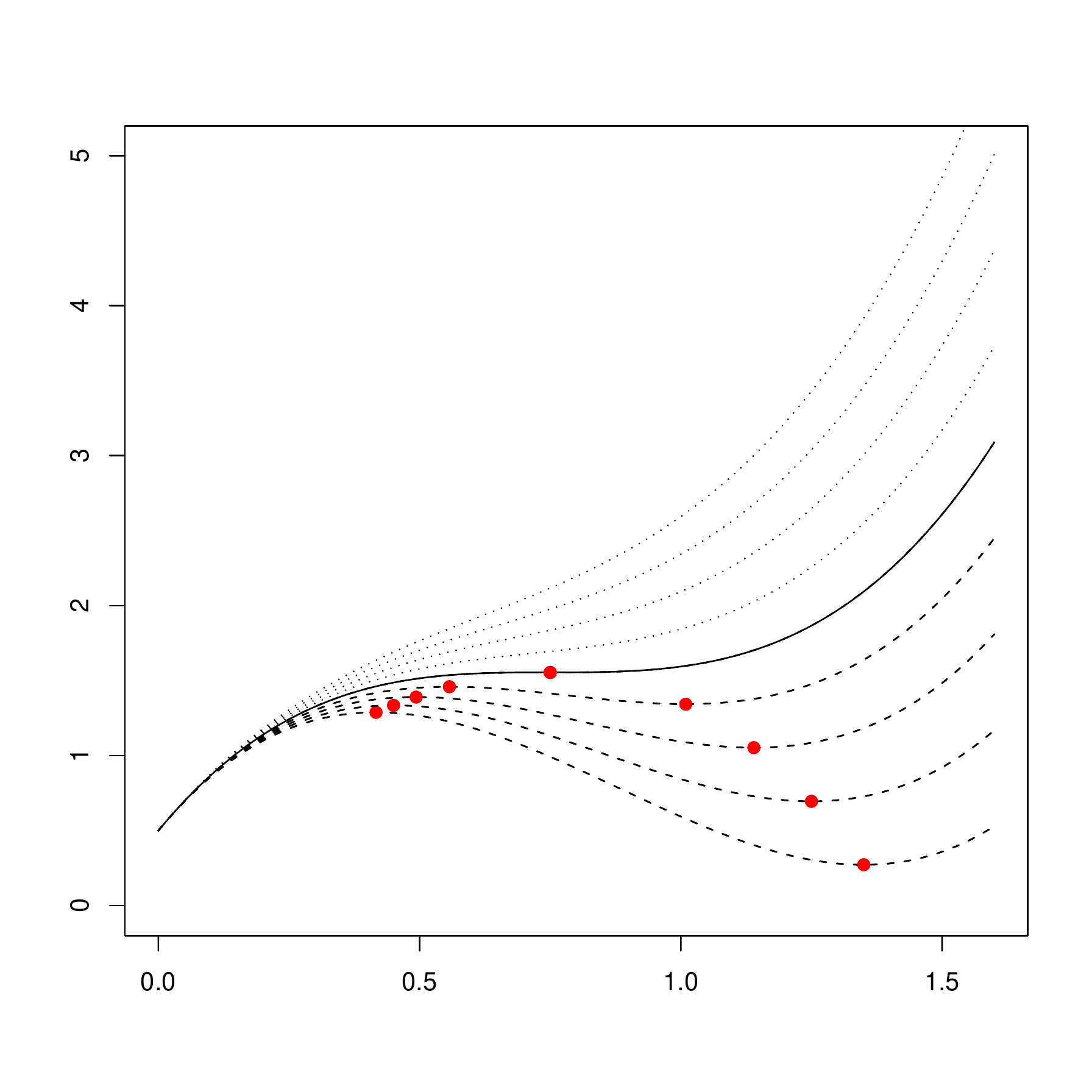}
\includegraphics[width=6cm]{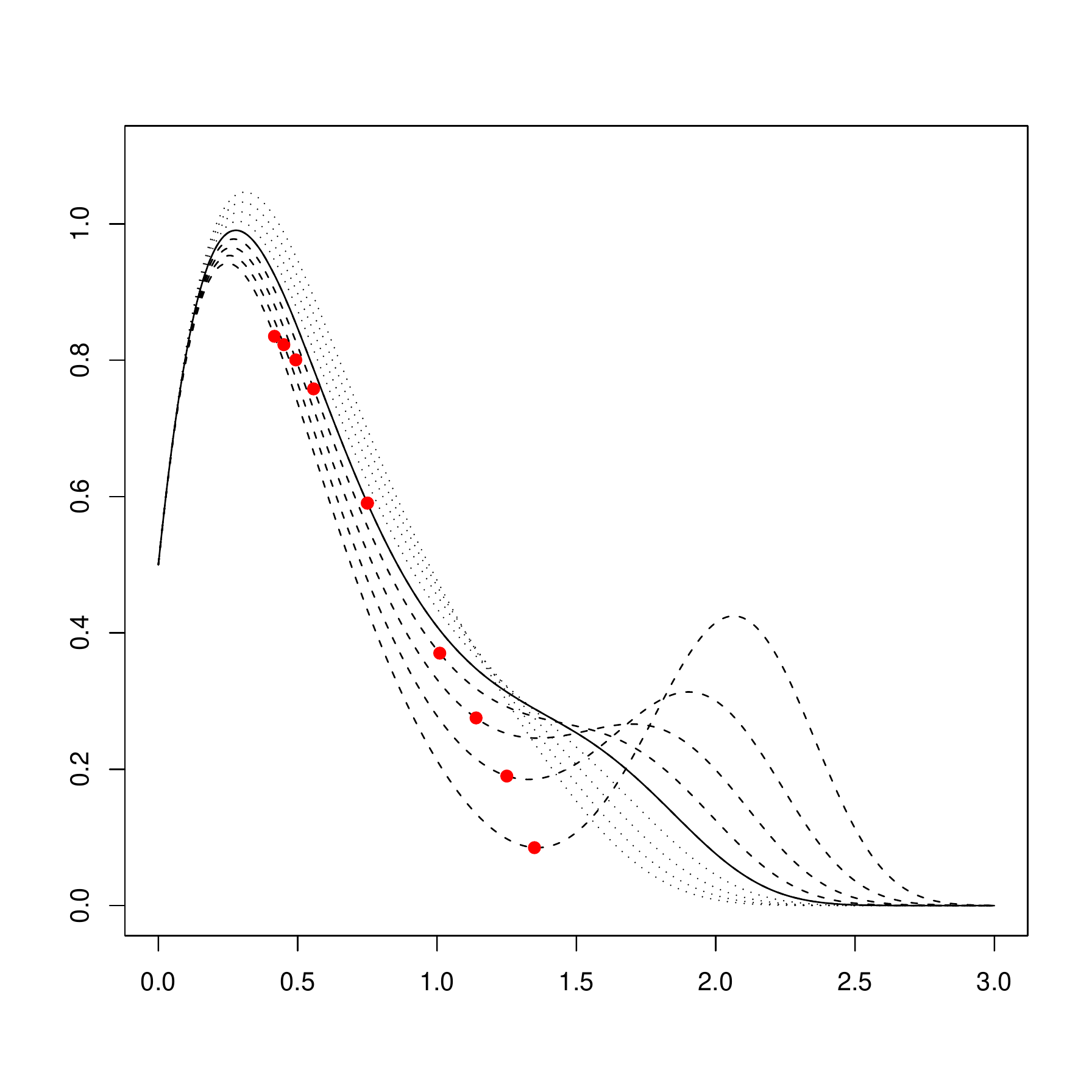}
\end{center}
\caption{The left panel shows the hazard functions $h^{(d)}$ for $d=-1,-0.75,-0.50,-0.25$ (dashed), $d=0$ (full curve) and $d=0.25,0.50,0.75,1$ (dotted) corresponding to distribution functions (\ref{F_d}). The stationary points are shown by the red dots. The right panel shows the corresponding densities.}
\label{fig:dfamilyhaz}
\end{figure}

\begin{remark}
{\rm
Note that we need the constant $\tfrac58\left(\tfrac34\right)^4$ in the exponent to make the distribution function zero at the left endpoint $0$, but that this constant is missing in the formula given below (4.1) on p.\ 1121 in \cite{hallk:05}.
}
\end{remark}

A picture of the cusum diagram and its greatest convex minorant (red) for a sample of size $n=100$ from the distribution function $F^{(1)}$ on the interval $[0,\left(F^{(1)}\right)^{-1}(0.95)]$ and the corresponding estimate of the hazard function are shown in Figure \ref{fig:hazard100_unpen}.

\begin{figure}
\begin{center}
\includegraphics[width=6cm]{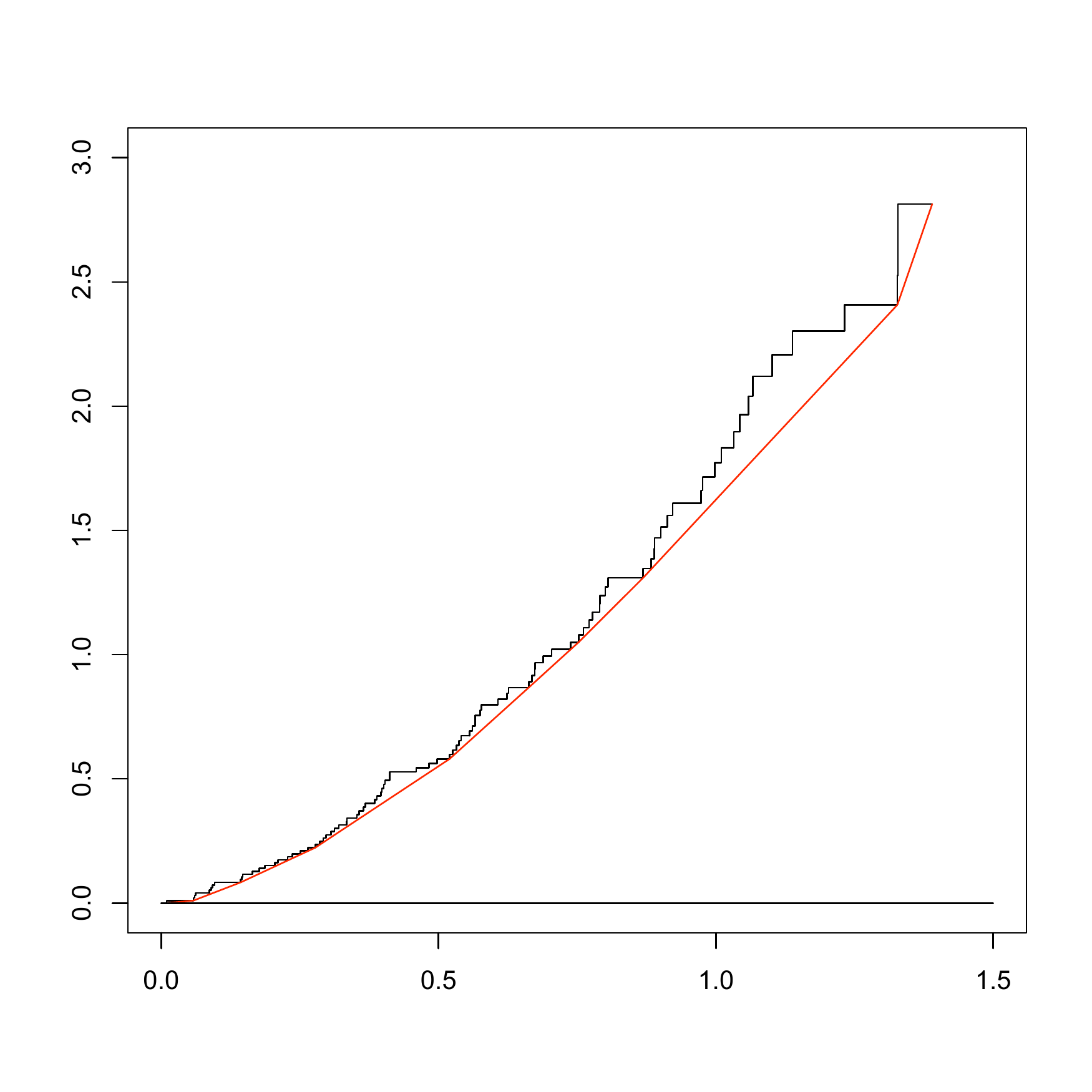}
\includegraphics[width=6cm]{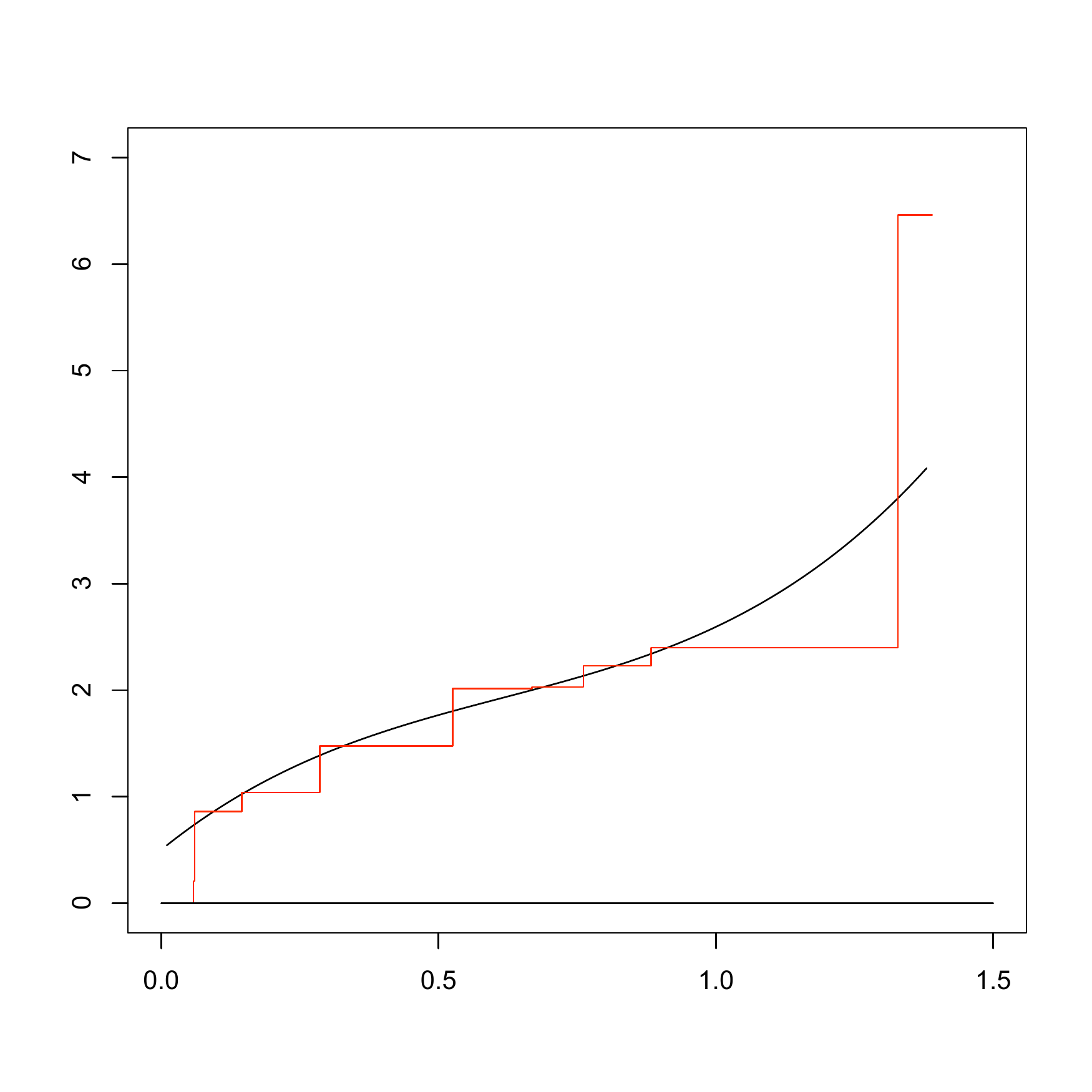}
\end{center}
\caption{The (unpenalized) cusum diagram and its greatest convex minorant (left panel) and the corresponding least squares estimate of the hazard (right panel)  for a sample of size $n=100$ from the distribution function $F^{(1)}$ on the interval $[0,\left(F^{(1)}\right)^{-1}(0.95)]$. The real hazard is the black curve in the right panel.}
\label{fig:hazard100_unpen}
\end{figure}

The lemma below shows that on intervals that stay away from the boundary points $0$ and $a$, the hazard estimator is uniformly consistent.
\begin{lemma}
\label{lem:conshnint}
Suppose $0\le\a_n,\b_n\rightarrow0$ as $n\to\infty$. Let $h_0$ be continuous and nondecreasing on $[0,a]$. Then for each $0<\delta<a/2$,
\begin{equation}
\label{eq:unifconvhn}
\sup_{[\delta,a-\delta]}|\hat{h}_n(x)-h_0(x)|\rightarrow0\mbox{ with probability one}.
\end{equation}
\end{lemma}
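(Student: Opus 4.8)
The plan is to reduce the assertion to a deterministic statement about greatest convex minorants and to supply the randomness through the Glivenko--Cantelli theorem. Write $H_0(x)=\int_0^xh_0(y)\,dy$. Since $h_0$ is continuous and nondecreasing on $[0,a]$, the function $H_0$ is convex and continuously differentiable on $[0,a]$ with $H_0'=h_0$, so $H_0$ is its own greatest convex minorant on $[0,a]$. Recall that $\hat h_n$ is the left-continuous derivative on $[0,a]$ of the greatest convex minorant of the cusum diagram; denote by $\Lambda_n$ the function on $[0,a]$ determined by this diagram (equivalently, the left-continuous version of $\H_n$ on $[0,a]$, with its first and last ordinates perturbed by the endpoint penalties, which are of order $\alpha_n$ and $\beta_n$), and let $G_n=\mathrm{GCM}_{[0,a]}\Lambda_n$, so that $\hat h_n$ is the left derivative of $G_n$ on $(0,a]$.

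The first step is to show $\gamma_n:=\sup_{[0,a]}|\Lambda_n-H_0|\to0$ with probability one. Since $F_0(a)<1$ (in force because $X_{(n)}>a$ is assumed, so $\H_n$ is well defined on $[0,a]$ for $n$ large, a.s.), the Glivenko--Cantelli theorem gives $\sup_{x\le a}|\F_n(x)-F_0(x)|\to0$ a.s., and composing with $t\mapsto-\log(1-t)$, which is uniformly continuous on $[0,F_0(a)]$, yields $\sup_{x\le a}|\H_n(x)-H_0(x)|\to0$ a.s. The jumps of $\H_n$ on $[0,a]$ are of size $\log\{(n-i+1)/(n-i)\}=O(1/(n(1-F_0(a))))\to0$, and the endpoint penalties are $o(1)$, so $\gamma_n\to0$ a.s. By the Marshall-type argument, $H_0-\gamma_n$ is convex and lies below $\Lambda_n$, hence below $G_n$, while $G_n\le\Lambda_n\le H_0+\gamma_n$; therefore $\sup_{[0,a]}|G_n-H_0|\le\gamma_n\to0$ a.s.

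The second step passes from this to convergence of the left derivatives on $[\delta,a-\delta]$. Fix $\epsilon>0$; by uniform continuity of $h_0$ on $[0,a]$ choose $\eta\in(0,\delta)$ with $|h_0(u)-h_0(v)|<\epsilon$ whenever $|u-v|\le\eta$. For $x\in[\delta,a-\delta]$ we have $x\pm\eta\in(0,a)$, and convexity of $G_n$ gives
\[
\frac{G_n(x)-G_n(x-\eta)}{\eta}\ \le\ \hat h_n(x)\ \le\ \frac{G_n(x+\eta)-G_n(x)}{\eta}.
\]
Replacing each value of $G_n$ by the corresponding value of $H_0$ at a cost of at most $\gamma_n$, and using $\eta^{-1}\int_x^{x+\eta}h_0\le h_0(x)+\epsilon$ together with $\eta^{-1}\int_{x-\eta}^{x}h_0\ge h_0(x)-\epsilon$, one obtains $\sup_{[\delta,a-\delta]}|\hat h_n(x)-h_0(x)|\le\epsilon+2\gamma_n/\eta$. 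Since $\gamma_n\to0$ a.s.\ (for this fixed $\eta$) and $\epsilon>0$ was arbitrary, (\ref{eq:unifconvhn}) follows.

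The argument is essentially routine; the only points needing care are that the cusum diagram $\Lambda_n$ — including the effect of the endpoint penalties and of the $O(1/n)$ jumps of $\H_n$ — still converges uniformly to $H_0$ on all of $[0,a]$, and that the smoothing scale $\eta$ in the sandwich for $\hat h_n(x)$ can be chosen independently of $x\in[\delta,a-\delta]$, which is exactly what uniform continuity of $h_0$ provides. Monotonicity of $h_0$ enters only through convexity of $H_0$, which is what makes the limit of the convex minorants $G_n$ equal to $H_0$ (rather than a flattened version) and hence the limit of $\hat h_n$ equal to $h_0$ itself.
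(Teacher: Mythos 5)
Your proof is correct and follows essentially the same route as the paper's: Glivenko--Cantelli plus the Marshall-type sandwich $H_0-\gamma_n\le G_n\le \H_n\le H_0+\gamma_n$, then bounding the left derivative of the convex minorant by chord slopes over a small increment. The only cosmetic difference is that you obtain uniformity on $[\delta,a-\delta]$ directly by fixing the increment $\eta$ via uniform continuity of $h_0$, whereas the paper first proves pointwise convergence and then upgrades to uniform convergence using monotonicity of $\hat h_n$ and $h_0$; you are also slightly more explicit in checking that the endpoint penalties $\a_n,\b_n\to0$ do not disturb the uniform convergence of the cusum diagram.
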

\begin{proof} The argument is similar to that in Theorem 3 in \cite{GroJo95}.  First note that  $\H_n$ converges to $H_0$ uniformly on $[0,a]$ almost surely by the Glivenko Cantelli theorem. Since a.s.\ for any $\epsilon>0$, $H_0-\epsilon\le \hat{H}_n\le {\H}_n\le H_0+\epsilon$ on $[0,a]$  for all $n$ sufficiently large (since $\hat H_n$ is {\it the greatest} convex minorant of $\H_n$ and $H_0-\epsilon$ is a.s.\ {\it a} convex minorant of $\H_n$ for $n$ sufficiently large),  $\hat H_n$ converges to $H_0$ uniformly on $[0,a]$ almost surely.

Now fix $x\in(0,a)$. Then for each $\epsilon>0$ such that $(x-\epsilon,x+\epsilon)\subset[a,b]$, we have by definition of $\hat h_n$
$$
\frac{\hat{H}_n(x)-\hat{H}_n(x-\epsilon)}{\epsilon}\le \hat h_n(x)\le \frac{\hat{H}_n(x+\epsilon)-\hat{H}_n(x)}{\epsilon}.
$$
The left hand side converges a.s.\ to $(H_0(x)-H_0(x-\epsilon))/\epsilon$; the right hand side to $(H_0(x+\epsilon)-H_0(x))/\epsilon$. Since $\epsilon$ was chosen arbitrarily, this shows (by continuity of $h_0$ on $[0,a]$) that $\hat h_n(x)\rightarrow h_0(x)$ w.p.\ 1. Uniform convergence on $[\delta,a-\delta]$  follows by monotonicity of both $\hat h_n$ and $h_0$ and continuity of $h_0$ on $[0,a]$. \end{proof}

It is well-known that this estimate has the undesirable feature of being inconsistent at the boundary points $0$ and $a$, and indeed one notices in Figure \ref{fig:hazard100_unpen} that the estimate $\hat h_n(0)$ is too low and the estimate $\hat h_n(a)$ is too high. In fact, it immediately follows from the representation of $\hat{h}_n$ that $\hat{h}_n=0$ on $(0,X_{(1)})$ for all $n$. To remedy a similar problem in the context of maximum likelihood estimation of a monotone density, \cite{woodroofe_sun:93,woodroofe_sun:99} suggest to introduce a penalty at the endpoints. We also use that method in the present situation.

To this end, we introduce the penalized cusum diagram, consisting of the point $(0,0)$, and the points
\begin{equation}
\label{cusum2}
\left(X_{(i)},\H_n(X_{(i)}-)+\a_n\right),\,X_{(i)}<a,\qquad \left(a,\H_n(a-)+\a_n-\b_n\right),
\end{equation}
where $\a_n$ and $\b_n$ are  nonnegative penalty parameters. The left derivative of the present cusum diagram minimizes the criterion
\begin{equation}
\label{LS_criterion2}
\tfrac12\int_0^a h(x)^2\,dx-\int_{[0,a]} h(x)\,d\H_n(x)-\a_n h(0)+\b_n h(a),
\end{equation}
over all nondecreasing functions $h$ on $[0,a]$. Consistency of the resulting estimator on $[\delta,a-\delta]$ is obtained by following the proof of Lemma \ref{lem:conshnint}. This characterization of the estimator  also leads to consistency of $\hat h_n$ at the boundary points $0$ and $a$. Moreover, the optimal order of convergence to zero of the parameters $\alpha_n$ and $\beta_n$ which is important in order to get a feeling for what to do  in practice, can be determined. In \cite{woodroofe_sun:93} it is suggested to take a related penalty of order $(\log n)/n$ and in \cite{woodroofe_sun:99}  to take a penalty of order $1/\sqrt{n}$.
One of the statements in the theorem below is that, under the assumption that $h_0$ stays away from zero and is strictly increasing on $[0,a]$, the optimal penalty is of order $n^{-2/3}$.

\begin{thm}
\label{th:optimal_penalization}
Let $h_0$ be nondecreasing on $[0,a]$ with strictly positive and continuous (one-sided) derivatives at $0$ and $a$. Let $0\le\a_n,\b_n\rightarrow0$.
Then:
\begin{enumerate}
\item[(i)] For each $0<\delta<a$, with probability one, for all $n$ sufficiently large $$\hat{h}_n(0)=\inf_{x\in[0,\delta]}\frac{\H_n(x)+\alpha_n}{x} \mbox{ and } \hat{h}_n(a)=\sup_{x\in[a-\delta,a]}\frac{\H_n(a)-\beta_n-\H_n(x)}{a-x}.
    $$
\item[(ii)] The asymptotically MSE optimal rates for the penalization parameters are $\a_n,\beta_n\sim n^{-2/3}$.
\item[(iii)] Let $W$ be standard Brownian motion on $[0,\infty)$. Taking $\a_n=\a n^{-2/3}$ and $b_n=\b n^{-2/3}$,
\begin{equation}
\label{eq:asympdistzero}
n^{1/3}\left(\hat h_n(0)-h_0(0)\right)\Dconv \inf_{t>0}\left(\frac{W(h_0(0)t)}{t}+\frac{\alpha}{t}+\tfrac12h_0^{\prime}(0)t\right)
\end{equation}
and
$$
n^{1/3}\left(\hat h_n(a)-h_0(a)\right)\Dconv \inf_{t>0}\left(\frac{W(h_0(a)t)}{t}+\frac{\beta}{t}+\tfrac12h_0^{\prime}(a)t\right)
$$
\item[(iv)] The asymptotically MSE-optimal choices for the penalization parameters are $\alpha_n=\alpha n^{-2/3}$ and $\beta_n=\beta n^{-2/3}$ where $\a>0$ is the minimizer of
$$
E\min_{t>0}\left\{\tfrac12h_0'(0)t+\left\{\a+W(h_0(0)t)\right\}/t\right\}^2,
$$
and $\b>0$ is the minimizer of
$$
E\min_{t>0}\left\{\tfrac12h_0'(a)t+\left\{\b+W(h_0(a)t)\right\}/t\right\}^2.
$$
\end{enumerate}
\end{thm}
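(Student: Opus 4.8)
The plan is to prove the four parts in the order (i), (iii), (ii), (iv): part~(i) turns $\hat h_n(0)$ and $\hat h_n(a)$ into explicit extrema of chord slopes, after which the limit law (iii) follows by localisation and a weak-convergence argument, and (ii), (iv) are read off from (iii).

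\emph{Part (i).} Since $\hat h_n$ is the left derivative of the greatest convex minorant of the penalised cusum diagram (\ref{cusum2}), $\hat h_n(0)$ is the smallest slope of a chord from $(0,0)$ to another vertex,
\[
\hat h_n(0)=\min\!\left(\ \min_{i:\,X_{(i)}<a}\frac{\H_n(X_{(i)}-)+\a_n}{X_{(i)}}\ ,\ \frac{\H_n(a-)+\a_n-\b_n}{a}\ \right),
\]
and since $\H_n$ is constant and nonnegative on each $[X_{(i)},X_{(i+1)})$, the map $x\mapsto(\H_n(x)+\a_n)/x$ is nonincreasing there, so the first inner minimum equals $\inf_{0<x<a}(\H_n(x)+\a_n)/x$. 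To shrink the range to $[0,\delta]$ I would use that $H_0$ is convex (as $h_0$ is nondecreasing), hence $x\mapsto H_0(x)/x$ is nondecreasing and, since $h_0'(0)>0$, strictly increasing near $0$, so $H_0(\delta)/\delta>h_0(0)$; combined with $\sup_{[0,a]}|\H_n-H_0|\to0$ a.s.\ (the Glivenko--Cantelli step in the proof of Lemma~\ref{lem:conshnint}) and $\a_n,\b_n\to0$, this gives a.s.\ for $n$ large $\inf_{x\in[0,\delta]}(\H_n(x)+\a_n)/x\le(\H_n(x_0)+\a_n)/x_0\to H_0(x_0)/x_0<H_0(\delta)/\delta$ for a suitable fixed $x_0$, while for $x\in[\delta,a]$ and for the $a$-vertex the quotient is $\ge H_0(\delta)/\delta-o(1)$; hence the extremal chord ends in $(0,\delta)$ and $\hat h_n(0)=\inf_{x\in[0,\delta]}(\H_n(x)+\a_n)/x$. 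The formula for $\hat h_n(a)$ is the mirror image: $\hat h_n(a)$ is the largest slope of a chord \emph{into} the vertex $(a,\H_n(a-)+\a_n-\b_n)$, localised to $[a-\delta,a]$ by the same convexity.

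\emph{Part (iii).} Fix $\a>0$, set $\a_n=\a n^{-2/3}$, and substitute $x=n^{-1/3}t$ in the formula from (i):
\[
n^{1/3}\bigl(\hat h_n(0)-h_0(0)\bigr)=\inf_{0<t\le\delta n^{1/3}}Z_n(t),\qquad
Z_n(t)=\frac{n^{2/3}\bigl(\H_n(n^{-1/3}t)-H_0(n^{-1/3}t)\bigr)}{t}+\frac{n^{2/3}\bigl(H_0(n^{-1/3}t)-h_0(0)n^{-1/3}t\bigr)}{t}+\frac{\a}{t}.
\]
I would then assemble three ingredients. (a) \emph{Local weak convergence:} $n^{2/3}\bigl(\H_n(n^{-1/3}\cdot)-H_0(n^{-1/3}\cdot)\bigr)\Rightarrow W(h_0(0)\cdot)$ uniformly on compacts of $[0,\infty)$; near $0$ one has $\H_n-H_0=(\F_n-F_0)(1+o(1))$ and $n\F_n(n^{-1/3}t)$ is $\mathrm{Bin}\bigl(n,F_0(n^{-1/3}t)\bigr)$ with mean $h_0(0)n^{2/3}t\to\infty$, so a triangular-array CLT (equivalently the functional CLT for the empirical process on a shrinking neighbourhood of $0$, the bridge tie-down being negligible as $F_0\downarrow0$) gives Brownian motion with variance rate $h_0(0)=f_0(0)$. (b) \emph{Bias:} $n^{2/3}\bigl(H_0(n^{-1/3}t)-h_0(0)n^{-1/3}t\bigr)=\tfrac12 h_0'(0)t^2+o(1)$ uniformly on compacts, by Taylor expansion using continuity of $h_0'$ at $0$, so the middle term of $Z_n$ converges to $\tfrac12 h_0'(0)t$. (c) \emph{Argmin/continuous mapping:} the infimum escapes neither to $0$ nor to the moving end $\delta n^{1/3}$. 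Near $0$, $Z_n(t)\ge(\a-\sup_{0<s\le\varepsilon}|n^{2/3}(\H_n-H_0)(n^{-1/3}s)|)/t\ge\a/(2\varepsilon)$ on $(0,\varepsilon]$ once $\varepsilon$ is small, since the displayed supremum is $O_P(\sqrt\varepsilon)$ uniformly in $n$, and in the limit $\{W(h_0(0)t)+\a\}/t+\tfrac12 h_0'(0)t\to+\infty$ as $t\downarrow0$ because $\a>0$, $W(0)=0$. Near the moving end, $\inf_{M\le t\le\delta n^{1/3}}n^{2/3}(H_0(n^{-1/3}t)-h_0(0)n^{-1/3}t)/t\ge\tfrac12 h_0'(0)M(1+o(1))$ by monotonicity of $H_0(x)/x$, while a weighted empirical-process bound gives $\sup_{M\le t\le\delta n^{1/3}}|n^{2/3}(\H_n-H_0)(n^{-1/3}t)|/t=n^{1/3}\sup_{Mn^{-1/3}\le x\le\delta}|(\H_n-H_0)(x)|/x=O_P(M^{-1/2})$; so for $M$ large the minimiser of $Z_n$ lies in a fixed compact $[\varepsilon,M]$ with probability tending to one. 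Then $\inf_{[\varepsilon,M]}Z_n\Rightarrow\inf_{[\varepsilon,M]}Z$ by continuous mapping, and $\varepsilon\downarrow0$, $M\uparrow\infty$ yield (\ref{eq:asympdistzero}). The statement at $a$ follows verbatim after the reflection $x\mapsto a-x$, with $\H_n(a)-\H_n(a-\cdot)$ replacing $\H_n(\cdot)$, $\b_n$ replacing $\a_n$, and $h_0'(a)$ replacing $h_0'(0)$; the relevant local variance rate there is read off from the jump sizes of $\H_n=-\log(1-\F_n)$ near $a$.

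\emph{Parts (ii) and (iv).} From (iii) with $\a_n=\a n^{-2/3}$, together with uniform integrability of $\{n^{2/3}(\hat h_n(0)-h_0(0))^2\}_n$ --- which follows from the same estimates, $\inf_tZ_n(t)$ being squeezed between $Z_n(t_0)$ for a fixed $t_0$ and a lower bound $-C\varepsilon^{-1}\sup_{[0,Mn^{-1/3}]}n^{2/3}|\H_n-H_0|-C$, both with uniformly bounded $(2+\eta)$-moments --- one gets $n^{2/3}\E(\hat h_n(0)-h_0(0))^2\to\E[\min_{t>0}\{\tfrac12 h_0'(0)t+\{\a+W(h_0(0)t)\}/t\}^2]$, and minimising this (and its analogue at $a$) over the constant gives (iv). Part (ii) then reduces to ruling out other polynomial rates: if $n^{2/3}\a_n\to0$ the penalty term of $Z_n$ vanishes and, taking $t=n^{2/3}\a_n\to0$, $Z_n(t)$ has standard deviation $\to\infty$, so $n^{1/3}(\hat h_n(0)-h_0(0))=\inf_tZ_n(t)\to-\infty$ in probability and the MSE is of larger order than $n^{-2/3}$; if $n^{2/3}\a_n\to\infty$ the penalty-induced bias $\sqrt{2\a_n h_0'(0)}$ dominates the stochastic fluctuation at its natural scale $x\asymp\sqrt{\a_n}$, so $\hat h_n(0)-h_0(0)=\sqrt{2\a_n h_0'(0)}(1+o_P(1))$ and the MSE is of order $\a_n\gg n^{-2/3}$; hence the MSE-optimal rate is exactly $\a_n\sim\b_n\sim n^{-2/3}$. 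The main obstacle throughout is ingredient~(c): the weighted empirical-process bound $\sup_{Mn^{-1/3}\le x\le\delta}|(\H_n-H_0)(x)|/x=O_P(M^{-1/2}n^{-1/3})$ (up to logarithmic factors) that pins the argmin inside a fixed compact window, plus the matching uniform $(2+\eta)$-moment control; everything else is bookkeeping with the greatest-convex-minorant representation and the local Brownian approximation of $\H_n$.
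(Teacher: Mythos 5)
Your proposal is correct and follows essentially the same route as the paper: part (i) via strict monotonicity of $x\mapsto H_0(x)/x$ together with uniform convergence of $\H_n$; part (iii) via localization at scale $n^{-1/3}$, the decomposition into a local Brownian term, the penalty $\alpha/t$ and the drift $\tfrac12h_0'(0)t$, and confinement of the argmin to a compact $[\epsilon,M]$ (your weighted empirical-process bound plays exactly the role of the paper's Lemma~\ref{lem:locsum}, which achieves the same tail control by blocking $[M,\nu n^{1/3}]$ into unit intervals and applying the Kim--Pollard maximal inequality); and parts (ii) and (iv) by rate balancing and minimizing the limiting MSE. The only substantive differences are on the side of extra care: your explicit under-/over-penalization dichotomy for (ii) replaces the paper's term-balancing in $V_n$, and your uniform-integrability step, needed to pass from the limit law to convergence of the mean squared error in (iv), is left implicit in the paper.
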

\begin{proof} We concentrate on the situation at $x=0$ with $\a_n$ as penalty parameter. The right boundary at $x=a$ with penalty parameter $\b_n$ can be dealt with similarly.\\
(i) Fix $\delta>0$. The local assumption on $h_0$ near zero implies that $x\mapsto H_0(x)/x$ is strictly increasing on $(0,a]$. Hence
\begin{equation}
\label{eq:locdelt}
\inf_{x\in [\delta,a]}\frac{H_0(x)}{x}=\frac{H_0(\delta)}{\delta}>\frac{H_0(\delta/2)}{\delta/2}.
\end{equation}
For convenience, write $\tilde{\H}_n=\H_n+\alpha_n1_{(0,\infty)}-\beta_n 1_{[a,\infty)}$. By the uniform convergence of $\tilde{\H}_n$ to $H_0$ on  $[0,a]$, $x\mapsto \tilde{\H}_n(x)/x$ converges to $x\mapsto H_0(x)/x$ uniformly on $[\delta/2,a]$. Combined with (\ref{eq:locdelt}), this shows that with probability one, for all $n$ sufficiently large
$$
\inf_{x\in [\delta,a]}\frac{\tilde{\H}_n(x)}{x}
>\frac{\tilde{\H}_n(\delta/2)}{\delta/2},
\mbox{ implying }\hat{h}_n(0)=\inf_{x\in[0,\delta]}\frac{\H_n(x)+\alpha_n}{x}.
$$ \\
(ii) Let $(\alpha_n)$ be given and  $(\delta_n)$ be a sequence with $0<\delta_n\rightarrow0$ and $n\delta_n\rightarrow\infty$ as $n\rightarrow\infty$. Consider the localized and centered process
$$
t\mapsto \frac{\H_n(\delta_nt)-H_0(\delta_nt)}{\delta_nt}+\frac{\alpha_n}{\delta_nt}+\frac{H_0(\delta_nt)}{\delta_nt}-h_0(0)=V_n(t)
$$
and note that
$$
\hat h_n(0)-h_0(0)=\inf_{t>0}V_n(t).
$$
For fixed $t$,
\begin{equation}
\label{eq:Vn}
V_n(t)=(n\delta_n)^{-1/2}\frac{W_n(t)}{t}+\frac{\alpha_n/\delta_n}{t}+\tfrac12h_0^{\prime}(0)\delta_n t+\tfrac12(h_0^{\prime}(\xi_n)-h_0^{\prime}(0))\delta_n t
\end{equation}
where $0\le\xi_n\le \delta_n t$ and $W_n(t)$ is an asymptotically non-degenerate random variable. Moreover, for $W$ standard Brownian Motion on $[0,\infty)$,
\begin{equation}
\label{eq:convbrown}
W_n(t)=\sqrt{\frac{n}{\delta_n}}\left(\H_n(\delta_nt)-H_0(\delta_nt)\right)\Dconv W(h_0(0)t)
\end{equation}
in $D([0,\infty))$ endowed with the topology of uniform convergence on compacta. Ignoring the (asymptotically negligible) last term in (\ref{eq:Vn}), we see that balancing the two deterministic terms yields $\delta_n\sim \sqrt{\alpha_n}$; taking $\delta_n$ converging either faster or slower to zero than this, will lead to a slower rate of convergence of $V_n$ to zero. Using this choice, $V_n(t)=O_P\left(n^{-1/2}\alpha_n^{-1/4}\right)+O\left(\alpha_n^{1/2}\right)$. This shows that starting off with $\alpha_n\sim n^{-2/3}$ leads to the fastest rate of convergence of $V_n(t)$ to zero. \\
(iii) We now take $\delta_n=n^{-1/3}$ and $\alpha_n=\alpha n^{-2/3}$ with $\alpha>0$. Also using (i) and the local assumption on $h_0$ near zero leads for any $\nu>0$ to the approximate asymptotic representation
\begin{eqnarray*}
n^{1/3}\left(\hat h_n(0)-h_0(0)\right)&=&\inf_{t\in(0,\nu n^{1/3}]}n^{1/3}V_n(t)\\&=&\inf_{t\in(0,\nu n^{1/3}]}\left(\frac{W_n(t)}{t}+\frac{\alpha}{t}+\tfrac12h_0^{\prime}(0)t\right)
\end{eqnarray*}
where ignoring the last term in (\ref{eq:Vn}) is justified because $\nu$ can be chosen arbitrarily small ($0\le \xi_n\le \nu$). In Lemma \ref{lem:locsum} in the appendix we show that by taking $M>0$ sufficiently large and $\epsilon>0$ sufficiently small,
$$
n^{1/3}\left(\hat h_n(0)-h_0(0)\right)=\inf_{t\in(\epsilon,M]}\left(\frac{W_n(t)}{t}+\frac{\alpha}{t}+\tfrac12h_0^{\prime}(0)t\right)
$$
with arbitrarily high probability. Together with (\ref{eq:convbrown}), and the fact that for Brownian Motion on $[0,\infty)$
$$
\inf_{t>0}\left(\frac{W(h_0(0)t)}{t}+\frac{\alpha}{t}+\tfrac12h_0^{\prime}(0)t\right)=\inf_{t\in(\epsilon,M]}\left(\frac{W(h_0(0)t)}{t}+\frac{\alpha}{t}+\tfrac12h_0^{\prime}(0)t\right)
$$
with arbitrarily high probability by taking $\epsilon>0$ sufficiently small and $M>0$ sufficiently large, this leads to (\ref{eq:asympdistzero}). Finally, the optimal asymptotically MSE-optimal value for $\alpha$ in (iv) is obtained by minimizing the expectation of the square of the right hand side of (\ref{eq:asympdistzero}) as a function of $\alpha$.  \end{proof}

\begin{remark}
\label{remark:flat_hazards}
{\rm Theorem \ref{th:optimal_penalization} gives the optimal penalization constants for the case that the hazard is strictly increasing on $[0,a]$. The situation is quite different if, e.g., the hazard is constant on $[0,\delta]$ for some $\delta>0$. In view of (\ref{eq:Vn}), the linear (in $t$) terms are not present, and in order to make $V_n(t)$ as small as possible, $\delta_n$ should {\it not} tend to zero and $\alpha_n$ should be chosen of the order $n^{-1/2}$.
So in this case the type of scaling used in \cite{woodroofe_sun:99} seems the more natural type of scaling.
The limit behavior of the greatest convex minorant which one gets in this case (for $\a=0$) is analyzed in \cite{piet:83}, whereas the limit situation for the case that the greatest convex minorant corresponds to a strictly convex function (where $n^{-2/3}$ is the natural scale of the penalization constants) is analyzed in \cite{piet:89}.
}
\end{remark}

\begin{remark}
{\rm For $0<x<a$, it can be shown, using arguments similar to those used in \cite{EJZ:98} that under the assumption that $h_0^\prime$ is continuous and strictly positive at $x$,
$$
n^{1/3}\left(\frac{2f_0(x)}{h_0(x)^2h_0^{\prime}(x)}\right)^{1/3}\left(\hat{h}_n(x)-h_0(x)\right)\Dconv 2V,
$$
where $V=\argmax_t\left(W(t)-t^2\right)$, with $W$ standard two-sided Brownian Motion,  has the Chernoff distribution (\cite{chern:64} and \cite{grwe:01}).  This asymptotic distribution is the same as that of the MLE of an increasing hazard function as given in Theorem 6.1 in \cite{prakasarao:70}.
}
\end{remark}

\noindent
Having uniform consistency on arbitrarily large intervals in $[0,a]$ staying away from the boundary and consistency at the boundary points, monotonicity can be used to get uniform consistency of $\hat h_n$ on the whole interval $[0,a]$. We prove a somewhat stronger (not sharp but easy to prove) uniform rate result, that is needed in the proof of Theorem \ref{th:consistency_derivative}.

\begin{corollary}
\label{lem:conshn}
Under the conditions of Lemma \ref{lem:conshnint} and Theorem \ref{th:optimal_penalization},
\begin{equation}
\label{eq:suprate}
\sup_{[0,a]}|\hat{h}_n(x)-h_0(x)|=O_P\left(n^{-1/4}\right).
\end{equation}
\end{corollary}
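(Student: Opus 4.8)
The plan is to prove (\ref{eq:suprate}) in three stages: first upgrade the uniform control of the integrated estimator to an $O_P(n^{-1/2})$ rate, then convert this into a pointwise rate for $\hat h_n$ on an interior block by a chord (window) argument, and finally patch the two boundary layers of width $\eta_n=n^{-1/4}$ using monotonicity of $\hat h_n$ and $h_0$ together with the endpoint results of Theorem~\ref{th:optimal_penalization}. Write $\tilde{\H}_n=\H_n+\a_n 1_{(0,\infty)}-\b_n 1_{[a,\infty)}$, so $\hat H_n$ is the greatest convex minorant of $\tilde{\H}_n$ on $[0,a]$ and $\hat h_n$ its left derivative, and put $D_n=\sup_{[0,a]}|\tilde{\H}_n-H_0|$. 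Since $u\mapsto-\log(1-u)$ is Lipschitz on $[0,F_0(a)]$ with $F_0(a)<1$ and $\sup_{[0,a]}|\F_n-F_0|=O_P(n^{-1/2})$, one gets $\sup_{[0,a]}|\H_n-H_0|=O_P(n^{-1/2})$, and with $\a_n,\b_n=O(n^{-2/3})$ this gives $D_n=O_P(n^{-1/2})$. Because $h_0$ is nondecreasing, $H_0$ is convex, so $H_0-D_n$ is \emph{a} convex minorant of $\tilde{\H}_n$ on $[0,a]$ and hence lies below \emph{the} greatest one: $\hat H_n\ge H_0-D_n$. Combined with $\hat H_n\le\tilde{\H}_n\le H_0+D_n$ this yields the Marshall-type bound $\|\hat H_n-H_0\|_{\infty,[0,a]}\le D_n=O_P(n^{-1/2})$.

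For the interior, fix $x$ and $\eta>0$ with $[x-\eta,x+\eta]\subset[0,a]$. Convexity of $\hat H_n$ gives
$$
\frac{\hat H_n(x)-\hat H_n(x-\eta)}{\eta}\ \le\ \hat h_n(x)\ \le\ \frac{\hat H_n(x+\eta)-\hat H_n(x)}{\eta}.
$$
Bounding $\hat H_n$ by $H_0\pm D_n$, using $\tfrac1\eta\int_x^{x+\eta}h_0(y)\,dy\le h_0(x+\eta)$ and $\tfrac1\eta\int_{x-\eta}^x h_0(y)\,dy\ge h_0(x-\eta)$, and invoking Lipschitz continuity of $h_0$ on $[0,a]$ with constant $L$, we obtain
$$
|\hat h_n(x)-h_0(x)|\ \le\ L\eta+\frac{2D_n}{\eta},\qquad x\in[\eta,a-\eta].
$$
Taking the deterministic window $\eta_n=n^{-1/4}$ gives $\sup_{[\eta_n,\,a-\eta_n]}|\hat h_n-h_0|=O_P(n^{-1/4})$.

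It remains to cover $[0,\eta_n]$ and $[a-\eta_n,a]$. For $x\in[0,\eta_n]$, monotonicity of $\hat h_n$ and $h_0$ gives
$$
\hat h_n(0)-h_0(\eta_n)\ \le\ \hat h_n(x)-h_0(x)\ \le\ \hat h_n(\eta_n)-h_0(0).
$$
The right-hand side equals $\{\hat h_n(\eta_n)-h_0(\eta_n)\}+\{h_0(\eta_n)-h_0(0)\}=O_P(n^{-1/4})+O(\eta_n)$ by the previous step and differentiability of $h_0$ at $0$; the left-hand side equals $\{\hat h_n(0)-h_0(0)\}+\{h_0(0)-h_0(\eta_n)\}=O_P(n^{-1/3})+O(\eta_n)$ by Theorem~\ref{th:optimal_penalization}(iii). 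Hence $\sup_{[0,\eta_n]}|\hat h_n-h_0|=O_P(n^{-1/4})$, and symmetrically $\sup_{[a-\eta_n,a]}|\hat h_n-h_0|=O_P(n^{-1/4})$ via the corresponding statement at $a$. Combining the three ranges gives (\ref{eq:suprate}).

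The crux is the second step: passing from a sup-norm rate on the primitive to a rate on its left derivative costs a square root, which is precisely why the bound is not sharp — a localized empirical-process argument would instead recover the pointwise $n^{-1/3}$ rate uniformly, at considerably greater technical cost. A secondary point worth flagging is that, unlike the plain consistency in Lemma~\ref{lem:conshnint}, the displayed $n^{-1/4}$ rate genuinely uses that $h_0$ has a bounded derivative on $[0,a]$ (not merely continuity in the interior); and since the chord argument degenerates as $\eta$ is forced below $x$ near the endpoints, one has no choice but to fall back on monotonicity and the endpoint rates of Theorem~\ref{th:optimal_penalization} in the two boundary layers.
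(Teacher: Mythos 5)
Your proof is correct and follows essentially the same route as the paper's: a Marshall-type sandwich giving $\sup_{[0,a]}|\hat H_n-H_0|=O_P(n^{-1/2})$, a chord argument with window $n^{-1/4}$ on $[n^{-1/4},a-n^{-1/4}]$, and monotonicity combined with the $O_P(n^{-1/3})$ endpoint rates of Theorem \ref{th:optimal_penalization}(iii) on the two boundary layers. The only difference is that you make explicit the $O_P(n^{-1/2})$ bound on $\|\hat H_n-H_0\|_\infty$ that the paper absorbs into its term $T_n$ without comment.
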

\begin{proof} For $x\in[n^{-1/4},a-n^{-1/4}]$,
\begin{eqnarray*}
\hat{h}_n(x)&\le&\frac{\hat{H}_n(x+n^{-1/4})-\hat{H}_n(x)}{n^{-1/4}}\\&\le&n^{1/4}\left(\hat{H}_n(x+n^{-1/4})-{H}_0(x+n^{-1/4})-\hat{H}_n(x)+\hat{H}_0(x)\right)+\\
&&\,\,\,\,\,\,\,\,\,\,\,\,\,\,\,\,\,\,\,\,\,\,+n^{1/4}\left(H_0(x+n^{-1/4})-H_0(x)\right)\\&\le& 2n^{1/4}\sup_{[0,a]}|\hat{H}_n(y)-H_0(y)|+h_0(x)+n^{-1/4}\sup_{[0,a]}h_0^\prime(y)\\
&=&h_0(x)+T_n
\end{eqnarray*}
where $T_n=O_P(n^{-1/4})$, not depending on $x$. Similarly, $\hat{h}_n(x)-h_0(x)\ge-T_n$. This leads to the inequality $\sup_{[n^{-1/4},a-n^{-1/4}]}|\hat{h}_n(x)-h_0(x)|\le T_n$. For $x\in[0,n^{-1/4}]$, we have
\begin{eqnarray*}
\hat{h}_n(x)&\le&\hat{h}_n(n^{-1/4})-h_0(n^{-1/4})+h_0(n^{-1/4})-h_0(x)+h_0(x)\\&\le& h_0(x)+T_n+n^{-1/4}\sup_{[0,a]}h_0^\prime(y)
\end{eqnarray*}
and, using Theorem \ref{th:optimal_penalization} part (iii),
\begin{eqnarray*}
\hat{h}_n(x)&\ge&\hat{h}_n(0)-h_0(0)+h_0(0)-h_0(x)+h_0(x)\\&=&h_0(x)+O_P(n^{-1/3})-n^{-1/4}\sup_{[0,a]}h_0^\prime(y)
\end{eqnarray*}
leading to $\sup_{[0,n^{-1/4}]}|\hat{h}_n(x)-h_0(x)|=O_P(n^{-1/4})$. For $[a-n^{-1/4},a]$ the result can be derived in the same way. Combining the three rate results for the suprema leads to (\ref{eq:suprate}).
\end{proof}

\begin{figure}
\begin{center}
\includegraphics[width=6cm]{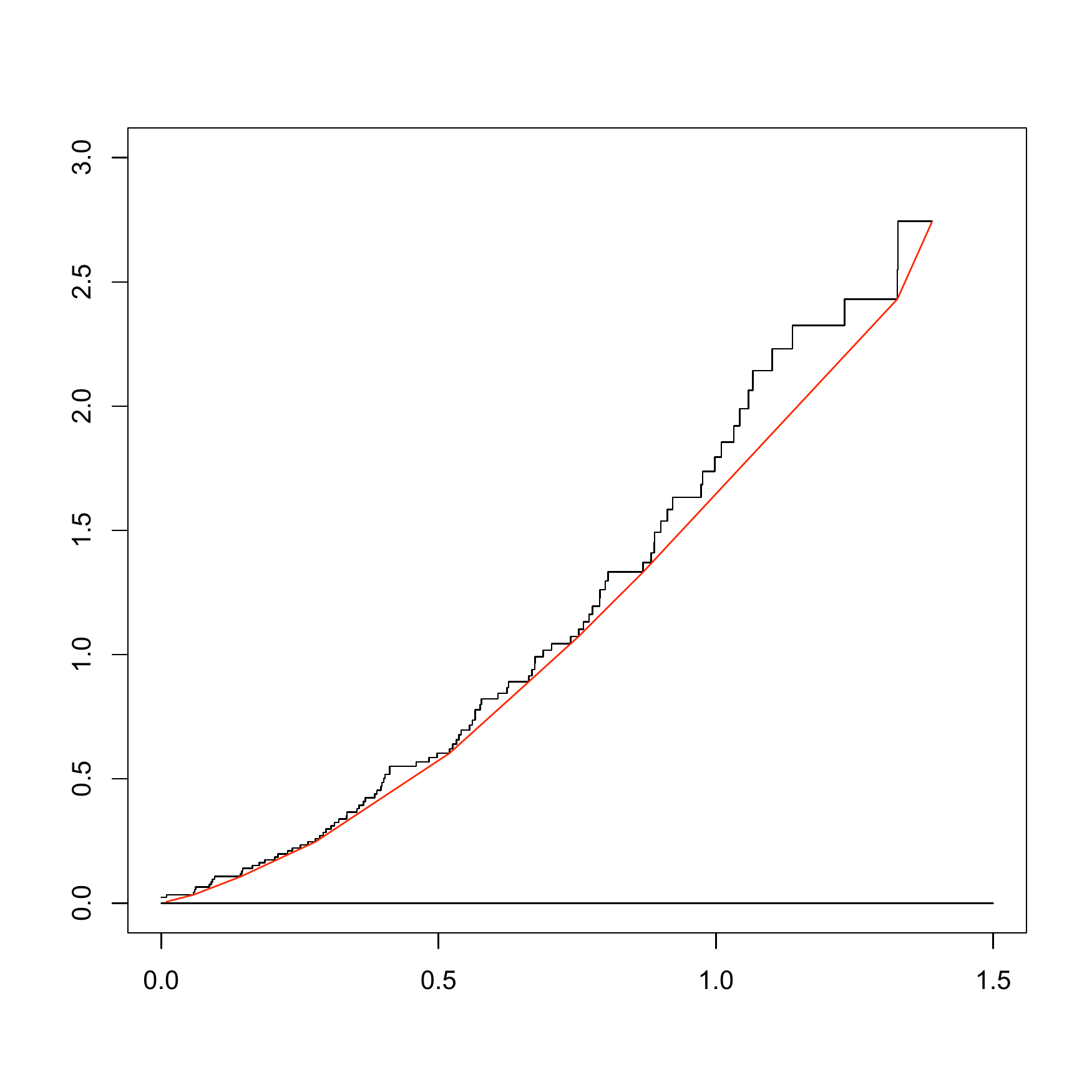}
\includegraphics[width=6cm]{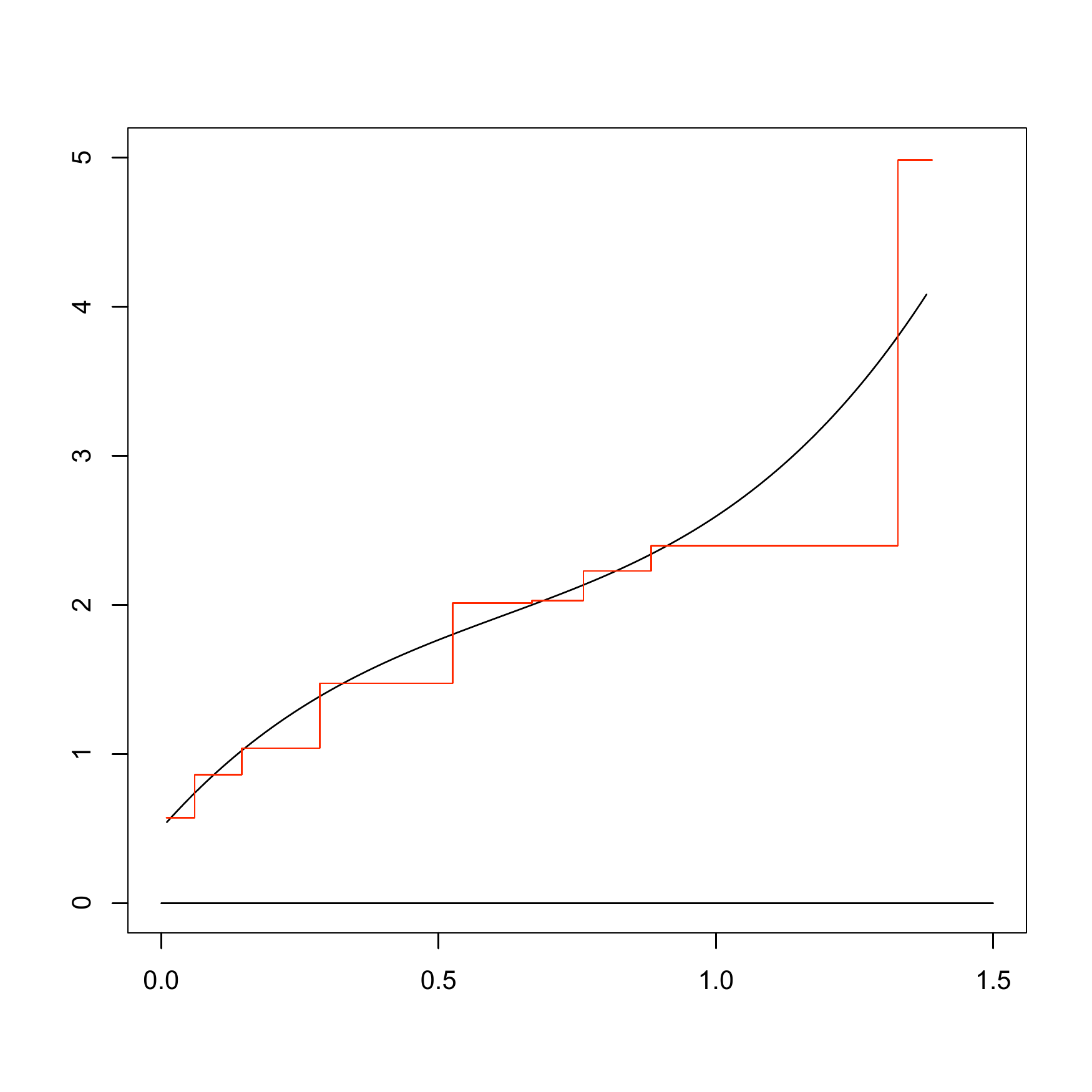}
\end{center}
\caption{The penalized cusum diagram and its greatest convex minorant (left panel) and the penalized least squares estimate of the hazard (right panel)  for a sample of size $n=100$ from the distribution function $F^{(1)}$ on the interval $[0,\left(F^{(1)}\right)^{-1}(0.95)]$. The real hazard is the black curve in the right panel.}
\label{fig:hazard_pen100}
\end{figure}

Taking $\a_n=n^{-2/3}$ and $\b_n=2n^{-2/3}$ we obtain Figure  \ref{fig:hazard_pen100}, for the same sample as used in Figure \ref{fig:hazard100_unpen}, where one notices that the value of $h(0)$ has gone up and the value of $h(a)$ has gone down.

\section{Monotone kernel estimates of the hazard}
\label{section:kernel_estimates}
Now suppose we have an initial (non-smooth) monotone estimate of the hazard $\hat h_n$ on $[0,a]$, like the least squares estimate of the hazard, obtained by minimizing (\ref{quad_criterion1}), or the penalized least squares isotonic estimator, obtained by minimizing (\ref{LS_criterion2}) under the restiction that $h$ is nondecreasing.
One way of constructing a smooth estimate of the hazard based on $\hat h_n$ is to use kernel smoothing. A kernel estimate with bandwidth $b>0$ of the hazard is given by:
\begin{equation}
\label{kernel_est_hazard}
\tilde h_n(x)=\int K_b(x-y)\,d\hat H_n(y)=\int K_b(x-y)\,\hat h_n(y)\,dy,\quad K_b(u)=b^{-1}K(u/b),
\end{equation}
where $K$ is a kernel with compact support, like the triweight kernel
$$
K(u)=\frac{35}{32}\left\{1-u^2\right\}^31_{[-1,1]}(u).
$$
Note that monotonicity of $\tilde{h}_n$  follows from monotonicity of $\hat h_n$. This property is not shared by the direct kernel estimator for $h_0$ that is obtained by taking the empirical cumulative hazard function $\H_n$ instead of $\hat H_n$ in (\ref{kernel_est_hazard}). Also the kernel estimators  considered in \cite{singpurwallawong:83}, which are  ratios of kernel estimators of the density $f_0$ and estimators of the survival function $1-F_0$, are not monotone in general. An alternative representation of our kernel estimate is
\begin{align*}
\tilde h_n(x)&=\int K_b(x-y)\,\int_{-\infty}^y d\hat h_n(u)\,dy
=\iint_{u<y} K_b(x-y)\,dy\,d\hat h_n(u)\\
&=\int_{u=-\infty}^{x+b}\int_{y=u}^{x+b}K_b(x-y)\,dy\,d\hat h_n(u)
=\int_{u=-\infty}^{x+b}\IK((x-u)/b)\,d\hat h_n(u)
\end{align*}
for $x\in[0,a]$, where
$$
\IK(u)=\int_{-\infty}^u K(w)\,dw=\left\{\begin{array}{lll}
0\,&,\,u< -1\\
\displaystyle{\int_{-1}^u K(w)\,dw}&,\,u\in[-1,1],\\
1\,&,\,u>1.
\end{array}
\right.
$$
Note that this yields:
$$
\tilde h_n'(x)=\widetilde {h_n'}(x)=\int K_b(x-y)\,d\hat h_n(y),
$$
so we also have an estimate of the derivative of the hazard.

Going in the other direction, we
have the following estimate of the cumulative hazard function:
$$
\tilde H_n(x)=b\int_{z\in[0,x+b]} \JK((x-z)/b)\,d\hat h_n(z),
$$
where
$$
\JK(u)=\int_{-\infty}^u\IK(v)\,dv=\left\{\begin{array}{lll}
0,\,&u< -1\\
\displaystyle{\int_{-1}^u (u-v) K(v)\,dv},\,&u\in[-1,1],\\
u,\,&u>1
\end{array}
\right.
$$
We now have:
\begin{align*}
&\tilde H_n(x)=\hat H_n(x-b)+b\int_{x-b}^{x+b} \JK((x-z)/b)\,d\hat h_n(z)\\
&=\hat H_n(x-b)+\int_{x-b}^{x+b} \hat h_n(z)\int_{-1}^{(x-z)/b} K(v)\,dv\,dz
=\int_{x-b}^{x+b} \hat H_n(z) K_b(x-z)\,dz.
\end{align*}

As the estimate for the density on $[0,a]$, we can take:
\begin{equation}
\label{density-est}
\tilde f_n(x)=\tilde h_n(x)\exp\left\{-\tilde H_n(x)\right\}.
\end{equation}

\begin{figure}[!ht]
\begin{center}
\includegraphics[width=4cm]{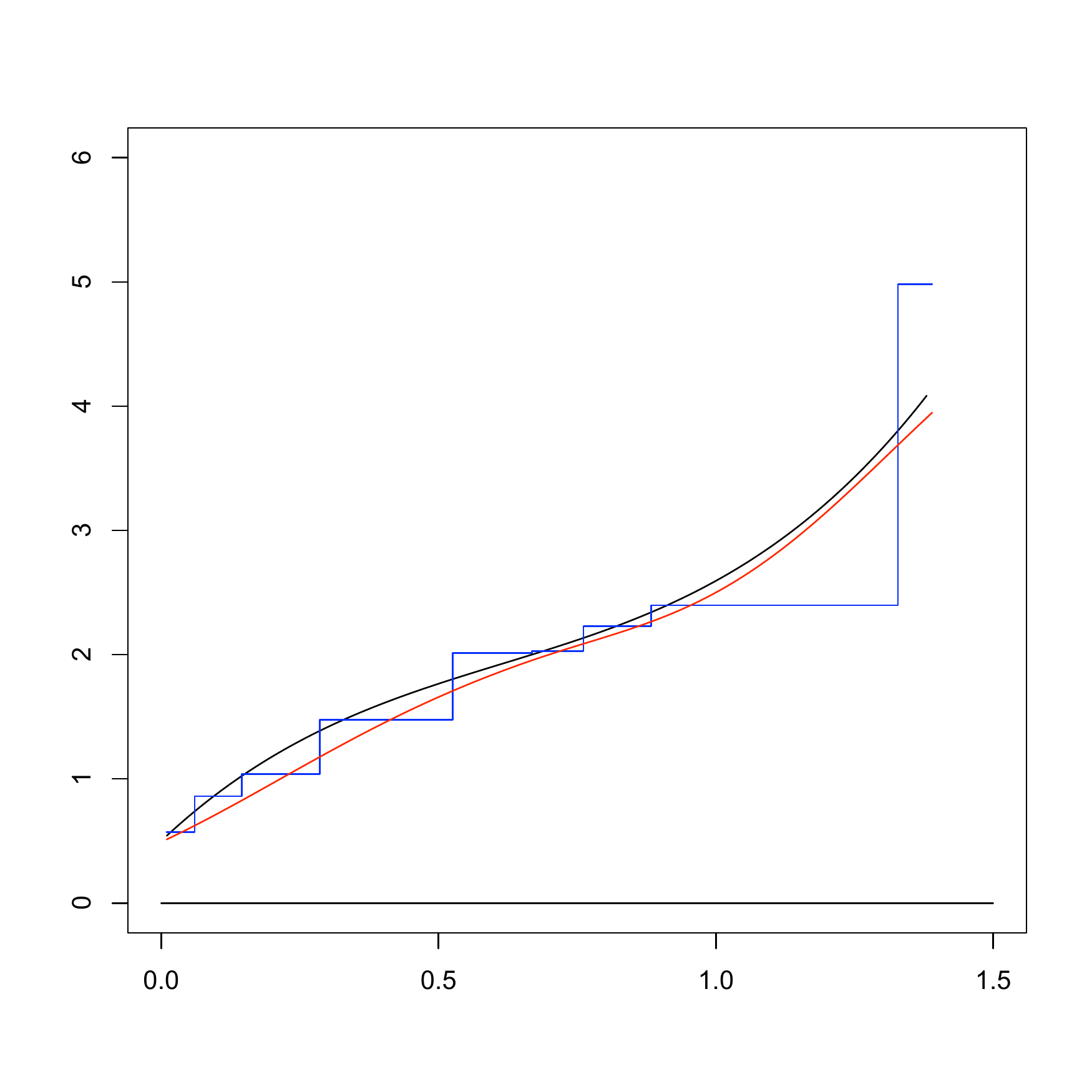}
\includegraphics[width=4cm]{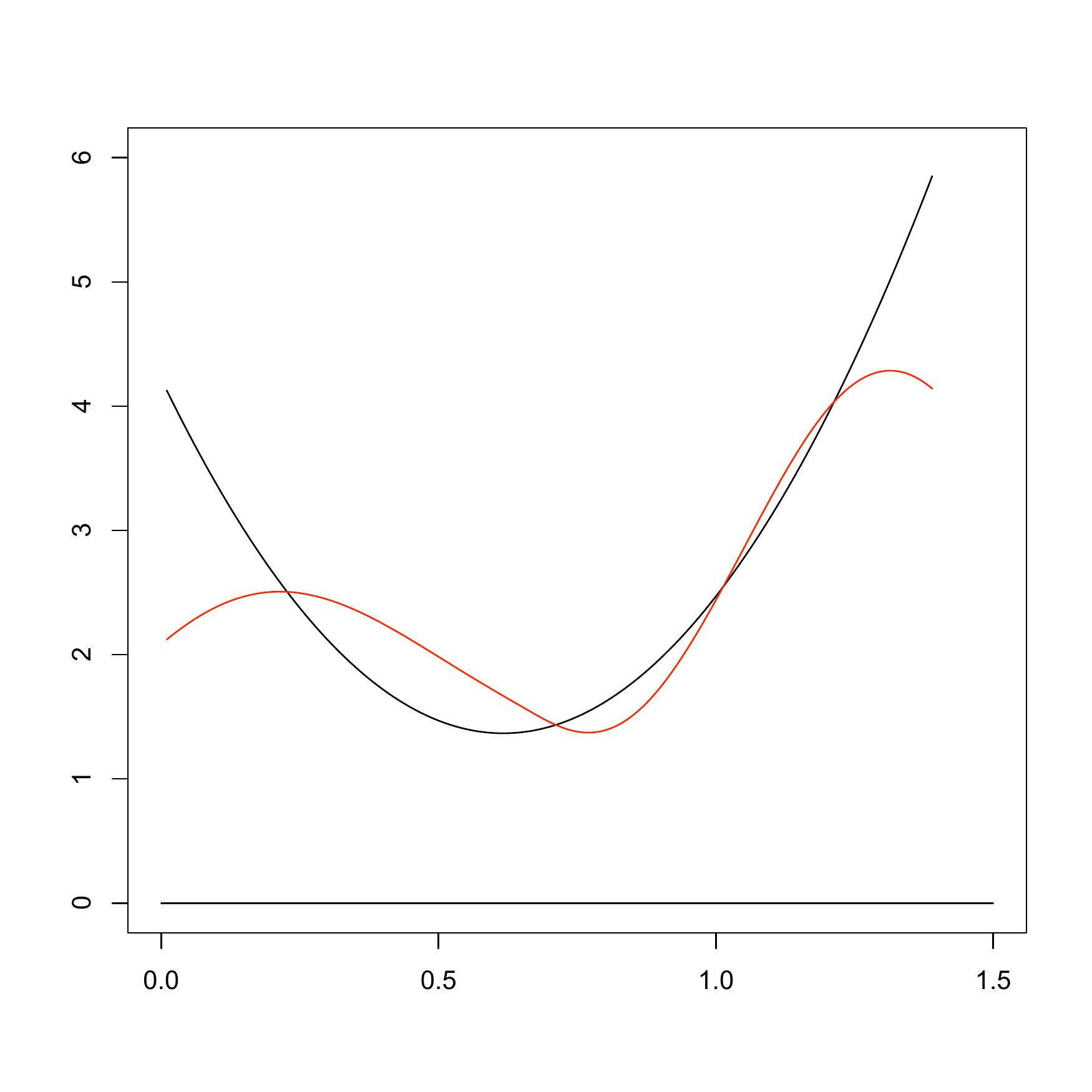}
\includegraphics[width=4cm]{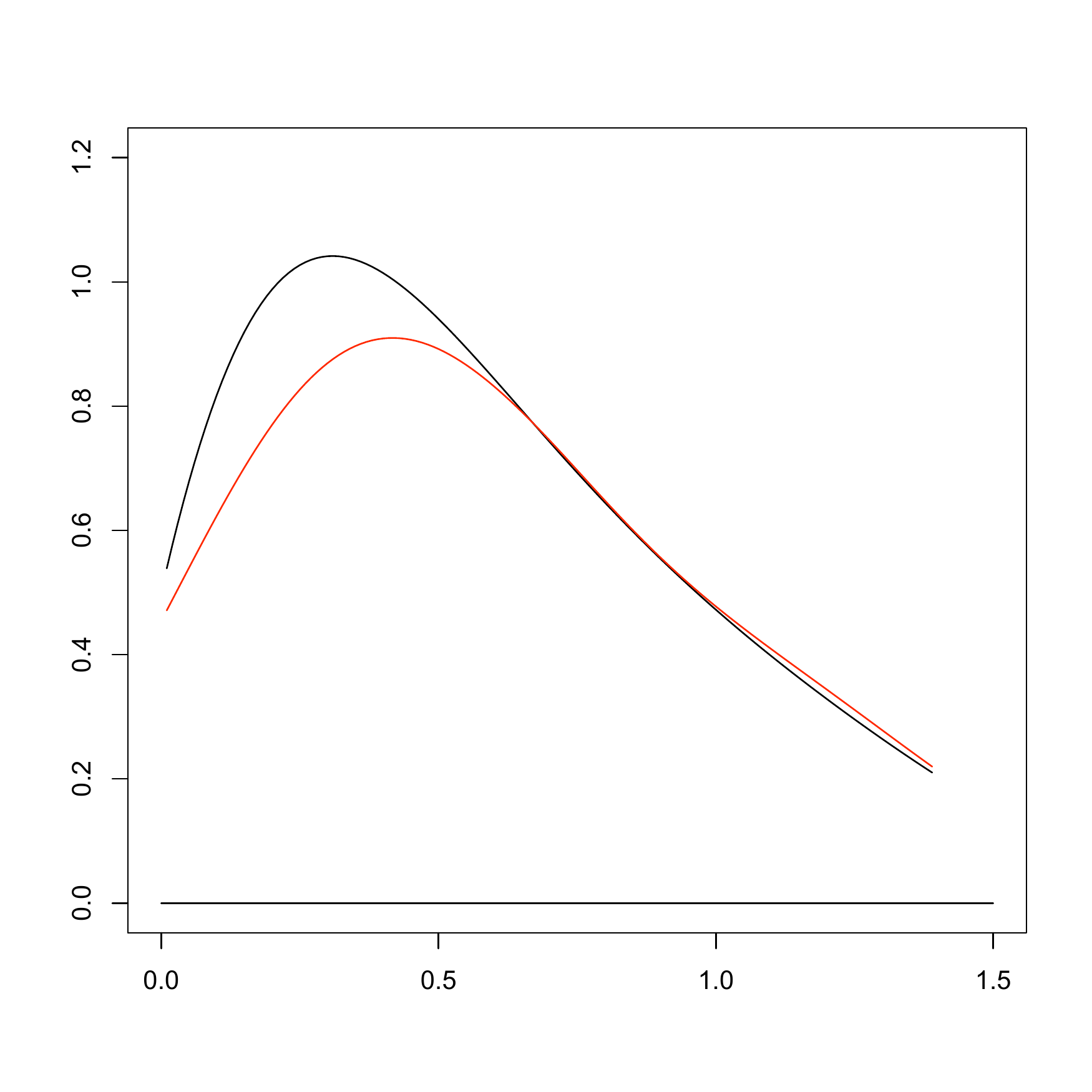}
\end{center}
\caption{The left picture shows the estimates $\hat h_n$ (blue) and $\tilde h_n$ (red) of the hazard $h^{(1)}$ (of the family $\{h^{(d)}:d\in[-1,1]\}$, black) for a sample of size $n=100$ on the $95\%$ percentile interval $[0,\left(F^{(1)}\right)^{-1}(0.95)]$. The middle and right picture show the corresponding derivatives of the hazard rates and  the corresponding densities.}
\label{fig:haz1000}
\end{figure}

\begin{figure}[!ht]
\begin{center}
\includegraphics[width=4cm]{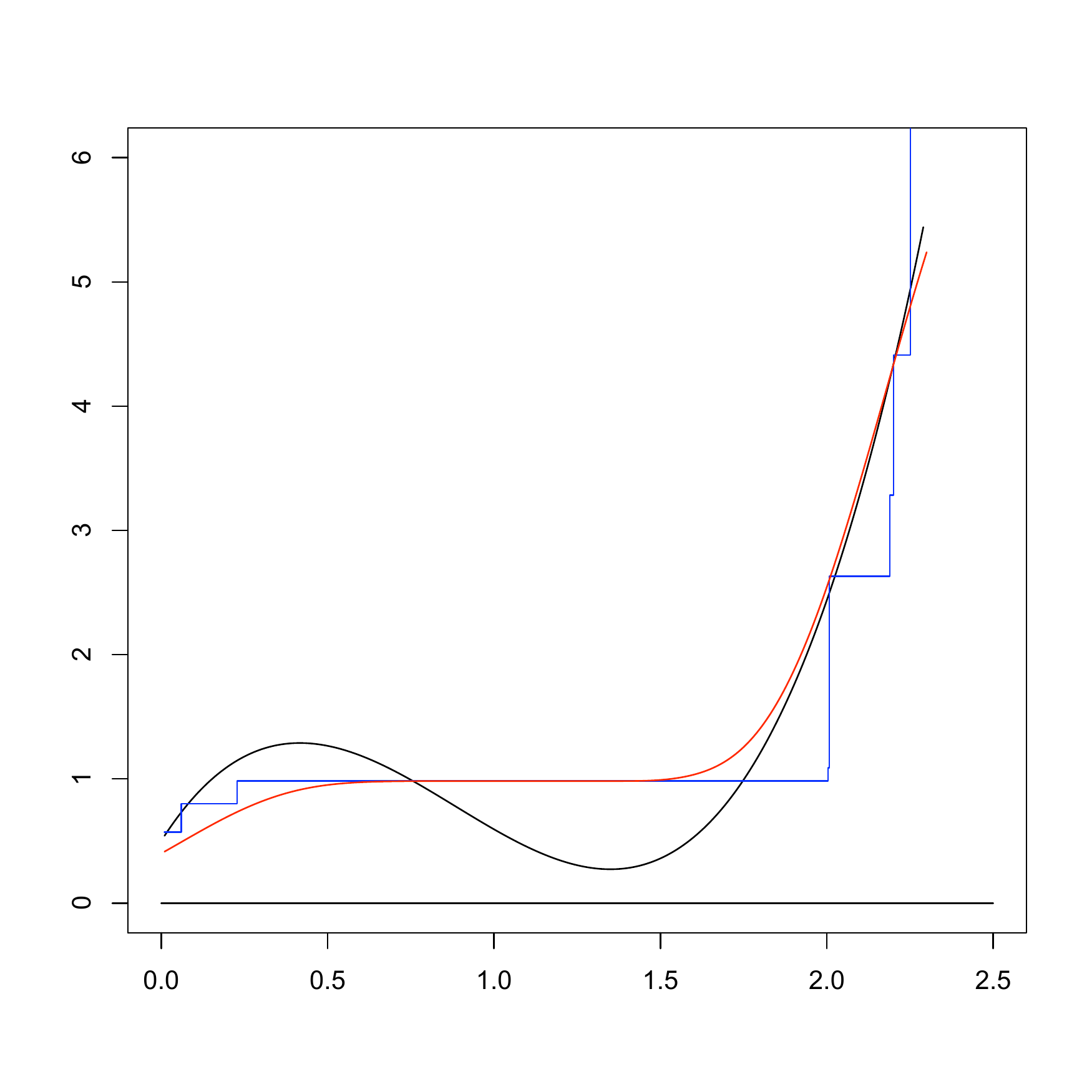}
\includegraphics[width=4cm]{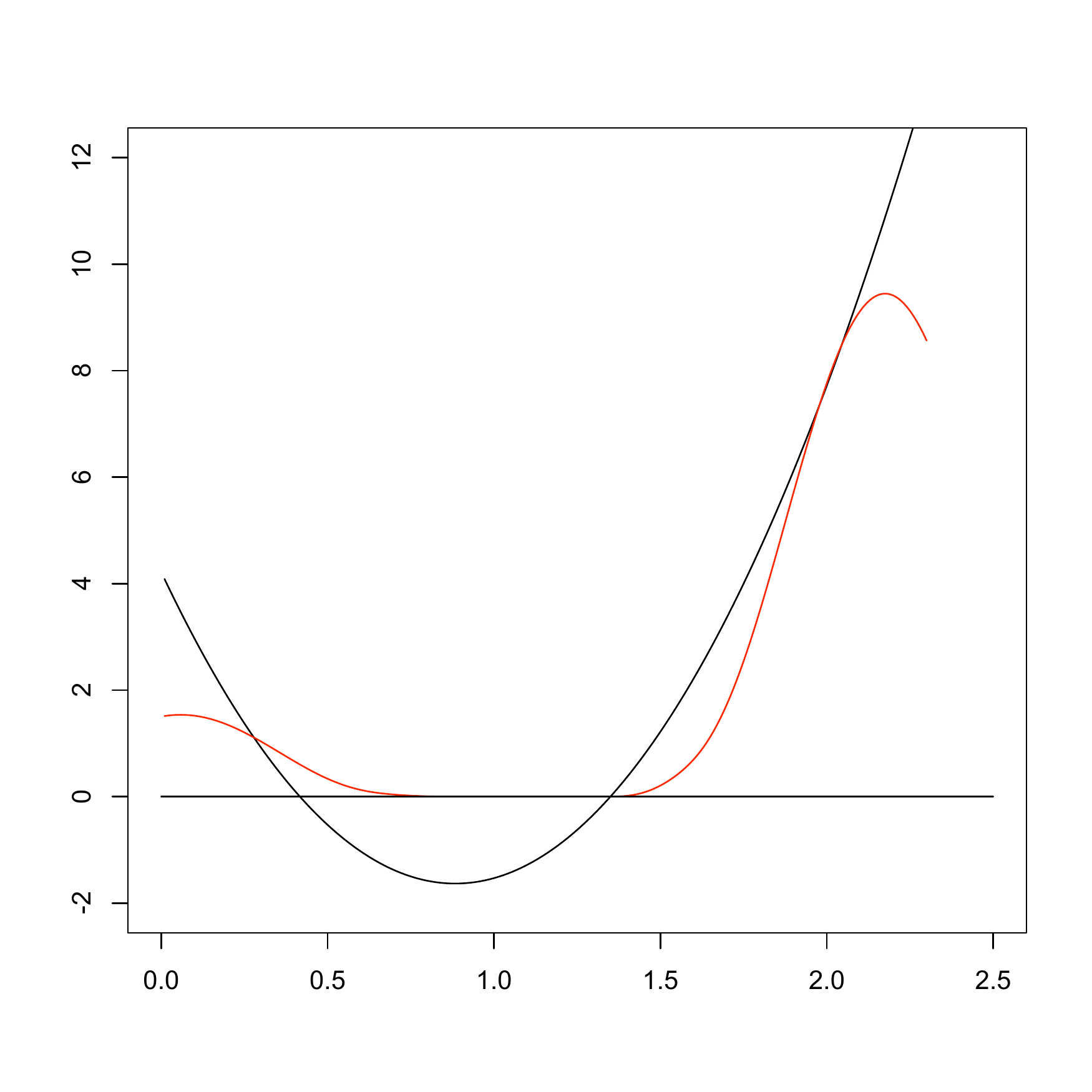}
\includegraphics[width=4cm]{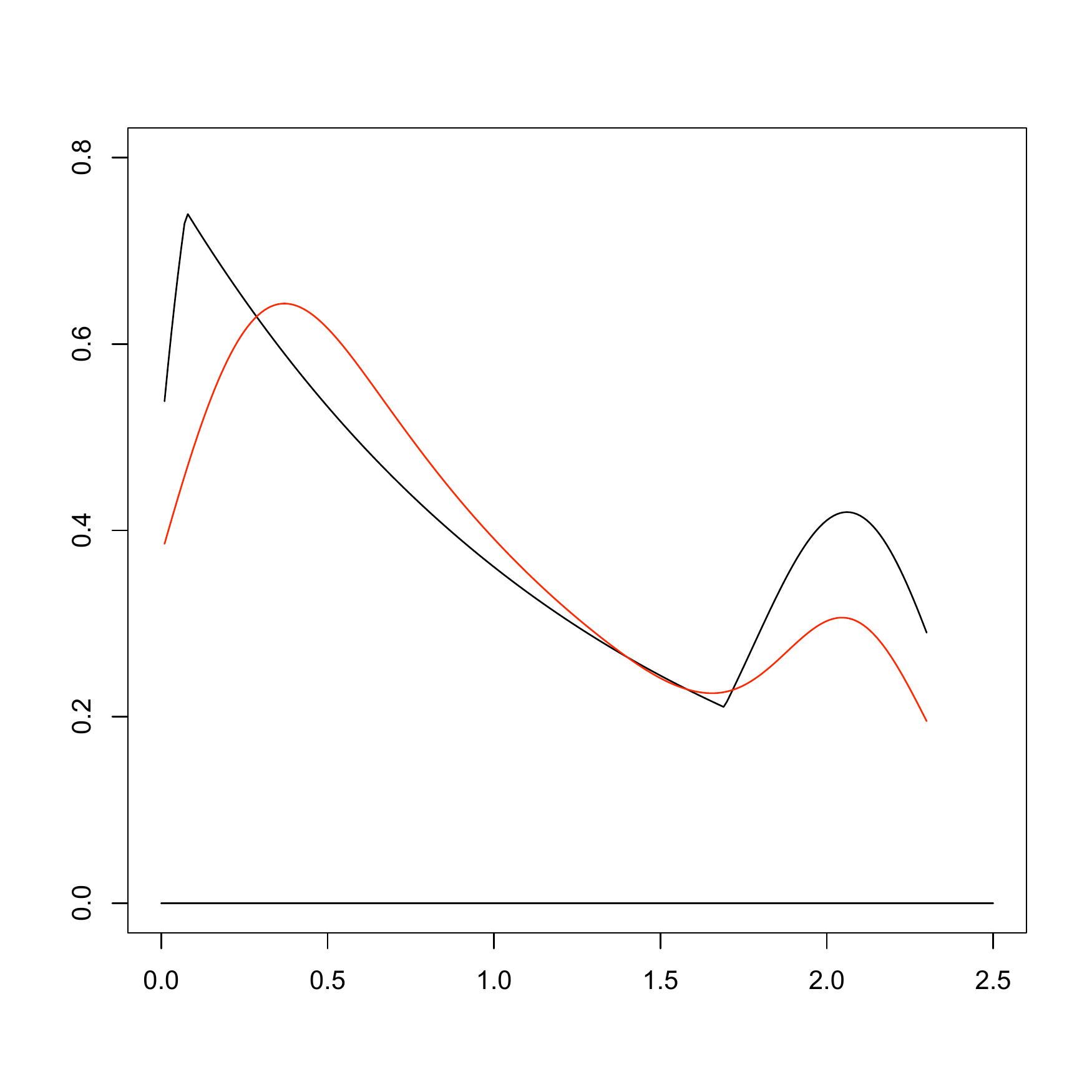}
\end{center}
\caption{From left to right the  isotonic estimates $\hat h_n$ (blue) and $\tilde h_n$ (red) of the hazard $h^{(-1)}$ (of the family $\{h^{(d)}:d\in[-1,1]\}$, black) for a sample of size $n=100$, and the real hazard $h^{(-1)}$ (black) on the $95\%$ percentile interval $[0,\left(F^{(-1)}\right)^{-1}(0.95)]$; the corresponding derivatives of the hazard rates and  the corresponding densities, where we compare in the right panel the estimate of the density with the density, obtained from the isotonic projection of the underlying hazard $h^{(1)}$. }
\label{fig:haz-1}
\end{figure}

For $x\in(0,a)$, we have the following asymptotic result for $\tilde{h}_n(x)$.
\begin{thm}
\label{th:kernel_est}
Let $\tilde h_n$ be the kernel estimate of the hazard function on $[0,a]$, defined by (\ref{kernel_est_hazard}).
Moreover, let $h_0$ be twice continuously differentiable, and let $h_0$ and $h_0'$ be both strictly positive on $[0,a]$, where $h_0'(0)$ and $h_0'(a)$ are defined as right and left derivatives, respectively. Then:
\begin{enumerate}
\item[(i)]
If we choose a bandwidth $b_n$ such that $n^{1/5} b_n\to \nu\in(0,\infty)$, as $n\to\infty$, we have for each $x\in(0,a)$:
\begin{align*}
n^{2/5}\left\{\tilde h_n(x)-h_0(x)\right\}
\stackrel{\cal D}\longrightarrow N\left(\m_0(\nu),\s_0^2(\nu)\right),
\end{align*}
where
\begin{equation}\label{eq:asmusig}
\m_0(\nu)=\tfrac12\nu^2h_0''(x)\int u^2 K(u)\,du,\qquad \s_0^2(\nu)=\frac{h_0(x)^2}{\nu f_0(x)}\int K(u)^2\,du.
\end{equation}
\item[(ii)]
The asymptotically locally optimal bandwidth is given by
\begin{equation}
\label{b_loc_opt}
b_{n,locopt}=b_{n,locopt}(x)=\left\{\frac{ h_0(x)^2\int K(u)^2\,du}{f_0(x) h_0''(x)^2\left\{\int u^2 K(u)\,du\right\}^2}\right\}^{1/5}n^{-1/5}.
\end{equation}
The bandwidth, minimizing the asymptotic global least squares criterion
$$
\frac1{\nu}\int_0^a\frac{h_0(x)^2}{f_0(x)}\,dx\int K(u)^2\,du+\tfrac14\nu^4\left\{\int u^2K(u)\,du\right\}^2\int_0^a h_0''(x)^2\,dx,
$$
is given by
\begin{equation}
\label{b_globopt}
\nu_{n,globopt}\cdot n^{-1/5}=\left\{\frac{\int_0^a h_0(x)^2/f_0(x)\,dx\int K(u)^2\,du}{\int_0^a h_0''(x)^2\,dx\left\{\int u^2 K(u)\,du\right\}^2}\right\}^{1/5}n^{-1/5}.
\end{equation}
\end{enumerate}
\end{thm}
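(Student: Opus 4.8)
The plan is to split $\tilde h_n(x)-h_0(x)$ into a deterministic bias part and a stochastic part. Write $h_n^*(x)=\int K_{b_n}(x-y)\,dH_0(y)=\int K(u)h_0(x-b_nu)\,du$. Since $h_0\in C^2([0,a])$ and, for the fixed interior point $x\in(0,a)$ and $n$ large, $[x-b_n,x+b_n]\subset(0,a)$, a second order Taylor expansion together with $\int K(u)\,du=1$ and $\int uK(u)\,du=0$ (symmetry of $K$) gives $h_n^*(x)-h_0(x)=\tfrac12 b_n^2h_0''(x)\int u^2K(u)\,du+o(b_n^2)$, so that $n^{2/5}\{h_n^*(x)-h_0(x)\}\to\m_0(\nu)$. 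It then suffices to prove $n^{2/5}\{\tilde h_n(x)-h_n^*(x)\}\Dconv N(0,\s_0^2(\nu))$, after which (i) follows from Slutsky's lemma.

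For the stochastic part, (\ref{kernel_est_hazard}) and integration by parts (legitimate because $K$ has compact support and vanishes at $\pm1$) give $\tilde h_n(x)-h_n^*(x)=\int K_{b_n}(x-y)\,d(\hat H_n-H_0)(y)=\int(\hat H_n-H_0)(y)K_{b_n}'(x-y)\,dy$. I would split $\hat H_n-H_0=(\hat H_n-\H_n)+(\H_n-H_0)$. The first term contributes at most $\sup_{[0,a]}|\hat H_n-\H_n|\cdot b_n^{-1}\int|K'(u)|\,du$ in absolute value, and here one needs a Kiefer--Wolfowitz type bound $\sup_{[0,a]}|\hat H_n-\H_n|=O_P(n^{-2/3}\log^{1/2}n)$: near the interior point the vertices of the greatest convex minorant are $O_P(n^{-1/3})$-dense (this is where the hypothesis $h_0'(x)>0$, i.e.\ strict convexity of $H_0$ at $x$, enters), and between consecutive vertices $\H_n-\hat H_n$ is controlled by the chord error of the smooth convex $H_0$ (of order $O_P(n^{-2/3})$) together with the modulus of continuity of $\sqrt n(\H_n-H_0)$ over an interval of length $O_P(n^{-1/3})$. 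Since $b_n\sim\nu n^{-1/5}$, this first term is then $O_P(n^{-7/15}\log^{1/2}n)=o_P(n^{-2/5})$. (If the penalized estimator of Section~\ref{section:isotonic_estimates} is used, the extra shifts $\a_n,\b_n=O(n^{-2/3})$ are absorbed harmlessly into this bound.)

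For the main term I would undo the integration by parts, writing $\int(\H_n-H_0)(y)K_{b_n}'(x-y)\,dy=\int K_{b_n}(x-y)\,d\H_n(y)-h_n^*(x)$, and replace the Nelson--Aalen-type increments of $\H_n$ by their population analogues: the jump of $\H_n$ at $X_{(i)}$ equals $\log\{(n-i+1)/(n-i)\}=(n-i)^{-1}+O((n-i)^{-2})$, and since $F_0(a)<1$ and $i/n=\F_n(X_{(i)})$ is uniformly within $O_P(\sqrt{\log n/n})$ of $F_0(X_{(i)})$, this yields $\int K_{b_n}(x-y)\,d\H_n(y)=n^{-1}\sum_{i=1}^n\{1-F_0(X_i)\}^{-1}K_{b_n}(x-X_i)+O_P(n^{-1/2}\log^{1/2}n)$, the error being $o_P(n^{-2/5})$. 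For the i.i.d.\ triangular array $Z_{n,i}=\{1-F_0(X_i)\}^{-1}K_{b_n}(x-X_i)$ one checks $\E Z_{n,1}=h_n^*(x)$ and, using $f_0/(1-F_0)^2=h_0/(1-F_0)=h_0^2/f_0$, $\mathrm{Var}(n^{-1}\sum_i Z_{n,i})=(nb_n)^{-1}\{h_0(x)^2/f_0(x)\int K(u)^2\,du+o(1)\}$; the Lyapunov ratio is of order $(nb_n)^{-1/2}=O(\nu^{-1/2}n^{-2/5})\to0$, so the Lyapunov central limit theorem gives $n^{2/5}\{\int K_{b_n}(x-y)\,d\H_n(y)-h_n^*(x)\}\Dconv N(0,\s_0^2(\nu))$. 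Combining the three pieces and invoking Slutsky's lemma proves (i).

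Part (ii) is then routine calculus. By (i) the asymptotic mean squared error of $\tilde h_n(x)$ with $b_n=\nu n^{-1/5}$ is $n^{-4/5}\{\m_0(\nu)^2+\s_0^2(\nu)\}$; minimizing $\nu\mapsto c_1\nu^4+c_2\nu^{-1}$ over $\nu>0$ (the minimizer solving $\nu^5=c_2/(4c_1)$) yields (\ref{b_loc_opt}). Integrating the leading squared bias and the leading variance over $x\in[0,a]$ produces the displayed global criterion, and the same one-variable minimization yields (\ref{b_globopt}). The one genuinely delicate ingredient is the Kiefer--Wolfowitz bound on $\sup_{[0,a]}|\hat H_n-\H_n|$ in the second paragraph; everything else is bias/variance bookkeeping and a standard triangular-array central limit theorem.
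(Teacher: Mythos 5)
Your proposal is correct, and for the key stochastic step it takes a genuinely different route from the paper. Both arguments share the same skeleton: the Taylor-expansion bias term, and the reduction of $\int K_{b_n}(x-y)\,d(\hat H_n-\H_n)(y)$ via integration by parts and a Kiefer--Wolfowitz-type bound $\sup_{[0,a]}|\hat H_n-\H_n|=O_P(n^{-2/3}\log n)$ (which the paper, like you, does not prove in full but attributes to the method of Pal and Woodroofe; your sketch of why it holds is a welcome addition, and your $\log^{1/2}n$ versus the paper's $\log n$ is immaterial since either is $o_P(n^{-2/5})$ after division by $b_n$). Where you diverge is in the central limit theorem for $\int K_{b_n}(x-y)\,d(\H_n-H_0)(y)$: the paper localizes, proves weak convergence of the process $\tilde W_n(u)=\sqrt{n/b_n}\{\H_n(x+b_nu)-\H_n(x)-H_0(x+b_nu)+H_0(x)\}$ to $h_0(x)f_0(x)^{-1/2}W(u)$ in $D([0,\infty))$ by expanding the logarithm, and then identifies the limit of $\nu^{-1/2}\int K(u)\,d\tilde W_n(u)$ as a Wiener integral; you instead linearize the jumps of $\H_n$ directly, obtaining the i.i.d.\ representation $n^{-1}\sum_i\{1-F_0(X_i)\}^{-1}K_{b_n}(x-X_i)$ up to an $O_P(n^{-1/2}\log^{1/2}n)=o_P(n^{-2/5})$ error, and apply the Lyapunov CLT to the triangular array. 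Your variance computation $f_0/(1-F_0)^2=h_0^2/f_0$ recovers exactly $\s_0^2(\nu)$, and the Lyapunov ratio $(nb_n)^{-1/2}\to0$ is verified, so the argument closes. Your route is more elementary and self-contained for the fixed-$x$ statement; the paper's functional-limit route would be the one to keep if one later wants joint or process-level asymptotics in the local variable. Part (ii) is the same routine calculus in both.
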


\noindent
\begin{proof}
(i): We get:
\begin{align*}
&\int K_{b_n}(x-y)\,d\hat H_n(y)
=\int K_{b_n}(x-y)\,d\H_n(y)+\int K_{b_n}(x-y)\,d\left(\hat H_n-\H_n\right)(y)\\
&=\int K_{b_n}(x-y)\,d\H_n(y)+\int\left\{\hat H_n(y)-\H_n(y)\right\}\,\frac1{b_n^2}K'((x-y)/b_n)\,dy\\
&=\int K_{b_n}(x-y)\,d\H_n(y)+\frac1{b_n}\int\left\{\hat H_n(x-b_nu)-\H_n(x-b_nu)\right\}\,K'(u)\,dy\\
&=\int K_{b_n}(x-y)\,d\H_n(y)+O_P\left(n^{-7/15}\log n\right),
\end{align*}
where we use that
$$
\sup_{x\in[0,a]}\left|\hat H_n(x)-\H_n(x)\right|=O_P\left(n^{-2/3}\log n\right).
$$
This result is related to that in \cite{kiefwolf:76} for the concave majorant of the empirical distribution based on a sample from a concave distribution function. It can be proved along the lines of  \cite{palwood:06}.
Moreover,
\begin{align*}
&\int K_{b_n}(x-y)\,d\H_n(y)=\int K_{b_n}(x-y)\,dH_0(y)+\int K_{b_n}(x-y)\,d\left(\H_n-H_0\right)(y).
\end{align*}
Define
\begin{eqnarray*}
\tilde{W}_n(u)&=&\sqrt{n/b_n}\left(\H_n(x+b_nu)-\H_n(x)-H_0(x+b_nu)+H_0(x)\right)\\
&=&\sqrt{n/b_n}\left\{-\log\left(\frac{1-\F_n(x+b_nu)}{1-\F_n(x)}\right)+\log\left(\frac{1-F_0(x+b_nu)}{1-F_0(x)}\right)\right\}\\
&=&\sqrt{n/b_n}\left\{-\log\left(1-\frac{\F_n(x+b_nu)-\F_n(x)}{1-\F_n(x)}\right)+\right.\\&&\,\,\,\,\,\,\,\,\left.+\log\left(1-\frac{F_0(x+b_nu)-F_0(x)}{1-F_0(x)}\right)\right\}\\
&=&\sqrt{n/b_n}\left\{-\log\left(1-\frac{T_n(u)}{1-\F_n(x)}\right)+\log\left(1-\frac{t_n(u)}{1-F_0(x)}\right)\right\}\\
&=&\sqrt{n/b_n}\left\{\frac{T_n(u)}{1-\F_n(x)}-\frac{t_n(u)}{1-F_0(x)}\right\}+o_P(1)=\sqrt{n/b_n}\frac{T_n(u)-t_n(u)}{1-F_0(x)}+\\ &&\,\,\,\,+\sqrt{n/b_n}\left(T_n(u)-t_n(u)\right)\frac{\F_n(x)-F_0(x)}{(1-F_0(x))(1-\F_n(x))}+\\
&&\,\,\,\,+\sqrt{n/b_n}t_n(u)\frac{\F_n(x)-F_0(x)}{(1-F_0(x))(1-\F_n(x))}+o_P(1)\\
&=&\sqrt{n/b_n}\frac{T_n(u)-t_n(u)}{1-F_0(x)}+o_P(1)
\end{eqnarray*}
where the order terms are uniform for $u$ in compact sets. Using that
\begin{eqnarray*}
\sqrt{n/b_n}\left(T_n(u)-t_n(u)\right)&=&\int b_n^{-1/2}\left(1_{[0,x+b_nu]}(y)-1_{[0,x]}(y)\right)\,d\sqrt{n}(\F_n-F_0)(y)\\
&\Dconv&\sqrt{f_0(x)}W(u),
\end{eqnarray*}
where $W$ is standard two-sided Brownian motion on $\R$, we obtain
$$
\tilde{W}_n(u)\stackrel{\cal D}\longrightarrow\frac{\sqrt{f_0(x)}}{1-F_0(x)}W(u)\stackrel{\cal D}=\frac{h_0(x)}{\sqrt{f_0(x)}}W(u).
$$
Take $b_n=\nu n^{-1/5}$ and note that
\begin{eqnarray*}
&&n^{2/5}\int K_{b_n}(x-y)\,d\left(\H_n-H_0\right)(y)=n^{2/5}b_n^{-1}\int K\left(\frac{x-y}{b_n}\right)\,d\left(\H_n-H_0\right)(y)\\
&&\,\,\,\,=n^{2/5}b_n^{-1}\int K\left(u\right)\,d\left(\H_n-H_0\right)(x+b_nu)=\nu^{-1/2}\int K\left(u\right)\,d\tilde{W}_n(u)\\
&&\,\,\,\,\stackrel{\cal D}\longrightarrow\frac{h_0(x)}{\sqrt{\nu f_0(x)}}\int K(u)\,dW(u)\stackrel{\cal D}=N\left(0,\frac{h_0(x)^2}{\nu f_0(x)}\int K(u)^2\,du\right).
\end{eqnarray*}
The asymptotic bias is given by
\begin{eqnarray*}
n^{2/5}\int K_b(x-y) h_0(y)\,dy-h_0(x)
&=&n^{2/5}\int K(u)\left\{h_0(x+b_nu)-h_0(x)\right\}\,du\\
&\sim&\tfrac12h_0''(x) \nu^2\int u^2K(u)\,du.
\end{eqnarray*}
So we obtain
\begin{align*}
n^{2/5}\left\{\tilde h_n(x)-h_0(x)\right\}
\stackrel{\cal D}\longrightarrow N\left(\m_0(\nu),\s_0^2(\nu)\right),
\end{align*}
where $\m_0(\nu)$ and $\s_0^2(\nu)$ are given in (\ref{eq:asmusig}).
The last two statements of the theorem follow easily by setting the derivative with respect to  $\nu$ equal to zero in, respectively, the local and global criterion.\end{proof}

\vspace{0.3cm}
Pictures for $n=100$ of  $\tilde h_n$, its derivative $\tilde h_n'$ and the density $\tilde f_n$ for the corresponding functions at the right end of the family (\ref{F_d})
are shown in Figure \ref{fig:haz1000}, where the globally optimal bandwidth for the hazard, given in (\ref{b_globopt}) is used. The same pictures for the left end of the family (where the hazard is not monotone), are shown in Figure \ref{fig:haz-1}.

For purposes of bootstrapping of the test statistics in \cite{GrJo10c}, a crucial feature is that the estimate of the derivative of the hazard stays away from zero, also at the boundary points. This behavior  can be shown under the hypotheses of Theorem \ref{th:kernel_est}, even at the boundary points. To obtain a consistent estimate of the derivative at the boundary points, one could  introduce a boundary kernel. For example, near the left boundary point one could take:
\begin{align*}
&\widetilde{h_n'}(x)=\a(x/b)\int K_b(x-y)\,d\hat h_n(y)+\b(x/b)\int \frac{x-y}{b}K_b(x-y)\,d\hat h_n(y)\\ & \mbox{ where }K_b(u)=b^{-1}K(u/b),
\end{align*}
and  $\a(x/b)$ and $\b(x/b)$ are chosen in such a way that, if $y\in[0,1]$,
\begin{align*}
&\a(y)\int_{-1}^y K(u)\,du+\b(y)\int_{-1}^y uK(u)\,du=1 \mbox{ and }\\
&\a(y)\int_{-1}^y uK(u)\,du+\b(y)\int_{-1}^y u^2K(u)\,du=0,
\end{align*}
and where $\a(y)=1,\,\b(y)=0$, if $y>1$.
This will indeed lead to consistent estimates of the derivative of the hazard at the boundary, but the disadvantage is that the relation between $\tilde h_n$ and its derivative via derivatives and integrals of the kernel, which we used above, is lost.
In generating the bootstrap samples in \cite{GrJo10c}, using a kernel estimate of the hazard, boundary kernels were not used for estimating the derivative at the boundary, since it did not lead to significantly different results, and destroyed the simple relation between the hazard and its derivative via the kernel.

\section{Smooth estimates of the hazard, based on penalization}
\label{section:penalization_estimates}
Another approach to obtain a smooth monotone estimate of the hazard is that of penalized least squares; see e.g.\
 \cite{tantwood:94} and \cite{palwood:07}. Let $\lambda\ge0$ be a penalty parameter and define the smooth penalized local least squares estimator of $h$ on $[0,a]$ as minimizer of
\begin{equation}
\label{penalized_criterion}
\Phi_{\lambda}(h)=\int_{0}^a\left(h(x)-\hat h_n(x)\right)^2\,dx+\lambda\int_0^a h^{\prime}(x)^2\,dx
\end{equation}
over the set of differentiable functions $h$ on $[0,a]$, where $\hat h_n$ is the monotone (on [0,a]) piecewise constant estimate that minimizes (\ref{LS_criterion2}) in section \ref{section:isotonic_estimates}. Our first lemma gives the minimizer of $\Phi_{\lambda}$ over the class of smooth functions on $[0,a]$ under boundary constraints at  $0$ and $a$.

\begin{lemma}
\label{lem:charvar}
Let $\kappa_1,\kappa_2\in\R$. Then the unique minimizer of $\Phi_{\lambda}$ over all smooth functions on $[0,a]$ such that $h(0)=\kappa_1$ and $h(a)=\kappa_2$ exists and is given by
\begin{equation}
\label{eq:partsol}
h(x)=h_1(x)+c_1e^{-x/\sqrt{\lambda}}+c_2e^{-(a-x)/\sqrt{\lambda}}
\end{equation}
where
\begin{equation}
\label{eq:h1}
h_1(x)=\tfrac12\lambda^{-1/2}\int_0^a e^{-|y-x|/\sqrt{\lambda}}\hat h_n(y)\,dy,
\end{equation}
and $c_1$ and $c_2$ are chosen such that $h$ satisfies the imposed boundary constraints.
\end{lemma}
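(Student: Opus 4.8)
The plan is to treat the minimization of $\Phi_{\lambda}$ as a classical calculus-of-variations problem, derive the Euler--Lagrange equation, solve it explicitly, and then promote the stationary point to the unique global minimizer by strict convexity. First I would compute the first variation: for any admissible $h$ and any perturbation $\eta$ with $\eta(0)=\eta(a)=0$,
$$
\frac{d}{d\epsilon}\,\Phi_{\lambda}(h+\epsilon\eta)\Big|_{\epsilon=0}
=2\int_0^a\left\{\bigl(h(x)-\hat h_n(x)\bigr)\,\eta(x)+\lambda\,h'(x)\,\eta'(x)\right\}\,dx .
$$
Vanishing of this for all such $\eta$ forces (via the du~Bois-Reymond lemma, so that no a~priori $C^2$-regularity is needed) the equation $\lambda h''(x)-h(x)=-\hat h_n(x)$ on $[0,a]$, holding a.e.\ since $\hat h_n$ is piecewise constant. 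The homogeneous equation $\lambda h''=h$ has the two-dimensional solution space spanned by $e^{-x/\sqrt{\lambda}}$ and $e^{-(a-x)/\sqrt{\lambda}}$, which is exactly the free part of (\ref{eq:partsol}).

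Second, I would verify that $h_1$ in (\ref{eq:h1}) is a particular solution, i.e.\ that $(x,y)\mapsto\tfrac12\lambda^{-1/2}e^{-|x-y|/\sqrt{\lambda}}$ is the free-space Green's function of $h\mapsto h-\lambda h''$: away from $x=y$ it solves the homogeneous equation, it is continuous at $x=y$, and the jump of its $x$-derivative there equals $-1/\lambda$, so integrating against $\hat h_n$ gives $h_1-\lambda h_1''=\hat h_n$ a.e. Hence every solution of the inhomogeneous ODE has the form (\ref{eq:partsol}); imposing $h(0)=\kappa_1$, $h(a)=\kappa_2$ yields a $2\times2$ linear system for $(c_1,c_2)$ whose determinant is $1-e^{-2a/\sqrt{\lambda}}\neq0$, so $c_1,c_2$ exist and are uniquely determined, giving a well-defined candidate $h^\star$.

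Third, to show $h^\star$ is the unique minimizer (and in particular that a minimizer exists), I would write any admissible $h$ as $h=h^\star+\eta$ with $\eta(0)=\eta(a)=0$ and expand the quadratic functional:
$$
\Phi_{\lambda}(h)=\Phi_{\lambda}(h^\star)
+2\int_0^a\left\{(h^\star-\hat h_n)\eta+\lambda (h^\star)'\eta'\right\}dx
+\int_0^a\left\{\eta^2+\lambda(\eta')^2\right\}dx .
$$
The cross term vanishes because $h^\star$ satisfies the Euler--Lagrange equation (same integration by parts as above, boundary terms killed by $\eta(0)=\eta(a)=0$), and the last term is nonnegative and is zero only if $\eta\equiv0$. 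Hence $\Phi_{\lambda}(h)>\Phi_{\lambda}(h^\star)$ for $h\neq h^\star$.

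The computations above are routine; the only genuine subtlety is one of function spaces. Since $\hat h_n$ is piecewise constant, $h^\star$ is $C^1$ but not $C^2$ (its second derivative jumps at the jump points of $\hat h_n$), so "smooth" in the statement must be read as differentiable with square-integrable derivative, i.e.\ the natural class $H^1([0,a])$ on which $\Phi_{\lambda}$ is finite and on which the integration-by-parts identities above are valid; $h^\star$ belongs to this class. I expect that pinning down this regularity point — and noting that restricting to genuinely $C^\infty$ functions gives the same infimum $\Phi_{\lambda}(h^\star)$ by density, even though it is then attained only in the limit — is the part that needs a careful word, while the explicit Green's-function solution of the linear ODE is the substance of the proof.
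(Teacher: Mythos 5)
Your proposal is correct and follows essentially the same route as the paper's proof: derive the Euler equation $\lambda h''(x)=h(x)-\hat h_n(x)$, recognize $h_1$ as a particular (Green's-function) solution, add the homogeneous exponentials $e^{-x/\sqrt{\lambda}}$ and $e^{-(a-x)/\sqrt{\lambda}}$, and fix $c_1,c_2$ from the boundary conditions via the nonsingular $2\times2$ system with determinant $1-e^{-2a/\sqrt{\lambda}}$. You in fact supply details the paper leaves implicit -- the du Bois-Reymond weak formulation, the verification of the Green's-function jump, and the strict-convexity expansion showing the stationary point is the unique global minimizer -- so your argument is, if anything, more complete than the published one.
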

\begin{proof}
Writing
$$
I(h)=\int_0^a G(x,h,h')\,dx=\int_0^a \bigl\{h(x)-\hat h_n(x)\bigr\}^2\,dx+\l \int_0^a h'(x)^2\,dx,
$$
we get Euler's differential equation
$$
G_h-\frac{d}{dx} G_{h'}=0,
$$
we wish to solve under under the boundary conditions $h(0)=\kappa_1$ and $h(a)=\kappa_2$. This results in the second order integral equation
\begin{equation}
\label{2nd_order_eq}
h''(x)=\l^{-1}\bigl\{h(x)-\hat h_n(x)\bigr\}
\end{equation}
with boundary constraints.

A particular solution to (\ref{2nd_order_eq})  is given by (\ref{eq:h1}).
Adding the solutions to the homogeneous equation multiplied by constants $c_1$ and $c_2$ respectively, the unique solution to the boundary value problem is obtained by choosing $c_1$ and $c_2$ appropriately in (\ref{eq:partsol}).
\end{proof}

\begin{remark}\rm
Observe that $h_1$ in  (\ref{eq:h1}) can be viewed as a kernel-smoothed version of $\hat h_n$ in the sense of section \ref{section:kernel_estimates}, with kernel function $K(x)=\tfrac12\exp(-|x|)$ and bandwidth $b=\sqrt{\lambda}$. In particular this shows $h_1$ to be monotone. Moreover, for $\lambda\downarrow0$ and $c_1,c_2$ bounded as $\lambda\downarrow0$,  $h$ defined in (\ref{eq:partsol}) is merely a boundary-corrected version of $h_1$. In that case the asymptotic behavior of $h$ on closed intervals excluding the boundary points $0$ and $a$ is completely determined by that of $h_1$.
\end{remark}

As an immediate consequence of Lemma \ref{lem:charvar}, the minimizer of $\Phi_{\l}$ without boundary restrictions can be identified, as well as the minimizer under the natural boundary constraints  $h(0)=\hat{h}_n(0)$ and $h(a)=\hat{h}_n(a)$. This latter boundary constraints are natural in view of the consistency of $\hat{h}_n$ at $0$ and $a$.

\begin{corollary}
\label{lem:variation}
The unique minimizer $\check{h}_n$ of $\Phi_{\lambda}$ over all smooth functions on $[0,a]$ exists is given by (\ref{eq:partsol})  with $c_1$ equal to
\begin{equation}
\label{eq:c1}
\check{c}_1=\frac{\int_0^a \left(\hat h_n(x)-h_1(x)+\sqrt{\l}h_1^\prime(x)\right) e^{-x/\sqrt{\l}}\,dx}{\sqrt{\l}\left\{1-e^{-2a/\sqrt{\l}}\right\}}.
\end{equation}
and $c_2$ equal to
\begin{equation}
\label{eq:c2}
\check{c}_2=\frac{\int_0^a\left(\hat h_n(x)-h_1(x)-\sqrt{\l}h_1^\prime(x)\right)e^{-(a-x)/\sqrt{\l}}\,dx}{\sqrt{\l}\left\{1-e^{-2a/\sqrt{\l}}\right\}}.
\end{equation}
The minimizer $\bar{h}_n$ of $\Phi_{\lambda}$ under the boundary constraints $h(0)=\hat{h}_n(0)$ and $h(a)=\hat{h}_n(a)$ is given by (\ref{eq:partsol}) with
\begin{equation}
\label{eq:c1_2}
c_1=\bar{c}_1
=\frac{\hat h_n(0)-h_1(0)-\bigl\{\hat h_n(a)-h_1(a)\bigr\}e^{- a/\sqrt{\lambda}}}{1-e^{-2a/\sqrt{\lambda}}}\,,
\end{equation}
and
\begin{equation}
\label{eq:c2_2}
c_2=\bar{c}_2
=\frac{\hat h_n(a)-h_1(a)-\bigl\{\hat h_n(0)-h_1(0)\bigr\}e^{-a/\sqrt{\lambda}}}{1-e^{-2a/\sqrt{\lambda}}}\,.
\end{equation}
\end{corollary}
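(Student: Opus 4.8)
The plan is to read off both minimizers from Lemma~\ref{lem:charvar}. Since $\Phi_{\lambda}$ is a strictly convex quadratic functional in $h$ (the fidelity term $\int_0^a(h-\hat h_n)^2$ alone is already strictly convex, and $\lambda\int_0^a(h')^2\ge0$ is convex), it has at most one stationary point, and any stationary point is the global minimizer; so in each case it suffices to exhibit one. For $\bar h_n$ this is immediate: $\bar h_n$ minimizes $\Phi_{\lambda}$ over the affine class $\{h:h(0)=\hat h_n(0),\,h(a)=\hat h_n(a)\}$, which is exactly the situation of Lemma~\ref{lem:charvar} with $\kappa_1=\hat h_n(0)$, $\kappa_2=\hat h_n(a)$, hence $\bar h_n$ has the form (\ref{eq:partsol}) and we only have to pin down $c_1,c_2$. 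For $\check h_n$ the argument is the same except that the side conditions are the \emph{natural} boundary conditions of the unconstrained problem.

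For $\bar h_n$: substituting (\ref{eq:partsol}) into $h(0)=\hat h_n(0)$ and $h(a)=\hat h_n(a)$ and using $e^0=1$ gives the linear system
$$
c_1+c_2e^{-a/\sqrt{\lambda}}=\hat h_n(0)-h_1(0),\qquad c_1e^{-a/\sqrt{\lambda}}+c_2=\hat h_n(a)-h_1(a),
$$
with determinant $1-e^{-2a/\sqrt{\lambda}}\neq0$. Cramer's rule yields (\ref{eq:c1_2}) and (\ref{eq:c2_2}); this case is pure linear algebra.

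For $\check h_n$: the first variation of $\Phi_{\lambda}$ produces, besides Euler's equation (\ref{2nd_order_eq}), the boundary term $2\lambda\bigl[h'(x)\,\delta h(x)\bigr]_0^a$, and since $\delta h(0)$ and $\delta h(a)$ are now unconstrained this forces the natural boundary conditions $h'(0)=h'(a)=0$. Differentiating (\ref{eq:partsol}) and imposing these two conditions gives another $2\times2$ system for $(c_1,c_2)$, again with determinant $1-e^{-2a/\sqrt{\lambda}}$, whose solution is $\check c_1=\sqrt{\lambda}\bigl(h_1'(0)-h_1'(a)e^{-a/\sqrt{\lambda}}\bigr)/\bigl(1-e^{-2a/\sqrt{\lambda}}\bigr)$ and $\check c_2=\sqrt{\lambda}\bigl(h_1'(0)e^{-a/\sqrt{\lambda}}-h_1'(a)\bigr)/\bigl(1-e^{-2a/\sqrt{\lambda}}\bigr)$. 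It remains to recast these in the integral form (\ref{eq:c1})--(\ref{eq:c2}). The key point is that by (\ref{2nd_order_eq}) the particular solution satisfies $\hat h_n(x)-h_1(x)=-\lambda h_1''(x)$, so that
$$
\bigl(\hat h_n(x)-h_1(x)+\sqrt{\lambda}\,h_1'(x)\bigr)e^{-x/\sqrt{\lambda}}=-\sqrt{\lambda}\,\frac{d}{dx}\Bigl(\sqrt{\lambda}\,h_1'(x)e^{-x/\sqrt{\lambda}}\Bigr).
$$
Integrating over $[0,a]$ gives $\lambda\bigl(h_1'(0)-h_1'(a)e^{-a/\sqrt{\lambda}}\bigr)$, which is $\sqrt{\lambda}\{1-e^{-2a/\sqrt{\lambda}}\}$ times the displayed $\check c_1$, i.e.\ (\ref{eq:c1}). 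Formula (\ref{eq:c2}) follows from the mirror identity with $e^{-(a-x)/\sqrt{\lambda}}$ in place of $e^{-x/\sqrt{\lambda}}$, or simply from the reflection $x\mapsto a-x$, which swaps the two endpoints.

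I expect the only genuine subtlety to be getting the natural boundary conditions right — that it is $h'$, not $h$, that must vanish at $0$ and $a$, and with the correct sign in the boundary term — together with the integration-by-parts bookkeeping; once (\ref{2nd_order_eq}) is invoked, checking that the two expressions for $\check c_1,\check c_2$ agree is a one-line computation. One should also note in passing that $\hat h_n$ is only piecewise constant, so $h_1''$ has jumps, but it is bounded, and the integration by parts above remains valid.
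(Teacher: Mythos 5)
Your argument is correct, and the two halves compare differently with the paper. For $\bar h_n$ you do exactly what the paper does: invoke Lemma \ref{lem:charvar} with $\kappa_1=\hat h_n(0)$, $\kappa_2=\hat h_n(a)$ and solve the $2\times2$ system with determinant $1-e^{-2a/\sqrt{\lambda}}$, which gives (\ref{eq:c1_2})--(\ref{eq:c2_2}). For $\check h_n$ your route is genuinely different: the paper restricts $\Phi_{\lambda}$ to the two-parameter family (\ref{eq:partsol}) and sets $\partial\Phi_{\lambda}/\partial c_1=\partial\Phi_{\lambda}/\partial c_2=0$; after the cross terms cancel and $\int_0^a e^{-2x/\sqrt{\lambda}}\,dx=\tfrac{\sqrt{\lambda}}{2}\{1-e^{-2a/\sqrt{\lambda}}\}$ is used, this yields the integral formulas (\ref{eq:c1})--(\ref{eq:c2}) directly, with no need for the ODE identity. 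You instead derive the natural boundary conditions $h'(0)=h'(a)=0$ from the boundary term of the unrestricted first variation, solve for $(c_1,c_2)$ in terms of $h_1'(0)$ and $h_1'(a)$, and then convert to the stated integral form via $\hat h_n-h_1=-\lambda h_1''$ and the exact-derivative identity; I checked the sign bookkeeping and the resulting expressions agree with (\ref{eq:c1})--(\ref{eq:c2}). Your route buys an explicit endpoint-derivative representation $\check c_1=\sqrt{\lambda}\{h_1'(0)-h_1'(a)e^{-a/\sqrt{\lambda}}\}/\{1-e^{-2a/\sqrt{\lambda}}\}$ (and its mirror), which is closely related to the relation (\ref{eq:derivrel}) the paper only exploits later in Lemma \ref{lem:asympc}, and your convexity remark cleanly settles existence and uniqueness, which the paper leaves implicit; the paper's route is shorter because the first-order conditions in $(c_1,c_2)$ are already in the form of the claimed formulas. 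Your closing caveat is also right: $h_1$ is $C^1$ with only piecewise continuous second derivative because $\hat h_n$ is piecewise constant, but that does not affect the integration by parts.
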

\begin{proof}
The parameters $\check{c}_1$ and $\check{c}_2$ are found by differentiating the criterion $\Phi_{\l}$ evaluated at (\ref{eq:partsol}) with respect to $c_1$ and $c_2$. Differentiation w.r.t.\ $c_1$ yields
\begin{align*}
&\int_{0}^a\left\{h(x)-\hat h_n(x)-\sqrt{\l} h'(x)\right\}e^{-x/\sqrt{\l}}\,dx=0
\end{align*}
and differentiation w.r.t.\ $c_2$ yields
\begin{align*}
\int_{0}^a\left\{h(x)-\hat h_n(x)+\sqrt{\l} h'(x)\right\}e^{-(a-x)/\sqrt{\l}}\,dx=0,
\end{align*}
where the dependence on $c_1$ and $c_2$ in the equations is implicit via $h$ and $h^{\prime}$.
From this (\ref{eq:c1}) and (\ref{eq:c2}) follow. To get (\ref{eq:c1_2}) and (\ref{eq:c2_2}), $c_1$ and $c_2$ are chosen in (\ref{eq:partsol}) to satisfy the imposed boundary constraints.  \end{proof}

\noindent
The major part of the asymptotic behavior of the smoothness-penalized estimators are related to the asymptotics of $h_1$. The lemma below establishes uniform consistency of $h_1$.

\begin{lemma}
\label{lem:unifsmallerset}
Let $\hat h_n$ be the (possibly boundary-penalized) least squares estimator of section \ref{section:isotonic_estimates}, where $\a_n,\b_n\downarrow0$. Let $h_1$ be defined by (\ref{eq:h1}). Then, for $\lambda=\lambda_n\downarrow0$ and $(\log n)^2\lambda\rightarrow 0$, we have for all $0<\delta<a/2$
$$
\sup_{[\delta,a-\delta]}|h_1(x)-h_0(x)|=O_P(n^{-1/4})
$$
If, moreover, $\a_n$ and $\b_n$ satisfy the conditions of Corollary \ref{lem:conshn},  then  for $x=0$ and $x=a$ $h_1(x)\rightarrow\tfrac12h_0(x)$ with probability one.
\end{lemma}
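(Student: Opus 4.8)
The plan is to use the identity $h_1=K_b*\hat h_n$ from the remark after Lemma~\ref{lem:charvar}, with $K(u)=\tfrac12 e^{-|u|}$ and bandwidth $b=\sqrt\lambda$, together with the uniform rate for $\hat h_n$ from Corollary~\ref{lem:conshn} in the interior and the boundary characterization in Theorem~\ref{th:optimal_penalization}(i) at the endpoints. Note first that $(\log n)^2\lambda\to0$ forces $1/\sqrt\lambda\gg\log n$, so $e^{-\delta/\sqrt\lambda}=o(n^{-c})$ for every fixed $c>0$ and $\delta>0$. For the interior bound I would write, for $x\in[\delta,a-\delta]$,
\begin{align*}
h_1(x)-h_0(x)
&=\int_0^a K_b(x-y)\bigl(\hat h_n(y)-h_0(y)\bigr)\,dy+\int_0^a K_b(x-y)\bigl(h_0(y)-h_0(x)\bigr)\,dy\\
&\quad+h_0(x)\Bigl(\int_0^a K_b(x-y)\,dy-1\Bigr),
\end{align*}
with all bounds below uniform in $x$. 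The first term is at most $\sup_{[0,a]}|\hat h_n-h_0|=O_P(n^{-1/4})$ by Corollary~\ref{lem:conshn}; this supplies the rate. The third term equals $-\tfrac12 h_0(x)\bigl(e^{-x/\sqrt\lambda}+e^{-(a-x)/\sqrt\lambda}\bigr)=o(n^{-c})$ on $[\delta,a-\delta]$. For the middle term I would split the integral at $|y-x|=\delta$ (legitimate since $[x-\delta,x+\delta]\subseteq[0,a]$ for $\delta<a/2$): the tail part is $O(e^{-\delta/\sqrt\lambda})$, and the central part, by Taylor expansion on the symmetric interval $[-\delta/b,\delta/b]$ in which the $O(b)$ contribution integrates to $0$, is $O(b^2)=O(\lambda)$, negligible relative to $n^{-1/4}$ under the hypothesis on $\lambda$. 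Taking the supremum over $x$ gives the first assertion.

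For the endpoints I would treat $x=0$, the case $x=a$ being symmetric. Since $K_b(-y)=\tfrac1{2b}e^{-y/b}$ for $y\ge0$, only half of the kernel mass lies in $[0,\infty)$ and $\int_0^\infty K_b(-y)\,dy=\tfrac12$, which is the source of the factor $\tfrac12$. Fix $\delta\in(0,a)$ and split $h_1(0)=\int_0^\delta K_b(-y)\hat h_n(y)\,dy+\int_\delta^a K_b(-y)\hat h_n(y)\,dy$. Monotonicity of $\hat h_n$ together with $\hat h_n\ge0$ gives
\begin{align*}
\hat h_n(0)\cdot\tfrac12\bigl(1-e^{-\delta/b}\bigr)\;\le\;\int_0^\delta K_b(-y)\hat h_n(y)\,dy\;&\le\;\hat h_n(\delta)\cdot\tfrac12\bigl(1-e^{-\delta/b}\bigr),\\
0\;\le\;\int_\delta^a K_b(-y)\hat h_n(y)\,dy\;&\le\;\tfrac12\hat h_n(a)e^{-\delta/b}.
\end{align*}
By Lemma~\ref{lem:conshnint}, $\hat h_n(\delta)\to h_0(\delta)$ a.s., and $\hat h_n(a)=O(n^3)$ a.s.\ (because $\hat h_n(a)(a-t_n^*)=\hat H_n(a)-\hat H_n(t_n^*)\le\H_n(a-)+\alpha_n$ is a.s.\ bounded, while the last cusum breakpoint $t_n^*$, being $\le\max\{X_{(i)}:X_{(i)}<a\}$, satisfies $a-t_n^*\ge n^{-3}$ eventually by Borel--Cantelli), so the second integral is $o(n^{-1})\to0$ a.s. It therefore suffices to show $\hat h_n(0)\to h_0(0)$ a.s. By Theorem~\ref{th:optimal_penalization}(i), for every fixed $x_0>0$, $\hat h_n(0)=\inf_{x\in[0,\delta]}\frac{\H_n(x)+\alpha_n}{x}\le\frac{\H_n(x_0)+\alpha_n}{x_0}\to\frac{H_0(x_0)}{x_0}$ a.s.; since $x\mapsto H_0(x)/x$ decreases to $h_0(0)$ as $x\downarrow0$, this gives $\limsup_n\hat h_n(0)\le h_0(0)$ a.s. For the reverse inequality I would split $[0,\delta]=(0,c_n]\cup[c_n,\delta]$ with $c_n=(\log n)/n$: on $(0,c_n]$ the penalty dominates, $\frac{\H_n(x)+\alpha_n}{x}\ge\frac{\alpha_n}{c_n}=\frac{n\alpha_n}{\log n}\to\infty$ since $\alpha_n\sim n^{-2/3}$; on $[c_n,\delta]$, using that $x\mapsto H_0(x)/x$ is nondecreasing,
\[
\inf_{x\in[c_n,\delta]}\frac{\H_n(x)+\alpha_n}{x}\ge\frac{H_0(c_n)}{c_n}-\sup_{x\in[c_n,\delta]}\frac{|\H_n(x)-H_0(x)|}{x},
\]
where $H_0(c_n)/c_n\to h_0(0)$ and the supremum tends to $0$ a.s.\ by an iterated-logarithm bound on the increments of the empirical cumulative hazard near the origin (valid because $nc_n/\log\log n\to\infty$). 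Hence $\liminf_n\hat h_n(0)\ge h_0(0)$ a.s., so $\hat h_n(0)\to h_0(0)$ a.s.; combining the displays, $\liminf_n h_1(0)\ge\tfrac12 h_0(0)$ and $\limsup_n h_1(0)\le\tfrac12 h_0(\delta)$ a.s., and letting $\delta\downarrow0$ along a rational sequence, with $h_0$ continuous, yields $h_1(0)\to\tfrac12 h_0(0)$ a.s. The argument at $x=a$ is the mirror image; the one asymmetry is that the hard bound $\limsup_n\hat h_n(a)\le h_0(a)$ a.s.\ uses the subtracted penalty $\beta_n\sim n^{-2/3}$ to kill the contribution of $x$ within $c_n$ of $a$ to $\sup_{x\in[a-\delta,a]}\frac{\H_n(a)-\beta_n-\H_n(x)}{a-x}$, once more via the local oscillation bound for $\H_n$.

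The main obstacle is exactly this last step: the \emph{almost-sure} boundary consistency $\liminf_n\hat h_n(0)\ge h_0(0)$ (equivalently $\limsup_n\hat h_n(a)\le h_0(a)$). It is a genuine statement about the empirical cumulative hazard on windows of width tending to $0$, requires quantitative (iterated-logarithm / modulus-of-continuity) control of its increments there, and must be calibrated against the exact order $n^{-2/3}$ of the penalties — which is where the hypothesis that $\alpha_n,\beta_n$ satisfy the conditions of Corollary~\ref{lem:conshn} enters. The interior decomposition, the tail estimates, and the easy halves of the boundary limits are all routine given the results already established.
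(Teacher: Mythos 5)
Your interior argument is essentially the paper's: write $h_1=K_b*\hat h_n$ with $K(u)=\tfrac12e^{-|u|}$ and $b=\sqrt{\lambda}$, bound the main term by $\sup|\hat h_n-h_0|$ via Corollary~\ref{lem:conshn}, and note that the boundary-truncation term is $O(e^{-\delta/\sqrt{\lambda}})=o(n^{-c})$ for every $c$ because $(\log n)^2\lambda\to0$. You are in fact more careful than the paper, which silently absorbs the smoothing bias $\int K_b(x-y)(h_0(y)-h_0(x))\,dy$ into the first term. However, your claim that this bias, of order $O(\lambda)$, is ``negligible relative to $n^{-1/4}$ under the hypothesis on $\lambda$'' is not justified: $(\log n)^2\lambda\to0$ only gives $\lambda=o((\log n)^{-2})$, which is compatible with $\lambda\gg n^{-1/4}$; moreover the second-order Taylor expansion with vanishing linear term requires $h_0''$, which is not assumed in this lemma. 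So your argument really delivers $O_P(n^{-1/4})+O(\lambda)$ rather than the stated rate. This is a defect shared with (indeed worse in) the paper's own proof, which in the end concludes only convergence to $0$ in probability, but you should not assert that the stated hypothesis on $\lambda$ disposes of it.

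At the boundary you do substantially more than the paper, which simply substitutes $y\mapsto y\sqrt{\lambda}$ and asserts $\tfrac12\int_0^{a/\sqrt{\lambda}}\hat h_n(y\sqrt{\lambda})e^{-y}\,dy\to\tfrac12h_0(0)$ almost surely. Your sandwich of $\int_0^{\delta}$ between $\hat h_n(0)$ and $\hat h_n(\delta)$ times $\tfrac12(1-e^{-\delta/b})$, the crude a.s.\ polynomial bound on $\hat h_n(a)$ to kill the tail, and above all the proof that $\hat h_n(0)\to h_0(0)$ with probability one (upper bound from Theorem~\ref{th:optimal_penalization}(i) evaluated at a fixed point, lower bound by splitting at $c_n=(\log n)/n$ so that $\alpha_n\sim n^{-2/3}$ dominates below $c_n$ and a ratio-limit bound controls $|\H_n(x)-H_0(x)|/x$ above $c_n$) are sound and fill a real gap: the paper's own results give only in-probability consistency of $\hat h_n$ at the endpoints (Corollary~\ref{lem:conshn} is $O_P$, Theorem~\ref{th:optimal_penalization}(iii) is distributional), so the ``with probability one'' claim genuinely needs an argument of the kind you supply. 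The one step you should make explicit is the almost-sure bound $\sup_{x\in[c_n,\delta]}|\H_n(x)-H_0(x)|/x\to0$; it follows from the a.s.\ ratio limit theorem for the empirical distribution function (applicable since $nF_0(c_n)\asymp\log n\gg\log\log n$) together with $F_0(x)\asymp x$ near $0$ and the transfer from $\F_n$ to $\H_n$, but it is not available off the shelf in the paper and deserves a citation or a short proof.
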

\begin{proof}Note that for each $x\in[\delta,a-\delta]$
\begin{align*}
&|h_1(x)-h_0(x)|=\tfrac12\lambda^{-1/2}\left|\int_0^ae^{-|x-y|/\sqrt{\l}}\hat{h}_n(y)\,dy-\int_{-\infty}^{\infty}e^{-|x-y|/\sqrt{\l}}h_0(x)\,dy\right|\\
&\,\,\,\le\sup_{[\delta,a-\delta]}|\hat{h}_n(z)-h_0(z)|+\tfrac12 \lambda^{-1/2}h_0(x)\left(\int_{-\infty}^0+\int_{a}^{\infty}\right)e^{-|x-y|/\sqrt{\l}}\,dy\\
&\,\,\,\le\sup_{[\delta,a-\delta]}|\hat{h}_n(z)-h_0(z)|+\tfrac12 h_0(a)\left(\int_{-\infty}^{-x/\sqrt{\lambda}}+\int_{(a-x)/\sqrt{\lambda}}^{\infty}\right)e^{-|v|}\,dv\\
&\,\,\,\le\sup_{[\delta,a-\delta]}|\hat{h}_n(z)-h_0(z)|+ h_0(a)\int_{\delta/\sqrt{\lambda}}^{\infty}e^{-v}\,dv\rightarrow0
\end{align*}
in probability as $n\rightarrow\infty$, where the upper bound is uniform in $x\in[\delta,a-\delta]$. Here we use Lemma \ref{lem:conshnint}.

Now consider $x=0$. We have
$$
h_1(0)=\tfrac12\lambda^{-1/2}\int_0^ae^{-y/\sqrt{\l}}\hat{h}_n(y)\,dy=\tfrac12\int_0^{a/\sqrt{\l}}\hat{h}_n(y\sqrt{\l})e^{-y}\,dy\rightarrow\tfrac12h_0(0)
$$
a.s.\ as $n\rightarrow\infty$. For $x=a$ the result follows analogously.\end{proof}

\noindent
In the lemma below, we investigate the asymptotics of  the constants $c_1$ and $c_2$ in Lemma \ref{lem:variation} as $\lambda=\lambda_n\downarrow0$.
\begin{lemma}
\label{lem:asympc}
Let $\hat h_n$ be the boundary-penalized least squares estimator of section \ref{section:isotonic_estimates}. Let $\a_n$ and $\b_n$ be of the order $n^{-2/3}$. Then, for $\lambda\downarrow0$,
$$
\bar c_1=\hat{h}_n(0)-h_1(0)+o_P(e^{-a/\sqrt{\lambda}}),\,\,\bar c_2=\hat{h}_n(a)-h_1(a)+o_P(e^{-a/\sqrt{\lambda}})
$$
and
\begin{eqnarray}
\check c_1&=& \int_0^{a/\sqrt{\l}} e^{-x}\hat h_n(x\sqrt{\l})\,dx+\label{eq:checkc1as}\\
&&\,\,\,\,\,-\int_{x=0}^{a/\sqrt{\l}}e^{-x}\int_{y=0}^x \hat h_n(y\sqrt{\l})e^{-(x-y)}\,dy\,dx+o_P(1)\mbox{ and }\nonumber \\
\check c_2&=& \int_0^{a/\sqrt{\l}} e^{-x}\hat h_n(a-x\sqrt{\l})\,dx+\label{eq:checkc2as}\\
&&\,\,\,\,\,-\int_{x=0}^{a/\sqrt{\l}}e^{-x}\int_{y=0}^x \hat h_n(a-y\sqrt{\l})e^{-(x-y)}\,dy\,dx+o_P(1).
\nonumber
\end{eqnarray}
Consequently, for $\l\downarrow0$ and under the conditions of Corollary \ref{lem:conshn}, $\check{c}_1,\bar{c}_1\stackrel{p}\longrightarrow\tfrac12 h_0(0)$ and $\check{c}_2,\bar{c}_2\stackrel{p}\longrightarrow\tfrac12 h_0(a)$
\end{lemma}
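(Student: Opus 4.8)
The plan is to treat the four constants in turn, starting from the closed-form expressions in Corollary \ref{lem:variation} and performing a change of variables to rescale the integration interval to one of order $a/\sqrt{\lambda}$, which grows to infinity. For $\bar c_1$ and $\bar c_2$ the work is almost immediate: in the formula (\ref{eq:c1_2}) the denominator $1-e^{-2a/\sqrt{\lambda}}\to 1$, and the cross term $\{\hat h_n(a)-h_1(a)\}e^{-a/\sqrt{\lambda}}$ is $o_P(e^{-a/\sqrt{\lambda}})$ because $\hat h_n(a)-h_1(a)=O_P(1)$ (by Corollary \ref{lem:conshn} and Lemma \ref{lem:unifsmallerset}). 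Expanding $1/(1-e^{-2a/\sqrt{\lambda}})=1+O(e^{-2a/\sqrt{\lambda}})$ and absorbing the resulting terms into the $o_P(e^{-a/\sqrt{\lambda}})$ remainder gives $\bar c_1=\hat h_n(0)-h_1(0)+o_P(e^{-a/\sqrt{\lambda}})$, and symmetrically for $\bar c_2$. Combined with Lemma \ref{lem:unifsmallerset} (which gives $h_1(0)\to\tfrac12h_0(0)$ a.s.) and Theorem \ref{th:optimal_penalization}(iii) (which gives $\hat h_n(0)\to h_0(0)$ in probability), this yields $\bar c_1\stackrel{p}\to\tfrac12h_0(0)$ and $\bar c_2\stackrel{p}\to\tfrac12h_0(a)$.

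For $\check c_1$ I would start from (\ref{eq:c1}), divide numerator and denominator by $\sqrt{\lambda}$, and substitute $x=u\sqrt{\lambda}$ in the numerator integral, turning it into $\int_0^{a/\sqrt{\lambda}}\bigl(\hat h_n(u\sqrt{\lambda})-h_1(u\sqrt{\lambda})+\sqrt{\lambda}\,h_1'(u\sqrt{\lambda})\bigr)e^{-u}\,du$, with denominator $1-e^{-2a/\sqrt{\lambda}}\to1$. The term involving $\sqrt{\lambda}\,h_1'$ should be shown to be $o_P(1)$: differentiating (\ref{eq:h1}) gives $h_1'(x)=\tfrac12\lambda^{-1}\int_0^a\mathrm{sgn}(y-x)e^{-|y-x|/\sqrt\lambda}\hat h_n(y)\,dy$, so $\sqrt\lambda\,h_1'(x)$ is bounded by a constant times $\sup_{[0,a]}|\hat h_n|=O_P(1)$ uniformly, and the factor $e^{-u}$ makes the integral finite; a more careful bound (integrating by parts, or noting that near the bulk $h_1'$ is close to $h_0'$ which is bounded, while only the boundary layer of width $O(\sqrt\lambda)$ could contribute anything of size $O_P(1)$, weighted by $e^{-u}$ over $u=O(1)$) is still only $O_P(\sqrt\lambda)=o_P(1)$. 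Next I would insert the particular solution (\ref{eq:h1}) for $h_1(u\sqrt\lambda)$ inside the integral; writing $h_1(u\sqrt\lambda)=\tfrac12\int_0^{a/\sqrt\lambda}e^{-|v-u|}\hat h_n(v\sqrt\lambda)\,dv$ and splitting the exponential into the $v<u$ and $v>u$ parts, the $v<u$ part reproduces exactly the iterated integral $\int_0^{a/\sqrt\lambda}e^{-x}\int_0^x\hat h_n(y\sqrt\lambda)e^{-(x-y)}\,dy\,dx$ appearing in (\ref{eq:checkc1as}), while the $v>u$ part is shown to be asymptotically negligible (after interchanging the order of integration it contributes $\tfrac12\int_0^{a/\sqrt\lambda}\hat h_n(v\sqrt\lambda)e^{-v}\bigl(\int_0^v e^{-2(v-u)}\cdot e^{v-v}\dots\bigr)$; the point is the extra exponential decay). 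Collecting the surviving pieces gives (\ref{eq:checkc1as}); the argument for $\check c_2$ is identical after the substitution $x=a-u\sqrt\lambda$.

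Finally, to pass from (\ref{eq:checkc1as}) to the limit $\check c_1\stackrel{p}\to\tfrac12h_0(0)$, I would use uniform consistency of $\hat h_n$ near $0$: since $\lambda\downarrow0$, for any fixed $U$ the integrand $\hat h_n(x\sqrt\lambda)$ with $x\le U$ converges in probability to $h_0(0)$ (by Corollary \ref{lem:conshn}, or by continuity of $h_0$ together with consistency of $\hat h_n$ on $[0,\delta]$), and the tails $x>U$ are controlled by $e^{-x}$ and $\sup_{[0,a]}|\hat h_n|=O_P(1)$. Hence the first integral in (\ref{eq:checkc1as}) tends to $h_0(0)\int_0^\infty e^{-x}\,dx=h_0(0)$, and the double integral tends to $h_0(0)\int_0^\infty e^{-x}\int_0^x e^{-(x-y)}\,dy\,dx=h_0(0)\int_0^\infty e^{-x}(1-e^{-x})\,dx=h_0(0)(1-\tfrac12)=\tfrac12h_0(0)$, so the difference is $h_0(0)-\tfrac12h_0(0)=\tfrac12h_0(0)$. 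The analogous computation at $a$ gives $\check c_2\stackrel{p}\to\tfrac12h_0(a)$. The main obstacle is the bookkeeping in the second paragraph: showing that the $\sqrt\lambda\,h_1'$ contribution and the ``wrong-sign'' ($v>u$) part of the kernel expansion are genuinely $o_P(1)$ uniformly, rather than merely bounded, and that the order-of-integration swaps are valid on the growing interval $[0,a/\sqrt\lambda]$ — this is where one must be careful that the boundary layer of width $O(\sqrt\lambda)$, over which $\hat h_n$ may not be close to $h_0$, does not spoil the estimate once weighted by the exponential.
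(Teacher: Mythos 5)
Your treatment of $\bar c_1,\bar c_2$ and your final limit computation for the right-hand sides of (\ref{eq:checkc1as})--(\ref{eq:checkc2as}) agree with the paper. The derivation of (\ref{eq:checkc1as}) itself, however, has a genuine gap: both terms you discard as negligible contribute nonzero constants in the limit. First, $\sqrt{\lambda}\,h_1'(u\sqrt{\lambda})$ is not $o_P(1)$ where it matters. Differentiating (\ref{eq:h1}) gives
$$
\sqrt{\lambda}\,h_1'(x)=-\frac{1}{2\sqrt{\lambda}}\int_0^x e^{-(x-y)/\sqrt{\lambda}}\hat h_n(y)\,dy+\frac{1}{2\sqrt{\lambda}}\int_x^a e^{-(y-x)/\sqrt{\lambda}}\hat h_n(y)\,dy,
$$
and at $x=u\sqrt{\lambda}$ with $u$ fixed this converges to $-\tfrac12h_0(0)(1-e^{-u})+\tfrac12h_0(0)=\tfrac12h_0(0)e^{-u}$. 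The weight $e^{-u}$ in your rescaled numerator concentrates precisely on this boundary layer ($u=O(1)$, i.e.\ $x=O(\sqrt{\lambda})$), so the term contributes $\int_0^\infty\tfrac12h_0(0)e^{-2u}\,du=\tfrac14h_0(0)$ to the limit; your ``bulk versus boundary layer'' bound $O_P(\sqrt{\lambda})$ fails because in the rescaled variable the layer has width $O(1)$ and weight $O(1)$. Second, in your split of $h_1(u\sqrt{\lambda})$ the $v<u$ part equals only \emph{half} of the iterated integral in (\ref{eq:checkc1as}) (the kernel carries a factor $\tfrac12$), and the $v>u$ part is not small: after interchanging the order of integration, $e^{-u}e^{-(v-u)}=e^{-v}$ exactly, so there is no extra exponential decay in $u$ and that part equals $\tfrac12\int_0^{a/\sqrt{\lambda}}ve^{-v}\hat h_n(v\sqrt{\lambda})\,dv\rightarrow\tfrac12h_0(0)$. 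These errors do not cancel: followed literally, your approximations give $\tfrac34h_0(0)$ as the limit of $\check c_1$ rather than $\tfrac12h_0(0)$, contradicting your own (correct) evaluation of the right-hand side of (\ref{eq:checkc1as}).

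The paper sidesteps all of this with one exact identity: the combination $-h_1(x)+\sqrt{\lambda}\,h_1'(x)$ in the numerator of (\ref{eq:c1}) satisfies $\lambda^{-1/2}h_1(x)-h_1'(x)=\lambda^{-1}\int_0^x\hat h_n(y)e^{-(x-y)/\sqrt{\lambda}}\,dy$ — the two-sided kernel and its derivative collapse to a one-sided kernel with no factor $\tfrac12$. Substituting this and rescaling $x=u\sqrt{\lambda}$, $y=v\sqrt{\lambda}$ yields (\ref{eq:checkc1as}) exactly, up to the factor $1-e^{-2a/\sqrt{\lambda}}$ in the denominator; the analogous identity $\lambda^{-1/2}h_1(x)+h_1'(x)=\lambda^{-1}\int_x^a\hat h_n(y)e^{(x-y)/\sqrt{\lambda}}\,dy$ handles $\check c_2$. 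You should replace the kernel-splitting argument of your second paragraph by this identity.
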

\begin{proof} For $\bar c_1$ and $\bar c_2$ the result immediately follows from (\ref{eq:c1_2}) and (\ref{eq:c2_2}) and Corollary \ref{lem:conshn}. For $\check c_1$ note that
\begin{align}
\label{eq:derivrel}
&\l^{-1/2}h_1(x)-h_1'(x)=\frac1{2\l}\int_0^a \hat h_n(y)e^{-|x-y|/\sqrt{\l}}\,dy
+\frac1{2\l}\int_0^x \hat h_n(y)e^{-(x-y)/\sqrt{\l}}\,dy\\
&\quad\quad\quad\quad-\frac1{2\l}\int_x^a \hat h_n(y) e^{(x-y)/\sqrt{\l}}\,dy
=\frac1{\l}\int_0^x \hat h_n(y)e^{-(x-y)/\sqrt{\l}}\,dy,\nonumber
\end{align}
implying (\ref{eq:checkc1as}).
Using that
$$
\l^{-1/2}h_1(x)+h_1'(x)=\frac1{\l}\int_x^a \hat h_n(y)e^{(x-y)/\sqrt{\l}}\,dy,
$$
(\ref{eq:checkc2as}) follows similarly. The last statements on the convergence in probability of the $c_i$'s use Lemma \ref{lem:unifsmallerset}. For $\check{c}_1$, note that the second term in (\ref{eq:checkc1as}) can be written as
$$
\int_0^{a/\sqrt{\lambda}}e^{-2x}\int_0^x\hat{h}_n(y\sqrt{\lambda})e^{y}\,dydx=\frac12\int_0^{a/\sqrt{\lambda}}\hat{h}_n(y\sqrt{\lambda})\left(e^{-y}-e^{y-2a/\sqrt{\lambda}}\right)\,dy.
$$
\end{proof}

Pictures for $n=100$ of the estimates of $\check{h}_n$, its derivative $\check{h}_n'$ and the density $\check f_n$ are shown in Figure \ref{fig:hazard2}, where $\l=0.10$. The same pictures for (the boundary-constrained) $\bar{h}_n$ are shown in Figure \ref{fig:hazard3}.

\begin{figure}
\begin{center}
\includegraphics[width=4cm]{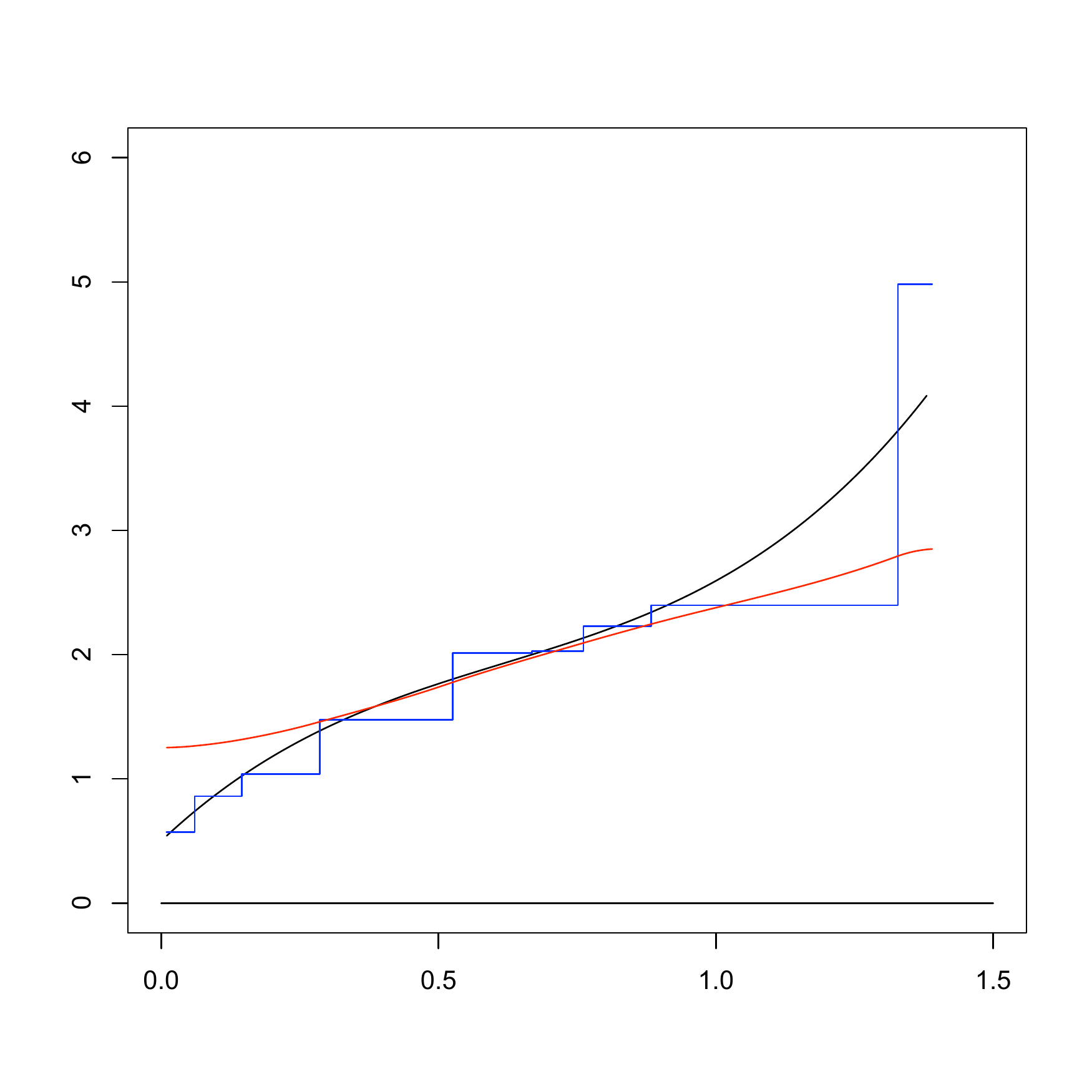}
\includegraphics[width=4cm]{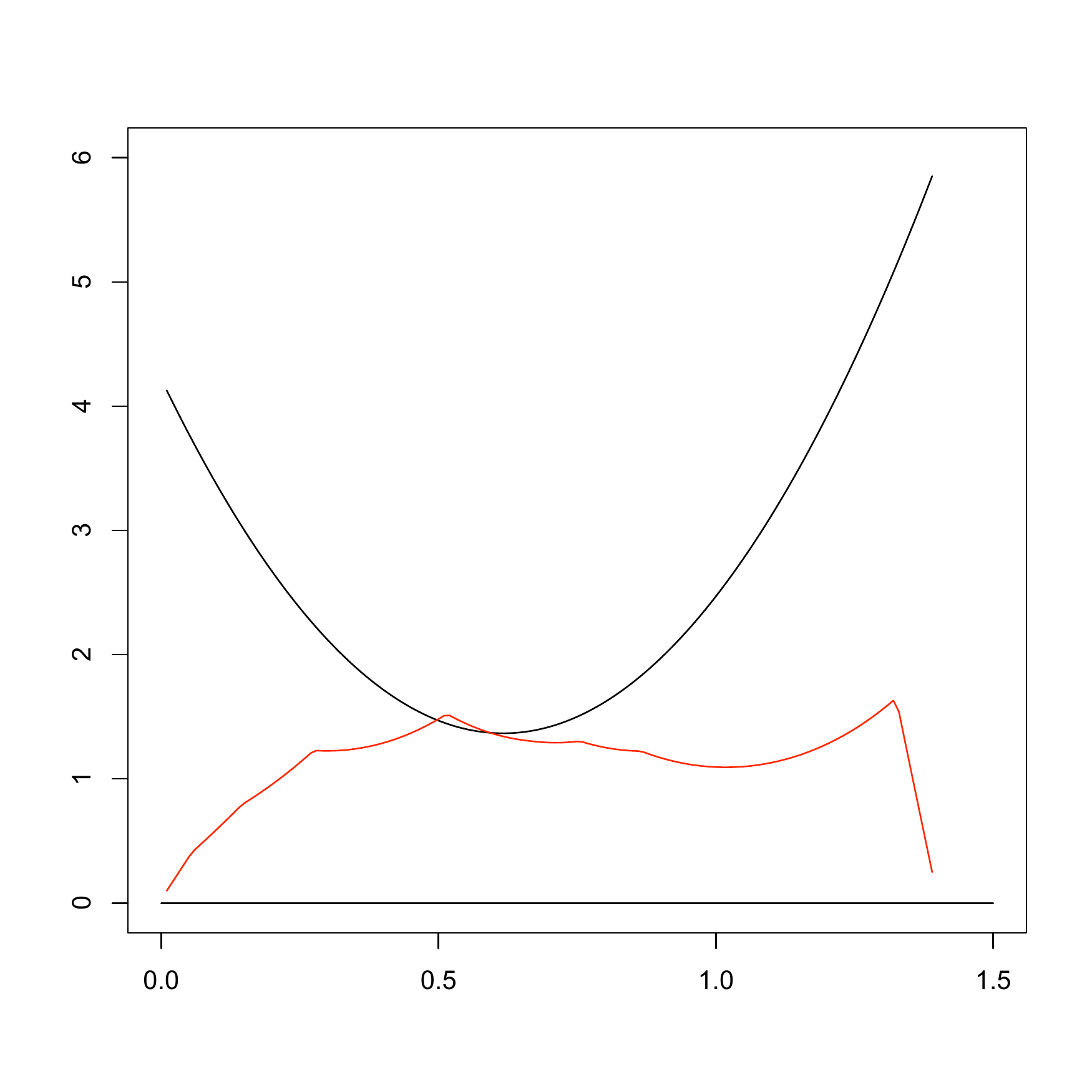}
\includegraphics[width=4cm]{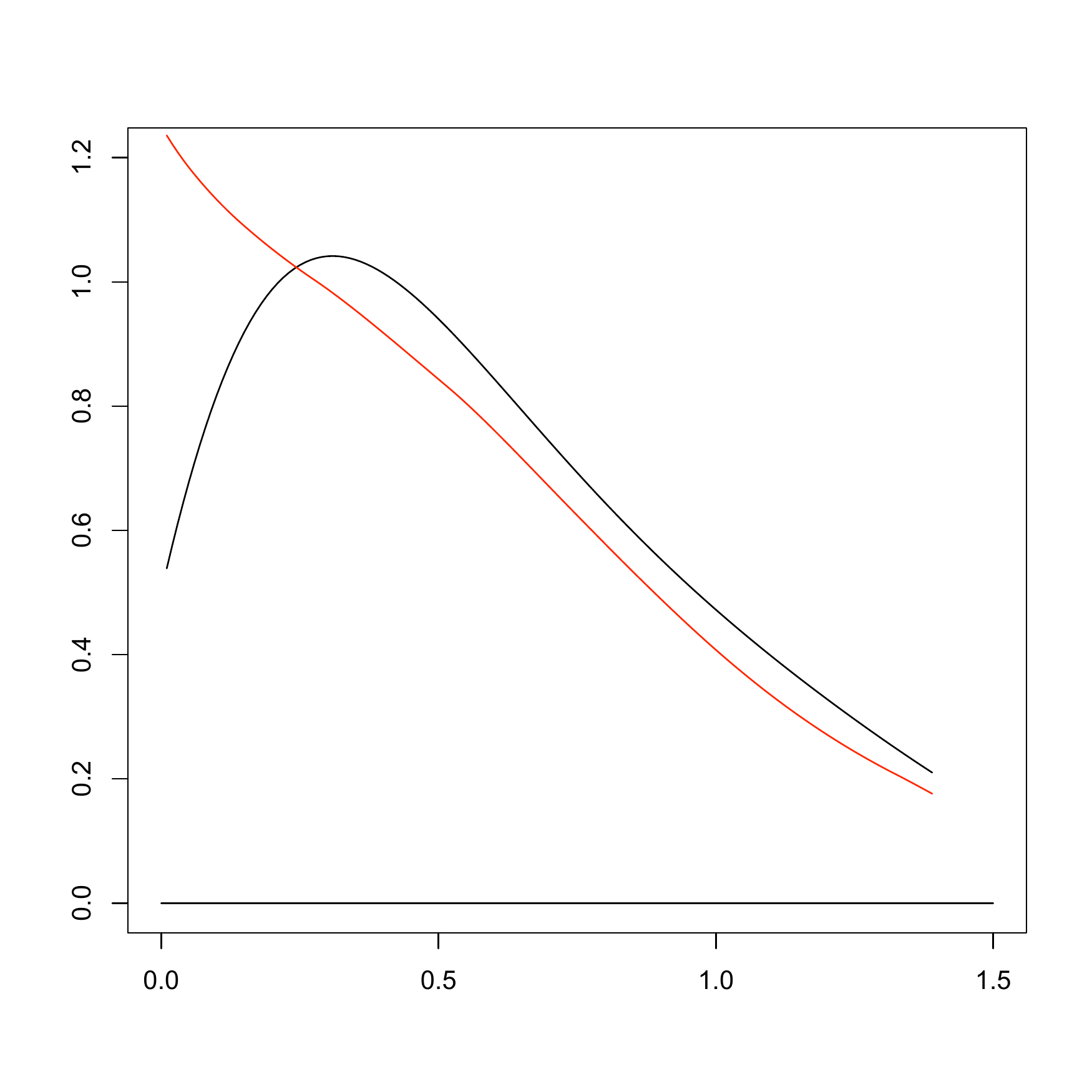}
\end{center}
\caption{The left panel shows the estimates $\hat h_n$ (blue), $\check h_n$ (red) of the hazard $h^{(1)}$ (of the family $\{h^{(d)}:d\in[-1,1]\}$) for a sample of size $n=100$, and the real density $h^{(1)}$ (black) on the $95\%$ percentile interval $[0,\left(F^{(1)}\right)^{-1}(0.95)]$; the corresponding derivatives of the hazard rates and the densities are shown in the middle- and right picture.}
\label{fig:hazard2}
\end{figure}

\begin{figure}
\begin{center}
\includegraphics[width=4cm]{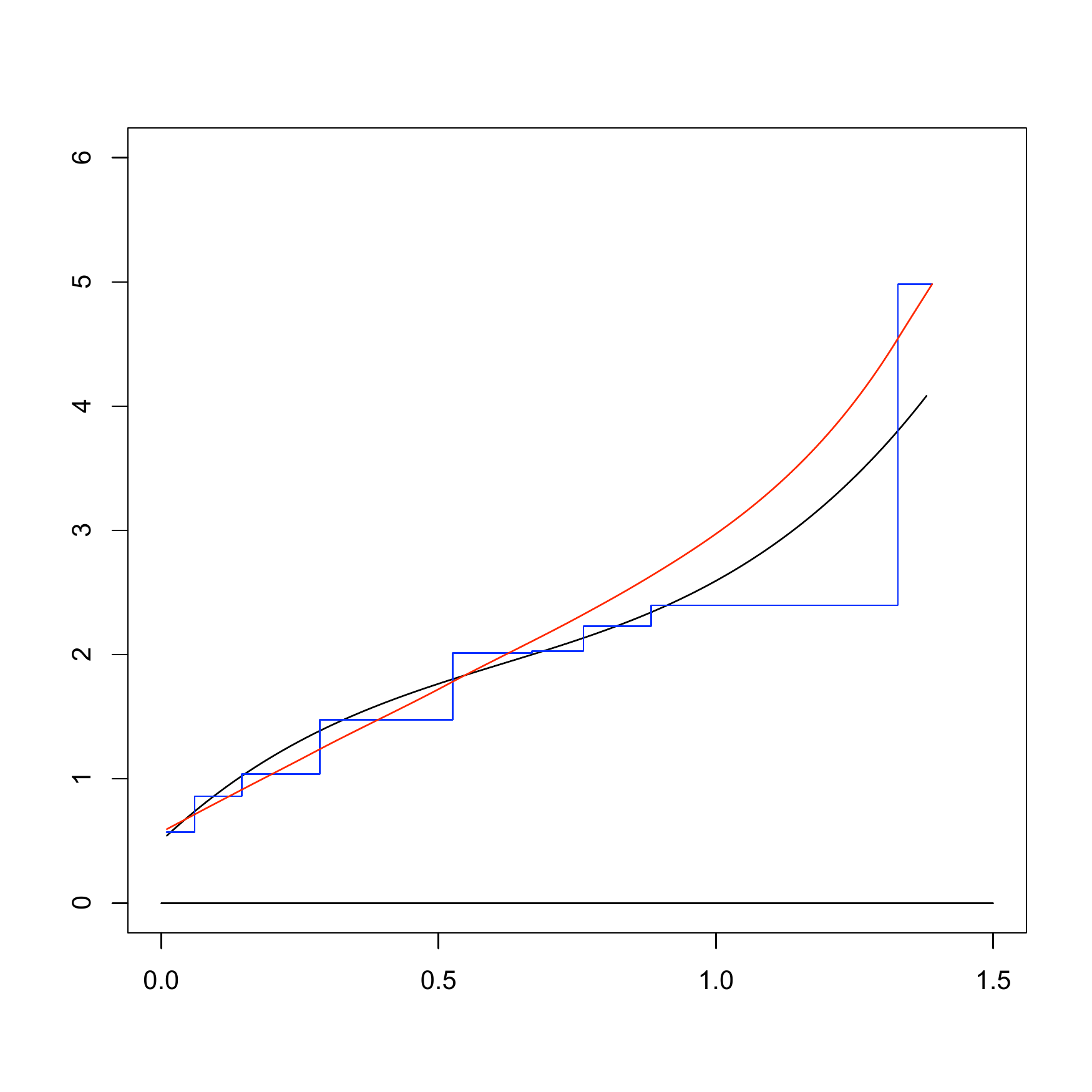}
\includegraphics[width=4cm]{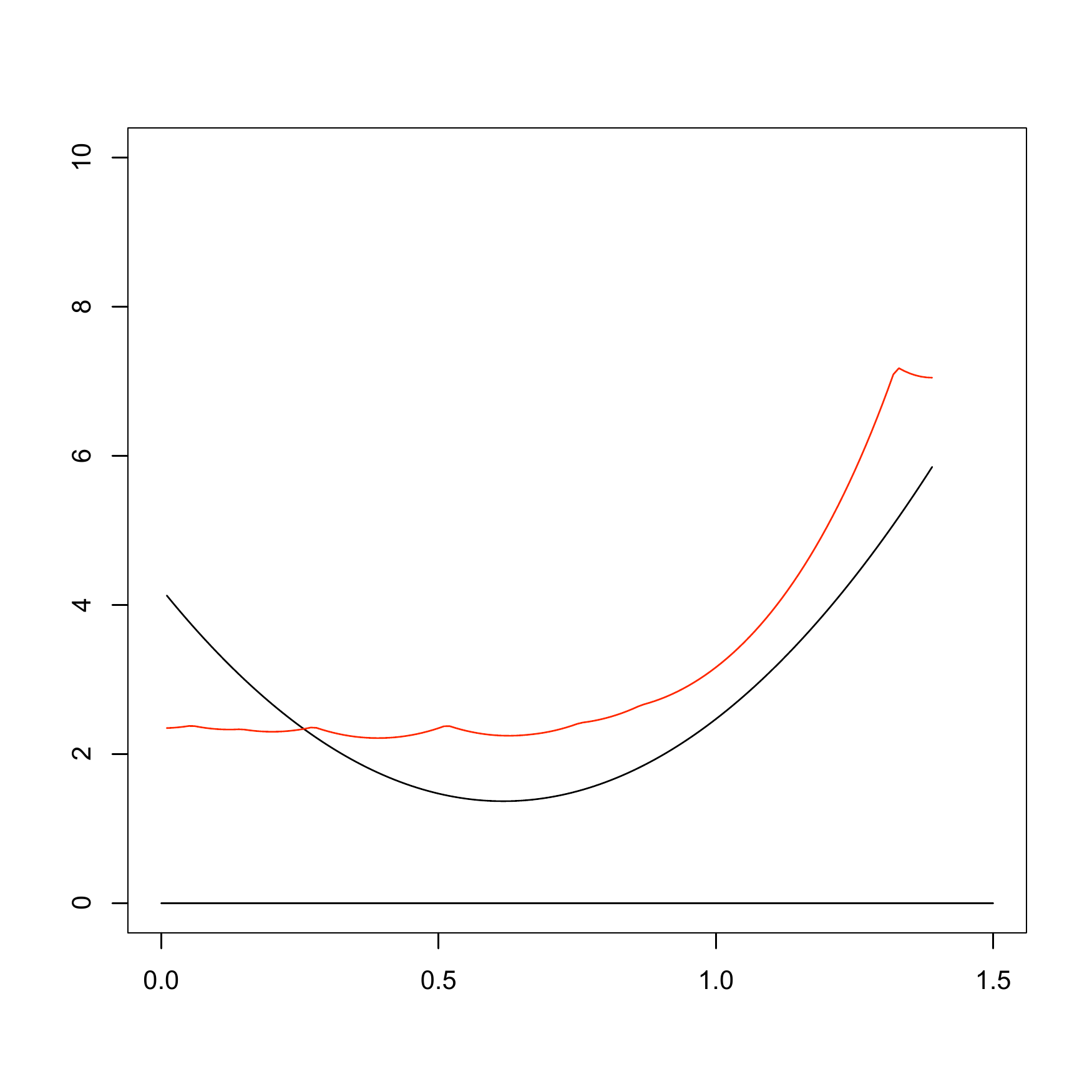}
\includegraphics[width=4cm]{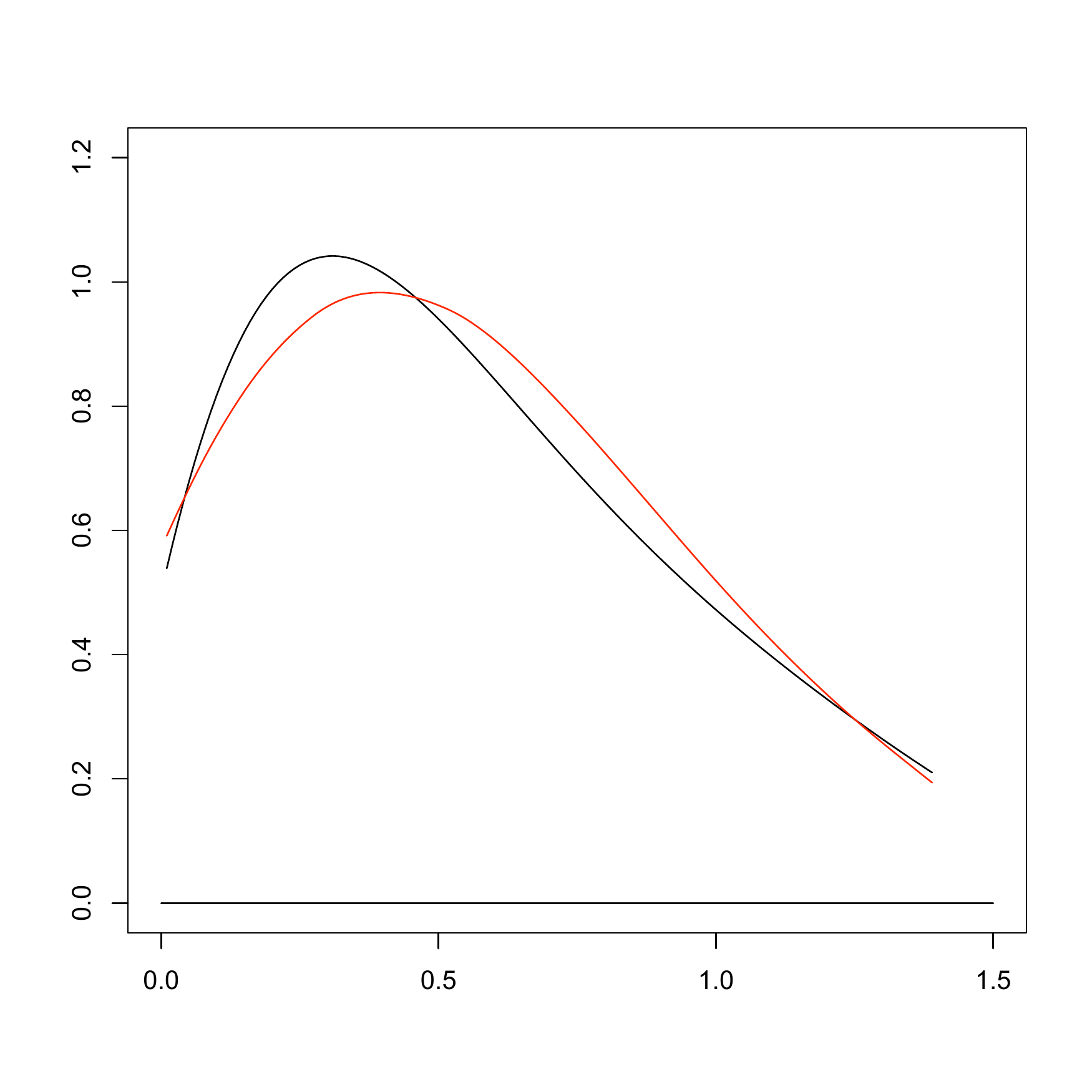}
\end{center}
\caption{The same pictures as in Figure \ref{fig:hazard2}, but with the boundary constrained $\bar{h}_n$ instead of $\check{h}_n$.}
\label{fig:hazard3}
\end{figure}

These pictures suggest that $\bar{h}$ behaves better than $\check{h}$. The following two results confirm this asymptotically. Theorem \ref{th:Wood1} shows that $\check h_n$ and $\bar h_n$ both estimate $h_0$ uniformly consistently. Theorem \ref{th:consistency_derivative} states that $\bar{h}_n^\prime$ does estimate $h_0^\prime$ consistently on the interval $[0,a]$, whereas $\check h_n^\prime$ is inconsistent at the boundaries $0$ and $a$.

\begin{thm}
\label{th:Wood1}
Let $\a_n,\b_n\asymp n^{-2/3}$, let  $\hat h_n$ be the nondecreasing minimizer of (\ref{LS_criterion2}), and let $\l=\l_n\to0$ as $n\to\infty$. Furthermore, assume that $h_0$ is continuously differentiable on $(0,a)$, with finite right and left limits at $0$ and $a$, respectively.
Then, if $\check h_n$ and $\bar h_n$  are the minimizers of Corollary \ref{lem:variation}, we have for each $x\in[0,a]$:
$$
\sup_{[0,a]}|\check h_n(x)-h_0(x)|\stackrel{p}\longrightarrow 0 \mbox{ and }\sup_{[0,a]}|\bar h_n(x)-h_0(x)|\stackrel{p}\longrightarrow 0,\,n\to\infty,
$$
\end{thm}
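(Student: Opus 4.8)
The plan is to reduce the statement to the asymptotics of the kernel-type object $h_1$ from (\ref{eq:h1}) and of the boundary constants $c_1,c_2$, most of which is already in hand; the one genuinely new ingredient will be an upgrade of the consistency of $h_1$ to a bound that is uniform over the \emph{entire} interval $[0,a]$, boundary points included. I would start from the representation supplied by Corollary \ref{lem:variation},
$$
\check h_n(x)=h_1(x)+\check c_1 e^{-x/\sqrt{\l}}+\check c_2 e^{-(a-x)/\sqrt{\l}},\qquad
\bar h_n(x)=h_1(x)+\bar c_1 e^{-x/\sqrt{\l}}+\bar c_2 e^{-(a-x)/\sqrt{\l}},
$$
note that $0\le e^{-x/\sqrt\l}\le1$ and $0\le e^{-(a-x)/\sqrt\l}\le1$ on $[0,a]$, and recall from Lemma \ref{lem:asympc} that $\check c_1,\bar c_1\stackrel{p}\longrightarrow\tfrac12 h_0(0)$ and $\check c_2,\bar c_2\stackrel{p}\longrightarrow\tfrac12 h_0(a)$. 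Granting these, it suffices to prove the single claim
$$
\sup_{x\in[0,a]}\Bigl|\,h_1(x)+\tfrac12 h_0(0)e^{-x/\sqrt\l}+\tfrac12 h_0(a)e^{-(a-x)/\sqrt\l}-h_0(x)\,\Bigr|\stackrel{p}\longrightarrow 0 .
$$

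For the claim, I would first replace $\hat h_n$ by $h_0$ in (\ref{eq:h1}): because $\tfrac12\l^{-1/2}\int_0^a e^{-|x-y|/\sqrt\l}\,dy\le1$, this substitution alters $h_1$ by at most $\sup_{[0,a]}|\hat h_n-h_0|=O_P(n^{-1/4})$ (Corollary \ref{lem:conshn}), uniformly in $x$. For the deterministic remainder $h_1^0(x)=\tfrac12\l^{-1/2}\int_0^a e^{-|x-y|/\sqrt\l}h_0(y)\,dy$ I would extend $h_0$ to $\R$ by the constant $h_0(0)$ on $(-\infty,0)$ and the constant $h_0(a)$ on $(a,\infty)$, obtaining a bounded, uniformly continuous $\bar h_0$. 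Writing $K(u)=\tfrac12 e^{-|u|}$ and $K_b(u)=b^{-1}K(u/b)$, and evaluating $\tfrac12\l^{-1/2}\int_{-\infty}^0 e^{-(x-y)/\sqrt\l}\,dy=\tfrac12 e^{-x/\sqrt\l}$ and $\tfrac12\l^{-1/2}\int_a^\infty e^{-(y-x)/\sqrt\l}\,dy=\tfrac12 e^{-(a-x)/\sqrt\l}$ for $x\in[0,a]$, one gets
$$
(\bar h_0*K_{\sqrt\l})(x)=h_1^0(x)+\tfrac12 h_0(0)e^{-x/\sqrt\l}+\tfrac12 h_0(a)e^{-(a-x)/\sqrt\l}.
$$
Hence the supremum in the claim equals $\sup_{x\in[0,a]}\bigl|(\bar h_0*K_{\sqrt\l})(x)-\bar h_0(x)\bigr|+O_P(n^{-1/4})$, and since $K_{\sqrt\l}$ is an approximate identity as $\l\to0$ and $\bar h_0$ is bounded and uniformly continuous, the first term tends to $0$ — bound it by $\int K(u)\,\omega_{\bar h_0}(\sqrt\l\,|u|)\,du$ and apply dominated convergence, where $\omega_{\bar h_0}$ is the modulus of continuity of $\bar h_0$.

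Combining, on $[0,a]$ we have
$$
\check h_n(x)-h_0(x)=\Bigl\{h_1(x)+\tfrac12 h_0(0)e^{-x/\sqrt\l}+\tfrac12 h_0(a)e^{-(a-x)/\sqrt\l}-h_0(x)\Bigr\}+\bigl(\check c_1-\tfrac12 h_0(0)\bigr)e^{-x/\sqrt\l}+\bigl(\check c_2-\tfrac12 h_0(a)\bigr)e^{-(a-x)/\sqrt\l},
$$
so that, using $e^{-x/\sqrt\l},e^{-(a-x)/\sqrt\l}\le1$, $\sup_{[0,a]}|\check h_n-h_0|$ is bounded by the supremum in the claim plus $|\check c_1-\tfrac12 h_0(0)|+|\check c_2-\tfrac12 h_0(a)|$, all of which converge to $0$ in probability; the same computation with $\bar c_1,\bar c_2$ disposes of $\bar h_n$. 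The main obstacle — the only point that must be checked rather than quoted — is the uniform boundary bookkeeping in the claim: one has to recognize that truncating the exponential kernel to $[0,a]$ under-weights $h_1$ near $0$ and $a$ by exactly $\tfrac12 h_0(0)e^{-x/\sqrt\l}$ and $\tfrac12 h_0(a)e^{-(a-x)/\sqrt\l}$ respectively, and that these are precisely the amounts restored by the correction terms $c_1e^{-x/\sqrt\l}$ and $c_2e^{-(a-x)/\sqrt\l}$; on any closed subinterval of $(0,a)$ the conclusion already follows from Lemma \ref{lem:unifsmallerset}, and the rest is a routine approximate-identity estimate.
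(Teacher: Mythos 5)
Your proof is correct, and its skeleton coincides with the paper's: both reduce the statement to the representation $h_n=h_1+c_1e^{-x/\sqrt\l}+c_2e^{-(a-x)/\sqrt\l}$, invoke Corollary \ref{lem:conshn} to swap $\hat h_n$ for $h_0$ at cost $O_P(n^{-1/4})$ uniformly, and invoke Lemma \ref{lem:asympc} for $\check c_1,\bar c_1\stackrel{p}\to\tfrac12h_0(0)$, $\check c_2,\bar c_2\stackrel{p}\to\tfrac12h_0(a)$. Where you genuinely diverge is in the treatment of the deterministic bias and the boundary terms. The paper splits the error into $I_n^{(1)}+I_n^{(2)}+I_n^{(3)}$, bounds $I_n^{(3)}$ by $|c_1-\tfrac12h_0(x)|e^{-x/\sqrt\l}+|c_2-\tfrac12h_0(x)|e^{-(a-x)/\sqrt\l}$, and then runs a three-zone case analysis on $x\in[0,\l^{1/4}]$, $[\l^{1/4},a-\l^{1/4}]$, $[a-\l^{1/4},a]$ to reconcile $h_0(x)$ with $h_0(0)$ or $h_0(a)$ in each zone. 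You instead extend $h_0$ by constants to $\bar h_0$ on $\R$ and verify the exact identity $(\bar h_0*K_{\sqrt\l})(x)=h_1^0(x)+\tfrac12h_0(0)e^{-x/\sqrt\l}+\tfrac12h_0(a)e^{-(a-x)/\sqrt\l}$, which collapses bias and boundary correction into a single approximate-identity estimate $\int K(u)\,\omega_{\bar h_0}(\sqrt\l|u|)\,du\to0$. This buys two things: it eliminates the case analysis entirely, and it only uses uniform continuity of $h_0$ on $[0,a]$, whereas the paper's bound on $I_n^{(2)}$ quietly invokes $\sup_{[0,a]}|h_0''|$, which is not among the stated hypotheses of the theorem (a first-order or modulus-of-continuity bound, as you use, is what the hypotheses actually support). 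The paper's zone-splitting, on the other hand, is the version that generalizes directly to the derivative analysis in Theorem \ref{th:consistency_derivative}, where the exact cancellation you exploit no longer occurs.
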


\begin{thm}
\label{th:consistency_derivative}
Let $\bar h_n$and $\check h_n$ be the boundary constrained minimizers of Corollary \ref{lem:variation}. Then, under the conditions of Theorem \ref{th:Wood1} and $n^{1/2}\l_n\rightarrow\infty$, we have for  each $0<\delta<a/2$ that
\begin{equation}
\label{eq:unifcheckh}
\sup_{[\delta,a-\delta]}|\check{h}_n^\prime (x)-h_0^\prime(x)|\stackrel{p}\longrightarrow 0 \mbox{ and } \sup_{[\delta,a-\delta]}|\bar h_n^\prime (x)-h_0^\prime(x)|\stackrel{p}\longrightarrow 0,\,n\to\infty.
\end{equation}
Moreover, $\check{h}_n^\prime(0)\stackrel{p}\longrightarrow 0$ and $\bar h_n^\prime(0)\stackrel{p}\longrightarrow h_0^\prime(0)$.
\end{thm}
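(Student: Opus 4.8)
The plan is to work from the explicit representation \eqref{eq:partsol} of $\check h_n$ and $\bar h_n$. Differentiating gives
$$
\check h_n'(x)=h_1'(x)-\frac{\check c_1}{\sqrt\lambda}e^{-x/\sqrt\lambda}+\frac{\check c_2}{\sqrt\lambda}e^{-(a-x)/\sqrt\lambda},
$$
and the analogue with $(\bar c_1,\bar c_2)$ for $\bar h_n'$. By Lemma \ref{lem:asympc} all four constants are $O_P(1)$ (they converge in probability to $\tfrac12h_0(0)$ or $\tfrac12h_0(a)$), while $\lambda^{-1/2}e^{-\delta/\sqrt\lambda}\to0$; hence on $[\delta,a-\delta]$ the two exponential terms are $o_P(1)$ uniformly, and the first display in \eqref{eq:unifcheckh} reduces, for both estimators, to proving $\sup_{[\delta,a-\delta]}|h_1'(x)-h_0'(x)|\stackrel{p}\longrightarrow0$.

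For that I would write $\hat h_n=h_0+r_n$ with $\|r_n\|_\infty:=\sup_{[0,a]}|\hat h_n-h_0|=O_P(n^{-1/4})$ by Corollary \ref{lem:conshn}. By linearity of \eqref{eq:h1}, $h_1=h_{1,0}+R_n$, where $h_{1,0}$ and $R_n$ are the same Laplace-kernel convolution applied to $h_0\mathbf 1_{[0,a]}$ and to $r_n\mathbf 1_{[0,a]}$; differentiating the integral in \eqref{eq:h1} and bounding the exponentials trivially gives $\sup_{[0,a]}|R_n'|\le\lambda^{-1/2}\|r_n\|_\infty=O_P\bigl(n^{-1/4}\lambda^{-1/2}\bigr)=o_P(1)$, and it is exactly here that $n^{1/2}\lambda_n\to\infty$ enters. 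It then remains to check the deterministic fact $\sup_{[\delta,a-\delta]}|h_{1,0}'(x)-h_0'(x)|\to0$: after the substitutions $y=x\mp v\sqrt\lambda$, $h_{1,0}'(x)$ becomes a boundary-leakage term of size $O(\lambda^{-1/2}e^{-\delta/\sqrt\lambda})$ plus a difference of integrals of $v e^{-v}$ against the increments of $h_0$ over ranges up to $x/\sqrt\lambda$ and $(a-x)/\sqrt\lambda$, and one passes to the limit by dominated convergence using $\int_0^\infty v e^{-v}\,dv=1$ and the uniform continuity and boundedness of $h_0'$ on $[0,a]$ (it extends continuously by hypothesis). This is the kernel-smoothing argument already used in the proof of Theorem \ref{th:kernel_est}, and it finishes the interior part.

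For the boundary, $\check h_n'(0)=0$ (and $\check h_n'(a)=0$) holds identically: these are the natural boundary conditions for the unconstrained minimization of $\Phi_\lambda$, which one also reads off by combining the two stationarity equations in the proof of Corollary \ref{lem:variation} with the Euler equation \eqref{2nd_order_eq}; thus $\check h_n'(0)\stackrel{p}\longrightarrow0$ is automatic. For $\bar h_n'$ I would start from $\bar h_n'(0)=h_1'(0)-\bar c_1/\sqrt\lambda+(\bar c_2/\sqrt\lambda)e^{-a/\sqrt\lambda}$; the last term is $o_P(1)$ since $\bar c_2=O_P(1)$, and by Lemma \ref{lem:asympc}, $\bar c_1=\hat h_n(0)-h_1(0)+o_P(e^{-a/\sqrt\lambda})$. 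Evaluating \eqref{eq:derivrel} at $x=0$ yields the identity $h_1(0)=\sqrt\lambda\,h_1'(0)$, so $\bar c_1/\sqrt\lambda=\hat h_n(0)/\sqrt\lambda-h_1'(0)+o_P(1)$ and therefore
$$
\bar h_n'(0)=2h_1'(0)-\frac{\hat h_n(0)}{\sqrt\lambda}+o_P(1).
$$
Now $2h_1'(0)=\lambda^{-1}\int_0^a e^{-y/\sqrt\lambda}\hat h_n(y)\,dy=\lambda^{-1/2}\int_0^{a/\sqrt\lambda}e^{-u}\hat h_n(u\sqrt\lambda)\,du$ (substituting $y=u\sqrt\lambda$), while $\hat h_n(0)/\sqrt\lambda=\lambda^{-1/2}\int_0^{a/\sqrt\lambda}e^{-u}\hat h_n(0)\,du+\lambda^{-1/2}\hat h_n(0)e^{-a/\sqrt\lambda}$; subtracting and writing $\hat h_n(u\sqrt\lambda)-\hat h_n(0)=(h_0(u\sqrt\lambda)-h_0(0))+(r_n(u\sqrt\lambda)-r_n(0))$, the $r_n$-part is $O_P(\lambda^{-1/2}\|r_n\|_\infty)=o_P(1)$ (again $n^{1/2}\lambda_n\to\infty$) and the $h_0$-part equals $\int_0^{a/\sqrt\lambda}u e^{-u}\bigl(\int_0^1 h_0'(\theta u\sqrt\lambda)\,d\theta\bigr)\,du\to h_0'(0)\int_0^\infty u e^{-u}\,du=h_0'(0)$ by dominated convergence. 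Hence $\bar h_n'(0)\stackrel{p}\longrightarrow h_0'(0)$.

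The delicate point is this last step: $h_1'(0)$ and $\hat h_n(0)/\sqrt\lambda$ are each of exact order $\lambda^{-1/2}$ and must cancel down to a finite limit which moreover comes out exactly equal to $h_0'(0)$; this requires the identity $h_1(0)=\sqrt\lambda\,h_1'(0)$, the sharp asymptotics of $\bar c_1$ from Lemma \ref{lem:asympc}, and $L^\infty$-control of $\hat h_n-h_0$ all the way to the boundary — the second place where $n^{1/2}\lambda_n\to\infty$ is essential. By comparison, the interior estimate is routine once $R_n'$ has been absorbed with the same bandwidth condition, and the $\check h_n'(0)$ statement is immediate from the variational characterization.
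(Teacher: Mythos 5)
Your proposal is correct, and for the interior statement and for $\bar h_n'(0)$ it is essentially the paper's argument: the paper likewise reduces \eqref{eq:unifcheckh} to $h_1'$ via the $O_P(1)$ constants and $\lambda^{-1/2}e^{-\delta/\sqrt\lambda}\to0$, bounds the stochastic part by $V_n/\sqrt\lambda=O_P(n^{-1/4}\lambda^{-1/2})$ (the same place where $n^{1/2}\lambda_n\to\infty$ enters), and for $\bar h_n'(0)$ uses exactly your identity $h_1(0)=\sqrt\lambda\,h_1'(0)$ together with Lemma \ref{lem:asympc} to arrive at $\int_0^{a/\sqrt\lambda}\lambda^{-1/2}\bigl(\hat h_n(y\sqrt\lambda)-\hat h_n(0)\bigr)e^{-y}\,dy+o_P(1)\to h_0'(0)$. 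Where you genuinely depart from the paper is the treatment of $\check h_n'(0)$: the paper proves only $\check h_n'(0)=o_P(1)$ by an asymptotic cancellation between $h_1'(0)$ and $\check c_1/\sqrt\lambda$, using the expansions \eqref{eq:checkc1as}--\eqref{eq:checkc2as}, whereas you observe that the unconstrained minimizer satisfies the natural boundary conditions $\check h_n'(0)=\check h_n'(a)=0$ exactly; this is indeed correct (combine the two stationarity equations of Corollary \ref{lem:variation} with $\check h_n''=\lambda^{-1}(\check h_n-\hat h_n)$ and integrate by parts to get $\check h_n'(0)=\check h_n'(a)e^{-a/\sqrt\lambda}$ and $\check h_n'(a)=\check h_n'(0)e^{-a/\sqrt\lambda}$, hence both vanish), and it is cleaner and avoids the delicate division of the $o_P(1)$ remainder in Lemma \ref{lem:asympc} by $\sqrt\lambda$ that the paper's route implicitly requires. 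A second minor advantage of your version is that the deterministic interior term is handled through the uniform continuity and boundedness of $h_0'$ only, in line with the hypotheses of Theorem \ref{th:Wood1}, while the paper's displayed bound invokes $\sup_{[0,a]}|h_0''|$, which is not among the stated assumptions.
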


\appendix
\section{Appendix Section}
\begin{lemma}
\label{lem:locsum}
Let $W_n$ be as defined in (\ref{eq:convbrown}) and assume the conditions of Theorem \ref{th:optimal_penalization}.
Consider the process
$$
[0,\nu n^{1/3})\ni t\mapsto \tilde{V}_n(t)=n^{1/3}V_n(t)=\frac{W_n(t)}{t}+\frac{\alpha}{t}+\tfrac12h_0^{\prime}(\xi_n)t
$$
where $\xi_n\in[0,\nu]$.
Then, by choosing $\epsilon>0$ sufficiently small and $M>0$ sufficiently large, for all large $n$
\begin{equation}
\label{eq:ineqVtilde}
\inf_{t\in [0,\epsilon]}\tilde{V}_n(t) > \tilde{V}_n(1) \mbox{ and }\inf_{t\in[M,\nu n^{1/3}]}\tilde{V}_n(t)> \tilde{V}_n(1)
\end{equation}
with probability arbitrarily close to one. This implies that with probability arbitrarily close to one for all large $n$
$$
\inf_{t\in[0,\nu n^{1/3}]}\tilde{V}_n(t)=\inf_{t\in(\epsilon,M]}\tilde{V}_n(t).
$$
\end{lemma}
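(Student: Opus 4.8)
The plan is to establish the two inequalities in (\ref{eq:ineqVtilde}) separately, using the two competing terms in $\tilde V_n(t)$: near $t=0$ the term $\alpha/t$ (together with control of $W_n(t)/t$) forces $\tilde V_n$ to be large, while near $t=\nu n^{1/3}$ the linear term $\tfrac12 h_0'(\xi_n)t$ dominates. First I would fix a reference value, say $t=1$, and note that $\tilde V_n(1)=W_n(1)+\alpha+\tfrac12 h_0'(\xi_n)$ is $O_P(1)$ by (\ref{eq:convbrown}) and the boundedness of $h_0'$ near $0$; so it suffices to show $\inf_{t\in[0,\epsilon]}\tilde V_n(t)$ and $\inf_{t\in[M,\nu n^{1/3}]}\tilde V_n(t)$ can each be made larger than any fixed constant with probability close to one, by choosing $\epsilon$ small and $M$ large.

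For the behavior near zero, the key is a uniform bound on $|W_n(t)|/t$ for $t$ in a shrinking neighborhood of $0$. Since $W_n$ converges weakly in $D([0,\infty))$ with the topology of uniform convergence on compacta to the process $t\mapsto W(h_0(0)t)$, and since for standard Brownian motion $W(h_0(0)t)/t\to 0$ a.s.\ as $t\downarrow0$ (by the law of the iterated logarithm, $|W(h_0(0)t)|=o(t^{1/2+\eta})$ hence $=o(t)$ is false, but $|W(h_0(0)t)|/t=O(t^{-1/2}\sqrt{\log\log(1/t)})$, which blows up — so one must be more careful). The correct statement is that $t\mapsto W_n(t)$ is tight near $0$ in the sense that $\sup_{t\in[0,\epsilon]}|W_n(t)|=O_P(\sqrt\epsilon)$ uniformly in $n$ (via a maximal inequality for the increments of $\sqrt{n/\delta_n}(\H_n-H_0)$, or by invoking the weak limit plus a modulus-of-continuity argument at $0$). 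Then $\alpha/t + W_n(t)/t \ge \alpha/t - C\sqrt\epsilon/t \ge (\alpha - C\sqrt\epsilon)/t$, which for $\epsilon$ small enough exceeds any prescribed level uniformly on $(0,\epsilon]$, with the linear term being negligible there. At $t=0$ itself one uses the convention that $\tilde V_n(0)=+\infty$ (consistent with $\hat h_n(0)$ being an infimum over $t>0$), so the infimum over $[0,\epsilon]$ is really over $(0,\epsilon]$ and the bound above applies.

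For the behavior at large $t$, I would split the range $[M,\nu n^{1/3}]$ and use that $\tfrac12 h_0'(\xi_n)t \ge c\, t$ for some fixed $c>0$ (since $h_0'$ is strictly positive and continuous at $0$, so $h_0'(\xi_n)\ge h_0'(0)/2$ for $n$ large and $\xi_n\in[0,\nu]$ with $\nu$ small). Meanwhile $\alpha/t\ge0$, and the stochastic term satisfies $\sup_{t\in[M,\nu n^{1/3}]}|W_n(t)|/t \le \sup_{t\ge M}|W_n(t)|/t$, which is $O_P(M^{-1/2})$ — again by a maximal inequality controlling $\sup_{M\le t\le T}|W_n(t)|/t$ uniformly in $n$ and $T$, exploiting that the empirical cumulative hazard increments have variance growing linearly. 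Hence on $[M,\nu n^{1/3}]$ we get $\tilde V_n(t)\ge ct - C'M^{-1/2}\cdot(t/M)^{\,?}$; more simply, $\tilde V_n(t) \ge c M - O_P(M^{-1/2})$ on that range (since $ct$ is increasing and $|W_n(t)|/t$ is small), which exceeds $\tilde V_n(1)=O_P(1)$ once $M$ is large. Finally, combining (\ref{eq:ineqVtilde}) with the trivial fact that $\tilde V_n(1)\ge \inf_{t\in(\epsilon,M]}\tilde V_n(t)$ (as $1\in(\epsilon,M]$ for $\epsilon<1<M$) yields $\inf_{t\in[0,\nu n^{1/3}]}\tilde V_n(t)=\inf_{t\in(\epsilon,M]}\tilde V_n(t)$.

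The main obstacle is the uniform-in-$n$ control of $\sup_t |W_n(t)|/t$ on the two extreme ranges $(0,\epsilon]$ and $[M,\nu n^{1/3}]$: weak convergence alone on compacta does not give tightness near $0$ or control out to $\nu n^{1/3}$, so I would need an explicit maximal inequality (e.g.\ a Doob- or Dvoretzky–Kiefer–Wolfowitz-type bound) for $\sqrt{n/\delta_n}\,(\H_n - H_0)(\delta_n t)$, or equivalently for the localized empirical process of the $X_i$'s, showing $\E\big[\sup_{0<t\le\epsilon}|W_n(t)|\big]\le C\sqrt\epsilon$ and $\E\big[\sup_{t\ge M}|W_n(t)|/t\big]\le C M^{-1/2}$ with $C$ not depending on $n$. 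Everything else is routine given the local assumptions on $h_0$ at $0$ and the weak limit (\ref{eq:convbrown}).
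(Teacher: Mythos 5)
Your two-regime decomposition (the $\alpha/t$ term dominating near $t=0$, the linear drift dominating for large $t$, with $\tilde V_n(1)=O_P(1)$ as the benchmark) is exactly the structure of the paper's proof, and you correctly locate the ``main obstacle.'' But you do not resolve it, and the bounds you propose are not quite the right ones. Near zero the situation is simpler than you suggest: no uniform-in-$n$ maximal inequality is needed, because $\epsilon$ is fixed \emph{before} $n\to\infty$. Weak convergence (\ref{eq:convbrown}) on the fixed compact $[0,\epsilon]$ plus the continuous mapping theorem give $\inf_{[0,\epsilon]}W_n(t)\Dconv\inf_{[0,\epsilon]}W(h_0(0)t)=-\sqrt{h_0(0)\epsilon}\,|Z|$ with $Z\sim N(0,1)$; for $\epsilon$ small this exceeds $-\alpha/2$ with probability close to one, whence $\tilde V_n(t)\ge\bigl(\alpha+\inf_{[0,\epsilon]}W_n\bigr)/t\ge\alpha/(2\epsilon)$ on $(0,\epsilon]$. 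That is the paper's (shorter) route; your digression on the law of the iterated logarithm is beside the point since only a lower bound on $W_n(t)+\alpha$ is needed, not control of $W_n(t)/t$ itself.

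The genuine gap is in the range $[M,\nu n^{1/3}]$, which weak convergence on compacta indeed does not reach. You assert $\sup_{t\in[M,\nu n^{1/3}]}|W_n(t)|/t=O_P(M^{-1/2})$ uniformly in $n$ but give no proof, and this particular bound is not obtainable by the natural argument: splitting into unit blocks $[j,j+1]$ and applying Chebyshev with the second-moment maximal inequality $\E\sup_{[j,j+1]}|W_n|^2\le C'(j+1)$ (Kim--Pollard, inequality 3.1(ii), after translating $W_n$ back to $\F_n-F_0$ via $|\log(1-u)-\log(1-v)|\le2|u-v|$) produces the divergent series $\sum_j(j+1)j^{-2}$ for your functional. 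The paper's proof instead exploits the \emph{quadratic} growth of the drift: if $\tilde V_n(t)<C$ for some $t\in[j,j+1]$ with $j\ge M$ large, then necessarily $|W_n(t)|\gtrsim h_0'(0)t^2$, so the same blockwise Chebyshev bound yields $\sum_{j\ge M}(j+1)j^{-4}$, which converges and tends to $0$ as $M\to\infty$. In short, the decisive step is to compare $W_n(t)$ with $t^2$ rather than with $t$, combined with an explicit moment maximal inequality over unit blocks; without that, the second inequality in (\ref{eq:ineqVtilde}) is not established.
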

\begin{proof} From (\ref{eq:convbrown}) it follows that $\tilde{V}_n(1)=O_P(1)$. Also from (\ref{eq:convbrown}), we have that for any $\epsilon>0$,
$$
\inf_{[0,\epsilon]}W_n(t)\Dconv  \inf_{[0,\epsilon]}W(h_0(0)t)=^\D -\sqrt{h_0(0)\epsilon}|Z|
$$
where $Z\sim N(0,1)$. Hence, with probability arbitrarily close to one,\\ $\inf_{[0,\epsilon]}W_n(t)>-\alpha/2$, implying
$$
|\tilde{V}_n(t)|\ge \frac{\alpha+\inf_{s\in[0,\epsilon]}W_n(s)}{\epsilon}\ge\frac{\alpha}{2\epsilon} \mbox{ for } t\in (0,\epsilon],
$$
proving the first statement in (\ref{eq:ineqVtilde}).

For the second statement, it suffices to show that for any $C$, the probability
$$
P\left(\inf_{[M,\nu n^{1/3}]}\tilde{V}_n(t)<C\right)\le\sum_{j=M}^{\nu n^{1/3}}P\left(\inf_{[j,j+1]}\tilde{V}_n(t)<C\right)
$$
can be made arbitrarily small by taking $M>0$ sufficiently large. Fix $C>0$ and take $M>0$ sufficiently large such that for all $t\ge M$, $(t+1)-\alpha/C-\tfrac14h_0^\prime(0)t^2<-\tfrac15h_0^\prime(0)t^2$. Then, using that $h_0^\prime(x)\ge \tfrac12h_0^\prime(0)$ on $[0,\nu]$ and taking $j\ge M$,
\begin{eqnarray*}
&&\inf_{[j,j+1]}\tilde{V}_n(t)<C\Rightarrow \exists t\in [j,j+1]\,:\,\tilde{V}_n(t)<C\\
&&\,\,\,\,\,\Rightarrow \exists t\in [j,j+1]\,:\,W_n(t)<Ct-\alpha-\tfrac14Ch_0^{\prime}(0)t^2\le-\tfrac15Ch_0^{\prime}(0)t^2\\
&&\,\,\,\,\,\Rightarrow \exists t\in [j,j+1]\,:\,n^{2/3}\left|\H_n(n^{-1/3}t)-H_0(n^{-1/3}t)\right|>\tfrac15Ch_0^{\prime}(0)j^2\\
&&\,\,\,\,\,\Rightarrow  \exists t\in [j,j+1]\,:\,n^{2/3}\left|\F_n(n^{-1/3}t)-F_0(n^{-1/3}t)\right|>\tfrac1{10}Ch_0^{\prime}(0)j^2
\end{eqnarray*}
where in the last implication we use that $|\log(1-u)-\log(1-v)|\le 2|u-v|$ for $0\le u,v\le 1/2$. Using Markov's inequality, we obtain for $j\ge M$
\begin{align*}
&\!P\left(\inf_{[j,j+1]}\!\!\tilde{V}_n(t)<C\right)\le P\left(\sup_{[j,j+1]}\!\!n^{2/3}\!\left|\F_n(n^{-1/3}t)-F_0(n^{-1/3}t)\right|>\frac{Ch_0^{\prime}(0)j^2}{10}\right)\\
&\quad\quad\quad\le \frac{n^{4/3}E\sup_{[j,j+1]}\left|\F_n(n^{-1/3}t)-F_0(n^{-1/3}t)\right|^2}{\left(\tfrac1{10}Ch_0^{\prime}(0)j^2\right)^2}\\
&\quad\quad\quad\le 100\frac{n^{4/3}E\sup_{[j,j+1]}\left|\F_n(n^{-1/3}t)-F_0(n^{-1/3}t)\right|^2}{C^2h_0^{\prime}(0)^2j^4}.
\end{align*}
By maximal inequality 3.1(ii) in \cite{KP:90}, the numerator in this expression is bounded by  $C^\prime(j+1)$, giving
$$
\!P\left(\inf_{[M,\nu n^{1/3}]}\!\!\tilde{V}_n(t)<C\right)\le\!\sum_{j=M}^{\nu n^{1/3}}\!P\!\left(\inf_{[j,j+1]}\!\tilde{V}_n(t)<C\right)\le
\frac{100C^\prime}{C^2 h_0^{\prime}(0)^2}\!\sum_{j=M}^{\infty}(j+1)j^{-4}
$$
which can be made arbitrarily small by taking $M$ sufficiently large.\end{proof}

\noindent
\textsc{Proof} of Theorem \ref{th:Wood1}.
For $x\in[0,a]$, we have for $h_n$ either $\check h_n$ or $\bar h_n$,
\begin{align*}
&\!\!|{h}_n(x)-h_0(x)|\!=\!\\&\,\,\,\left|\tfrac12\int_{-x/\sqrt{\lambda}}^{(a-x)/\sqrt{\lambda}}\!\hat{h}_n(x+\sqrt{\lambda}v)e^{-|v|}\,dv\!+\!{c}_1e^{-x/\sqrt{\lambda}}\!+\!
{c}_2e^{-(a-x)/\sqrt{\lambda}}\!-\!h_0(x)\right|\\
&\,\,\,\le\tfrac12\int_{-x/\sqrt{\lambda}}^{(a-x)/\sqrt{\lambda}}|\hat{h}_n(x+\sqrt{\lambda}v)-{h}_0(x+\sqrt{\lambda}v)|e^{-|v|}\,dv+\\
&\quad\quad+
\tfrac12\left|\int_{-x/\sqrt{\lambda}}^{(a-x)/\sqrt{\lambda}}(h_0(x+\sqrt{\lambda}v)-{h}_0(x))e^{-|v|}\,dv\right|+\\
&\quad\quad+\left|h_0(x)\left(\tfrac12\int_{-x/\sqrt{\lambda}}^{(a-x)/\sqrt{\lambda}}e^{-|v|}\,dv-1\right)+{c}_1e^{-x/\sqrt{\lambda}}+
{c}_2e^{-(a-x)/\sqrt{\lambda}}\right|\\&\,\,\,= I_n^{(1)}+I_n^{(2)}+I_n^{(3)}
\end{align*}
First, observe that by Corollary \ref{lem:conshn} and the assumed smoothness of $h_0$, for $n\rightarrow\infty$
$$
I_n^{(1)}\le\sup_{[0,a]}|\hat{h}_n(y)-h_0(y)|\stackrel{p}\longrightarrow0 \mbox{ and }I_n^{(2)}\le \tfrac12\sup_{[0,a]}|h_0^{\prime\prime}(y)|\l\int _0^{\infty} v^2e^{-v}\,dv\rightarrow0,
$$
where the upper bounds do not depend on $x$. Furthermore, note that
\begin{align*}
&I_n^{(3)}=\left|-\tfrac12h_0(x)\left(\int_{-\infty}^{-x/\sqrt{\lambda}}+\int_{(a-x)/\sqrt{\lambda}}^{\infty}\right)e^{-|v|}\,dv+{c}_1e^{-x/\sqrt{\lambda}}+
{c}_2e^{-(a-x)/\sqrt{\lambda}}\right|\\
&\,\,\,\,\,\,\,\,\le |{c}_1-\tfrac12h_0(x)|e^{-x/\sqrt{\lambda}}+|{c}_2-\tfrac12h_0(x)|e^{-(a-x)/\sqrt{\lambda}}.
\end{align*}
Therefore, for $0\le x\le \l^{1/4}\le a/2$, we have for
\begin{eqnarray*}
I_n^{(3)}&\le&|{c}_1-\tfrac12h_0(0)+\tfrac12(h_0(0)-h_0(x))|+(|{c}_2|+\tfrac12h_0(a))e^{-a/2\sqrt{\lambda}}\\
&\le&|{c}_1-\tfrac12h_0(0)|+\tfrac12\l^{1/4}\sup_{[0,a]}|h_0^\prime(y)| +(|{c}_2|+\tfrac12h_0(a))e^{-a/2\sqrt{\lambda}}\stackrel{p}\longrightarrow0
\end{eqnarray*}
by (\ref{eq:checkc1as}), where this upper bound is again independent of $x\in[0,\lambda^{1/4}]$. For $x\in[a-\lambda^{1/4},a]$ a similar argument yields an upper bound that does not depend on $x$ and converges to zero in probability. For $\lambda^{1/4}\le x\le a-\l^{1/4}$, we have
\begin{eqnarray*}
I_n^{(3)}&\le&(|{c}_1|+\tfrac12h_0(a))e^{-1/\lambda^{1/4}}+(|{c}_2|+\tfrac12h_0(a))e^{-1/\lambda^{1/4}}=O_P(e^{-1/\l^{1/4}}),
\end{eqnarray*}
again with an upper bound not depending on $x$. These inequalities, combined with Lemma \ref{lem:asympc} lead to (\ref{eq:unifcheckh}).\hfill$\Box$

\noindent
\textsc{Proof} of Theorem \ref{th:consistency_derivative}.
Using the expression for $h_1^\prime$ implicit in (\ref{eq:derivrel}), we get for $x\in(0,a)$
$$ h_1^\prime(x)=\frac1{2\sqrt{\l}}\left(\int_0^{(a-x)/\sqrt{\l}}\hat{h}_n(x+y\sqrt{\l})e^{-y}\,dy-\int_0^{x/\sqrt{\l}}\hat{h}_n(x-y\sqrt{\l})e^{-y}\,dy\right).
$$
Fix $0<\delta<a/2$. Using Corollary \ref{lem:conshn},
\begin{equation}
\label{eq:Vn1}
V_n=\sup_{x\in[0,a]}\left|\hat h_n(x)-h_0(x)\right|=O_P\left(n^{-1/4}\right),
\end{equation}
we can write (for $\delta<x\le a/2$; the situation $a/2\le x<a-\delta$ is similar)
\begin{align*}
&\!|h_1^\prime(x)\!-\!h_0^\prime(x)|\le\left|\int_0^{x/\sqrt{\l}}\!\left(\frac{h_0(x+y\sqrt{\l})\!-\!h_0(x-y\sqrt{\l})\!+\!2V_n}{2\sqrt{\lambda}}\!-\!h_0^\prime(x)\right)\!e^{-y}\,dy \right|\\
&\,\,+\!h_0^{\prime}(x)e^{-x/\sqrt{\l}}\!+\!\frac{1}{2\sqrt{\l}}\left| \int_{x/\sqrt{\l}}^{(a-x)/\sqrt{\l}}\!\!\hat{h}_n(x\!+\!y\sqrt{\l})e^{-y}dy \right|\le\! \frac{V_n}{\sqrt{\l}}\!+\!\sup_{[0,a]}h_0^\prime(y)e^{-\delta/\sqrt{\l}}\\
&\,\,+\frac{\hat{h}_n(a)}{2\sqrt{\l}}e^{-\delta/\sqrt{\l}}+\left|\int_0^{x/\sqrt{\l}}\left((y-1)h_0^\prime(x)+y^2\sqrt{\l}\sup_{[0,a]}|h_0^{\prime\prime}(z)|\right)\,e^{-y}\,dy \right|.
\end{align*}
The first three terms in the upper bound are $o_P(1)$, uniformly in $x$, where we use that $\lambda$ does not converge to zero too rapidly. The same holds for the last term, since it is bounded by
$$
\sup_{[0,a]}h_0^\prime(z)\int_{\delta/\sqrt{\l}}^{\infty}|y-1|\,e^{-y}\,dy +
\sqrt{\l}\sup_{[0,a]}|h_0^{\prime\prime}(z)|\int_0^{\infty}y^2\,e^{-y}\,dy.
$$
Also using Lemma \ref{lem:asympc}, this proves (\ref{eq:unifcheckh}).

Now consider the situation at zero. First for the estimator  $\check h_n$. Note that, using (\ref{eq:derivrel}) and Lemma \ref{lem:asympc},
\begin{align*}
&\check{h}'(0)= h_1'(0)-\frac{\check{c}_1}{\sqrt{\l}}+o_P(1)=
\frac1{2\l}\int_0^a \hat h_n(x) e^{-x/\sqrt{\l}}\,dx+o_P(1)+\\
&\,\,\,\frac{-1}{\sqrt{\l}}\int_0^{a/\sqrt{\l}} e^{-x}\hat h_n(x\sqrt{\l})\,dx
+\frac1{\sqrt{\l}}\int_{x=0}^{a/\sqrt{\l}}e^{-x}\int_{y=0}^x \hat h_n(y\sqrt{\l})e^{-(x-y)}\,dy\,dx\\
&=\frac{-1}{2\sqrt{\l}}\int_0^{a/\sqrt{\l}} \!\hat h_n\left(x\sqrt{\l}\right) e^{-x}\,dx\!+\!\frac1{\sqrt{\l}}\int_{x=0}^{a/\sqrt{\l}}\!\!e^{-x}\int_{y=0}^x \!\hat h_n\left(y\sqrt{\l}\right)e^{-(x-y)}\,dy\,dx\\&\,\,\,+o_P(1)
= -\tfrac12h_0'(0)+\tfrac12h_0'(0)+o_P(1)=o_P(1),\,n\to\infty,
\end{align*}
where we also use (\ref{eq:Vn1}) to obtain the last line. So we get:
$
\check h_n'(0)\stackrel{p}\longrightarrow0$ for $n\to\infty
$,
which means that $\check h_n'$ is inconsistent at zero. The other boundary point $a$ can be treated in a similar way.
Finally, consider the behavior of $\bar h_n^\prime$ at zero. Using (\ref{eq:derivrel}) and Lemma \ref{lem:asympc}, we get
\begin{align*}
&\bar{h}_n^\prime(0)=h_1^{\prime}(0)-\lambda^{-1/2}\bar{c}+o_P(1)=(h_1(0)-\bar{c})/\sqrt{\l}+o_P(1)\\&\,\,\,\,=(2h_1(0)\!-\!\hat{h}_n(0))/\sqrt{\l}\!+\!o_P(1)
\!=\!\l^{-1}\!\!\int_0^a\!\hat{h}_n(y)\,e^{-y/\sqrt{\l}}\,dy\!-\!\l^{-1/2}\hat{h}_n(0)\!+\!o_P(1)\\&\,\,\,\,=\int_0^{a/\sqrt{\l}}\frac{\hat{h}_n(y\sqrt{\l})-\hat{h}_n(0)}{\sqrt{\l}} e^{-y}\,dy+o_P(1).
\end{align*}
Using (\ref{eq:Vn1}), note that
\begin{align*}
&\left|\int_0^{a/\sqrt{\l}}\frac{\hat{h}_n(y\sqrt{\l})-\hat{h}_n(0)}{\sqrt{\l}} e^{-y}\,dy-h_0^\prime(0)\right|\le\\&\,\,\,\,
\left|\int_0^{a/\sqrt{\l}}\left(\frac{h_0(y\sqrt{\l})-h_0(0)}{\sqrt{\l}}-h_0^{\prime}(0)\right) e^{-y}\,dy\right|+\frac{V_n}{\sqrt{\l}}+o_P(1)=o_P(1),
\end{align*}
under the assumptions of our theorem.
\hfill$\Box$


\begin{thebibliography}{9}

\bibitem{chern:64}
\textsc{Chernoff, H.} (1964). Estimation of the Mode. \textit{The Annals of Statistical Mathematics} \textbf{16} 31--41.

\bibitem{durot:08} \textsc{Durot, C.}
(2008). Testing Convexity or Concavity  of a Cumulated Hazard Rate. \textit{IEEE Transactions on Reliability} \textbf{57} 465--473.

\bibitem{EJZ:98} \textsc{Es, A.J. van, Jongbloed, G.} and \textsc{Zuijlen, M.C.A. van} (1998). Isotonic inverse estimators for nonparametric deconvolution. \textit{The Annals of Statistics} \textbf{26}  2395--2406.

\bibitem{piet:83} \textsc{Groeneboom, P.} (1983).
The concave majorant of Brownian motion,
\textit{The Annals of Probability} \textbf{11} 1016--1027.

\bibitem{piet:89}\textsc{Groeneboom, P.} (1989).
Brownian motion with a parabolic drift and Airy functions.
\textit{Probability Theory and Related Fields} \textbf{ 81} 79--109.

\bibitem{GroJo95} \textsc{Groeneboom, P.} and \textsc{Jongbloed, G.}
(1995). Isotonic estimation and rates of convergence in Wicksell's problem.
\textit{The Annals of Statistics} \textbf{23} 1518--1542.

\bibitem{GrJo10a} \textsc{Groeneboom, P.}and \textsc{Jongbloed, G.}
(2010). Generalized continuous isotonic regression.
\textit{Statistics and Probability Letters} \textbf{80} 248--253.

\bibitem{GrJo10b} \textsc{Groeneboom, P.} and \textsc{Jongbloed, G.}
(2010). Testing monotonicity of a hazard: asymptotic distribution theory.
Submitted.

\bibitem{GrJo10c} \textsc{Groeneboom, P.} and \textsc{Jongbloed, G.}
(2010). Isotonic $L_2$-projection test for local monotonicity of a hazard.
Submitted.

\bibitem{grwe:01} \textsc{Groeneboom, P.} and \textsc{Wellner, J.A.} (2001). Computing Chernoff's distribution. \textit{Journal of Computational and Graphical Statistics} \textbf{10} 388--400.

\bibitem{gh04} \textsc{Gijbels, I.} and \textsc{Heckman, N.} (2004). Nonparametric testing for a monotone hazard function via normalized spacings. \textit{Journal of Nonparametric Statistics} \textbf{16}  463--478.


\bibitem{hallk:05} \textsc{Hall, P.} and \textsc{Keilegom, I.\ van}
(2005). Testing for Monotone Increasing Hazard Rate. \textit{The Annals of Statistics} \textbf{33} 1109--1137.

\bibitem{kiefwolf:76} \textsc{Kiefer, J.} and \textsc{Wolfowitz, J.} (1976). Asymptotically minimax estimation of concave and convex distribution functions. \textit{Zeitschrift f\"ur Wahrscheinlichkeitstheorie und verwandte Gebiete} \textbf{32} 111--131.

\bibitem{KP:90} \textsc{Kim, J.} and \textsc{Pollard, D.} (1990). Cube root asymptotics. \textit{The Annals of Statistics} \textbf{18} 191--219.

\bibitem{neumeyer07} \textsc{Neumeyer, N.} (2007). A note on uniform consistency of monotone function estimators. \textit{Statistics and Probability Letters} \textbf{77} 693--703.

\bibitem{palwood:06} \textsc{Pal, J.K.} and \textsc{Woodroofe, M.}
(2006). On the distance between cumulative sum diagram and its greatest convex minorant for unequally spaced design points. \textit{ Scandinavian Journal of Statistics} \textbf{33} 279--291.

\bibitem{palwood:07} \textsc{Pal, J.K.} and \textsc{Woodroofe, M.}
(2007). Large sample properties of shape restricted regression estimators with smoothness adjustments. \textit{Statistica Sinica} \textbf{17} 1601--1616.

\bibitem{prakasarao:70} \textsc{Prakasa Rao, B.L.S.} (1970).
Estimation for distributions with monotone failure rate. \textit{The Annals of Mathematical Statistics} \textbf{41}, 507--519.

\bibitem{prospyke:67} \textsc{Proschan, F.} and \textsc{Pyke, R.}
(1967). Tests for monotone failure rate. \textit{Proceedings of the Fifth Berkeley Symposium on Mathematical Statistics and Probability} \textbf{3} 293--312.

\bibitem{ricerosen76} \textsc{Rice, J.} and \textsc{Rosenblatt, M.} (1976). Estimation of the log survivor function and hazard function. \textit{Sankhya A} \textbf{38} 60--78.

\bibitem{Robertson:88} \textsc{Robertson,
T., Wright, F.} and \textsc{Dykstra, R.}
(1972). \textit{Order Restricted Inference}. John Wiley \& Sons. New York.

\bibitem{singpurwallawong:83} \textsc{Singpurwalla, N.D.} and \textsc{Wong, M.Y.} (1983). Estimation of the failure rate - a survey of nonparametric methods, Part 1: Non-Bayesian methods. \textit{Communications in Statistics} \textbf{12} 559--588.

\bibitem{tantwood:94} \textsc{Tantiyaswasdikul, C.} and \textsc{Woodroofe, M.B.}
(1994). Isotonic smoothing splines under sequential designs.  \textit{Journal of Statistical Planning and Inference} \textbf{38} 75--88.

\bibitem{woodroofe_sun:93}
\textsc{Woodroofe, M.} and \textsc{Sun, J.} (1993). A penalized likelihood estimate of $f(0+)$
when $f$ is nonincreasing. \textit{Statistica Sinica} \textbf{3} 501--515.

\bibitem{woodroofe_sun:99}
\textsc{Woodroofe, M.}  and \textsc{Sun, J.}
(1999). Testing uniformity versus a monotone density. \textit{The Annals of Statistics} \textbf{27} 338--360.

\end{thebibliography}
\end{document}